\DeclareMathOperator{\arcsinh}{arcsinh}
\newcommand{\bel}[1]{\begin{equation}\label{#1}}
\newcommand{\be}{\begin{equation}}
\newcommand{\ba}{\begin{eqnarray}}
\newcommand{\ea}{\end{eqnarray}}
\newcommand{\qe}{\end{equation}}
\newcommand{\R}{{\mathds{R}}}
\newcommand{\wt}{\widetilde}
\newcommand{\MT}{\mathcal{M}_{\mathcal{T}}}
\newcommand{\vol}{\mathrm{vol}}
\newcommand{\br}{{\pmb{r}}}
\newcommand{\EE}{\mathds{E}}
\newcommand{\HH}{\mathds{H}}
\newcommand{\Hmm}[1]{\leavevmode{\marginpar{\tiny%
$\hbox to 0mm{\hspace*{-0.5mm}$\leftarrow$\hss}%
\vcenter{\vrule depth 0.1mm height 0.1mm width \the\marginparwidth}%
\hbox to
0mm{\hss$\rightarrow$\hspace*{-0.5mm}}$\\\relax\raggedright #1}}}
\newtheorem{theorem}{Theorem}[section]
\newtheorem{lemma}[theorem]{Lemma}
\newtheorem{corollary}[theorem]{Corollary}
\newtheorem{definition}[theorem]{Definition}
\newtheorem{remark}[theorem]{Remark}
\newtheorem{proposition}[theorem]{Proposition}
\newtheorem{problem}[theorem]{Problem}
\begin{document}

\title[3-dimensional hyperbolic combinatorial Yamabe flow]{3-dimensional combinatorial Yamabe flow in hyperbolic background geometry}

\author{Huabin Ge}
\email{hbge@bjtu.edu.cn}
\address{Huabin Ge: Department of Mathematics, Beijing Jiaotong University, Beijing 100044, P.R. China}

\author{Bobo Hua}
\email{bobohua@fudan.edu.cn}
\address{Bobo Hua: School of Mathematical Sciences, LMNS,
Fudan University, Shanghai 200433, China; Shanghai Center for
Mathematical Sciences, Fudan University, Shanghai 200433,
China.}

\begin{abstract}
We study the 3-dimensional combinatorial Yamabe flow in hyperbolic background geometry. For a triangulation of a 3-manifold, we prove that if the number of tetrahedra incident to each vertex is at least 23, then there exist real or virtual ball packings with vanishing (extended) combinatorial scalar curvature, i.e. the (extended) solid angle at each vertex is equal to $4\pi$. In this case, if such a ball packing is real, then the (extended) combinatorial Yamabe flow converges exponentially fast to that ball packing. Moreover, we prove that there is no real or virtual ball packing with vanishing (extended) combinatorial scaler curvature if the number of tetrahedra incident to each vertex is at most 22.

  \bigskip


 

 \end{abstract}

\maketitle
\tableofcontents



\par
\maketitle

\bigskip





\section{Introduction}\label{sec:intro}


In the seminar work \cite{T1}, Thurston introduced (2-dimensional) circle packings to construct hyperbolic 3-manifolds or 3-orbifolds. To extend Thurston's circle packings to higher dimension, Cooper and Rivin \cite{CR} studied the deformation of ball packings, which are three dimensional analogs of circle packings. Inspired by the pioneering work of Chow and Luo \cite{CL1}, where the combinatorial surface Ricci flows based on circle packings were introduced, Glickenstein \cite{G1,G2} introduced a combinatorial version of Yamabe flow based on Euclidean triangulations defined by ball packings (Luo \cite{L1} also introduced a combinatorial Yamabe flow on surfaces). The first author of the paper, Jiang and Shen \cite{GJS} studied the convergence of Glickenstein's Yamabe flow for regular ball packings in Euclidean background geometry. In this paper, we introduce the combinatorial Yamabe flow for triangulations of 3-manifolds in hyperbolic background geometry and study the convergence of the flow.

Let $M$ be a closed 3-manifold with a triangulation $\mathcal{T}=\{\mathcal{T}_0,\mathcal{T}_1,\mathcal{T}_2,\mathcal{T}_3\}$, where the symbols $\mathcal{T}_0,\mathcal{T}_1,\mathcal{T}_2,\mathcal{T}_3$ represent the sets of vertices, edges, faces and tetrahedra respectively. The pair $(M, \mathcal{T})$ refers to a 3-manifold with a triangulation $\mathcal{T}.$ For simplicity, we write $\mathcal{T}_0=\{1,\cdots,N\}$, where $N$ is the number of vertices, and denote by $\{ij\}$ ($\{ijk\}$ and $\{ijkl\}$ resp.) an edge (a triangle and a tetrahedron resp.) in the triangulation.  A tetrahedron is called incident to a vertex if the latter is a vertex of the former.  Let $\mathds{R}^3$ ($\mathds{H}^3$ resp.) be the simply-connected 3-dimensional Euclidean space (hyperbolic space of constant sectional curvature $-1$ resp.). For a triangulated 3-manifold $(M,\mathcal{T})$, Cooper and Rivin \cite{CR} constructed a piecewise linear metric (hyperbolic metric resp.) by ball packings: assign a positive number $r_i$ to each vertex $i$ which serves as the radius of a  ball in $\R^3$ ($\mathds{H}^3$ resp.) centered at $i,$ construct a Euclidean (hyperbolic resp.) tetrahedron in $\R^3$ ($\mathds{H}^3$ resp.) of edge length $l_{ij}=r_{i}+r_{j}$ for each tetrahedron $\{ijkl\}\in \mathcal{T}_3$ if it exits, and glue them together along common faces. A Euclidean or hyperbolic ball packing, also called sphere packing, can be completely described as a map $$\br=(r_1,\cdots,r_N):\mathcal{T}_0\rightarrow \R_+:=(0,+\infty).$$ In the following, we always write bold symbols for vectors in Euclidean space such as $\br$ above. Geometrically, a Euclidean (hyperbolic resp.) ball packing $\br$ attaches to each vertex $i\in\mathcal{T}_0$ a Euclidean (hyperbolic resp.) ball $B_i$ of radius $r_i$ centered at $i$ such that two balls $B_i$ and $B_j$ are externally tangent for any $\{ij\}\in \mathcal{T}_1$. Note that for each $\{ijkl\}\in \mathcal{T}_3,$ the assignment $(r_i,r_j,r_k,r_l)$ can form a non-degenerate Euclidean tetrahedron if and only if, see e.g. \cite{G1},
\begin{equation*}\label{nondegeneracy condition}
Q^\EE(r_i,r_j,r_k,r_l):=\left(\frac{1}{r_{i}}+\frac{1}{r_{j}}+\frac{1}{r_{k}}+\frac{1}{r_{l}}\right)^2-
2\left(\frac{1}{r_{i}^2}+\frac{1}{r_{j}^2}+\frac{1}{r_{k}^2}+\frac{1}{r_{l}^2}\right)>0.
\end{equation*}
The case $Q^\EE(r_i,r_j,r_k,r_l)=0$ corresponds to Descartes' circle theorem, also called Soddy-Gossett theorem, see \cite{CR}. Analogously, $(r_i,r_j,r_k,r_l)$ can form a non-degenerate hyperbolic tetrahedron if and only if, see e.g. \cite{Lag-MW,Mau},
\begin{eqnarray}\label{hyper-tetra}&&\\
Q^{\mathds{H}}(r_i,r_j,r_k,r_l)&:=& \left(\coth{r_i}+\coth{r_j}+\coth{r_k}+\coth{r_l}\right)^2-\nonumber\\&&
2\left(\coth{r_i}^2+\coth{r_j}^2+\coth{r_k}^2+\coth{r_l}^2\right)+4>0.\nonumber
\end{eqnarray} A Euclidean (hyperbolic resp.) ball packing $\br$ is called \emph{real} if all the Euclidean (hyperbolic resp.) tetrahedra are non-degenerate, and called \emph{virtual} otherwise. Cooper and Rivin \cite{CR} introduced the set of ball packings of a triangulation as a discrete analogue to the conformal class of a Riemannian metric on a manifold.

In this paper, we consider hyperbolic ball packings. In the following, we always refer to the hyperbolic background geometry, and for the sake of simplicity call a hyperbolic ball packing a ball packing, etc..  For a triangulated 3-manifold $(M,\mathcal{T}),$ we denote by $\mathcal{M}_{\mathcal{T}}$ the set of real ball packings. $\MT$ is a simply connected open subset of $\mathds{R}^N_{+}$, which was first obtained by Cooper and Rivin \cite{CR} (or see the first paragraph of Section \ref{subsec:g-flow}). The combinatorial scalar curvature on a real ball packing was introduced by Cooper and Rivin \cite{CR}. For $\br\in \MT,$ we denote by $\alpha_{ijkl}$ the solid angle at the vertex $i$ in the hyperbolic tetrahedron $\{ijkl\}\in \mathcal{T}_3.$ The combinatorial scalar curvature at the vertex $i$ is defined as
\begin{equation}\label{Def-CR curvature}
K_{i}:= 4\pi-\sum_{\{ijkl\}\in \mathcal{T}_3}\alpha_{ijkl},
\end{equation}
where the summation is taken over all tetrahedra incident to $i.$ It is remarkable that
the combinatorial curvature map
\begin{eqnarray*}\pmb{ K}:&&\mathcal{M}_{\mathcal{T}}\to \mathds{R}^N,\\ &&\br\mapsto \pmb{K}(\br)=(K_1,K_2,\cdots, K_N)\end{eqnarray*} is locally injective by Cooper and Rivin \cite{CR} and globally injective by Xu \cite{Xu}, which generalize Thurston's rigidity about surface circle packings \cite{T1}.

In the Euclidean setting, Glickenstein \cite{G1} proposed to study the combinatorial Yamabe problem, i.e. the existence of constant combinatorial scalar curvature in the conformal class $\MT.$
To approach the problem, he introduced a combinatorial Yamabe flow
\begin{equation}\label{Def-Flow-Glickenstein}
\frac{dr_i}{dt}=-K_ir_i, \quad \forall i\in \mathcal{T}_0,
\end{equation}
aiming to deform the initial ball packing to the one with constant combinatorial scalar curvature. The prototype of (\ref{Def-Flow-Glickenstein}) is Chow and Luo's combinatorial Ricci flow \cite{CL1} and Luo's combinatorial Yamabe flow \cite{L1} on surfaces, see also \cite{Ge}-\cite{GZX}.

Inspired by Glickenstein's problem, we propose the following combinatorial Yamabe problem in hyperbolic setting.
\begin{problem}[Hyperbolic combinatorial Yamabe problem]\label{prob:1} Given a triangulated 3-manifold $(M,\mathcal{T}),$ does there exist a hyperbolic ball packing with vanishing combinatorial scaler curvature? \end{problem}

To solve the problem, one possible approach, initiated in the geometric analysis, is to use the variational method. We introduce the Cooper-Rivin's functional $\mathcal{S}$ on real ball packings, \begin{eqnarray}\label{eq:cprivinf}{\mathcal{S}}(\br)=\sum_{i\in \mathcal{T}_0}{K}_i r_i-2\vol_M(\br),\forall \ \br\in \MT,
\end{eqnarray} where $\vol_M(\br):=\sum_{\{ijkl\}\in \mathcal{T}_3}\vol_{ijkl}(\br)$ denotes the total volume of the triangulation in ball packing metric $\br.$ The hyperbolic combinatorial Yamabe invariant of $(M,\mathcal{T})$ is defined as
$$Y_\mathcal{T}:=\inf_{\br\in \MT} {\mathcal{S}}(\br).$$ We prove that the solutions to hyperbolic combinatorial Yamabe problem are exactly the critical points of the functional ${\mathcal{S}}$.
\begin{theorem}\label{thm:Yamabeequivalent} The following are equivalent:
\begin{enumerate}
\item There exists a real ball packing $\overline{\br}$ such that $\pmb{K}(\overline{\br})=0,$ i.e. the combinatorial Yamabe problem is solvable.
\item The functional $\mathcal{S}$ has a critical point in $\MT.$
\item The hyperbolic combinatorial Yamabe invariant $Y_\mathcal{T}$ is attained by a real ball packing, i.e. the functional $\mathcal{S}$ has a global minimizer.
\end{enumerate}
\end{theorem}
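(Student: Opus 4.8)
The plan rests on the identity $\nabla_{\br}\mathcal{S}=\pmb{K}$ on $\MT$, i.e. $\partial\mathcal{S}/\partial r_i=K_i$ for every $i\in\mathcal{T}_0$. To prove it I would localize over tetrahedra: since $K_i=4\pi-\sum_{\{ijkl\}\ni i}\alpha_{ijkl}$ and $\vol_M=\sum_{\sigma\in\mathcal{T}_3}\vol_\sigma$, one can write $\mathcal{S}(\br)=4\pi\sum_{i}r_i-\sum_{\sigma}H_\sigma(\br)$ with $H_\sigma:=\sum_{a\in\sigma}r_a\,\alpha_{a,\sigma}+2\vol_\sigma$, so it is enough to check $\partial H_\sigma/\partial r_a=\alpha_{a,\sigma}$ for each tetrahedron $\sigma$ and vertex $a\in\sigma$, equivalently $\sum_{b\in\sigma}r_b\,\partial_{r_a}\alpha_{b,\sigma}=-2\,\partial_{r_a}\vol_\sigma$. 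This I would derive by combining two classical facts: the spherical Gauss--Bonnet relation $\alpha_{a,\sigma}=\sum_{b\in\sigma\setminus\{a\}}\theta^{\sigma}_{ab}-\pi$ between the solid angle at $a$ and the three dihedral angles $\theta^{\sigma}_{ab}$ along the edges at $a$, and the Schl\"afli differential formula $2\,d\vol_\sigma=-\sum_{\{ab\}\subset\sigma}(r_a+r_b)\,d\theta^{\sigma}_{ab}$ (using $\ell_{ab}=r_a+r_b$); substituting Gauss--Bonnet into $\sum_{b}r_b\,\alpha_{b,\sigma}$ and differentiating in $r_a$, the terms involving $\partial_{r_a}\ell$ cancel against $\alpha_{a,\sigma}$ and Schl\"afli supplies the remainder. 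In particular $\mathcal{S}\in C^{\infty}(\MT)$ with $\nabla\mathcal{S}=\pmb{K}$, so the critical points of $\mathcal{S}$ in $\MT$ are exactly the real ball packings with $\pmb{K}=0$: this is $(1)\Leftrightarrow(2)$.

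The implication $(3)\Rightarrow(2)$ is immediate: $\MT$ is open, so a global minimizer of $\mathcal{S}$ in $\MT$ is an interior critical point. The content is $(2)\Rightarrow(3)$. Suppose $\mathcal{S}$ has a critical point $\overline{\br}$; by the global injectivity of $\pmb{K}=\nabla\mathcal{S}$ (Cooper--Rivin \cite{CR}, Xu \cite{Xu}) it is the unique one, so it suffices to show $\mathcal{S}(\overline{\br})=\inf_{\MT}\mathcal{S}$, whence $Y_\mathcal{T}$ is attained at $\overline{\br}\in\MT$. The route is convexity. On a single hyperbolic tetrahedron the $4\times4$ matrix $\bigl(-\partial\alpha_{a,\sigma}/\partial r_b\bigr)_{a,b\in\sigma}$ should be positive semi-definite --- equivalently, after the change of variables $u_i:=\log\tanh(r_i/2)$ (so $\partial r_i/\partial u_i=\sinh r_i$ and $\partial\mathcal{S}/\partial u_i=K_i\sinh r_i$), the corresponding $4\times4$ block of $\mathrm{Hess}_{\pmb{u}}\mathcal{S}$ is positive semi-definite; this is a Cooper--Rivin--type local computation, and summing over $\mathcal{T}_3$ makes $\mathcal{S}$ convex in the $\pmb{u}$-coordinates. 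Since $\MT$ (hence its image in $\pmb{u}$-space) need not be convex, I would then pass to the extended combinatorial curvature $\widetilde{\pmb{K}}$ and the extended Cooper--Rivin functional $\widetilde{\mathcal{S}}$ on the domain $\widetilde{\mathcal{M}}\supset\MT$ of real \emph{and} virtual ball packings, arranged to be a convex set on which $\widetilde{\mathcal{S}}$ is $C^{1}$, convex, and equal to $\mathcal{S}$ on $\MT$ with $\nabla\widetilde{\mathcal{S}}=\widetilde{\pmb{K}}$. Then $\overline{\br}$ is an interior critical point of the convex function $\widetilde{\mathcal{S}}$ on the convex set $\widetilde{\mathcal{M}}$, hence a global minimizer, so $Y_\mathcal{T}=\inf_{\MT}\mathcal{S}\ge\inf_{\widetilde{\mathcal{M}}}\widetilde{\mathcal{S}}=\widetilde{\mathcal{S}}(\overline{\br})=\mathcal{S}(\overline{\br})\ge Y_\mathcal{T}$, forcing $\mathcal{S}(\overline{\br})=Y_\mathcal{T}$.

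I expect the main obstacle to be exactly the two ingredients of the last paragraph: (a) establishing the sign of the tetrahedral Hessian block for \emph{all} hyperbolic ball-packing tetrahedra, not just well-proportioned ones; and (b) constructing a genuinely convex $C^{1}$ extension $\widetilde{\mathcal{S}}$ over a convex domain of virtual packings together with the compatibility $\nabla\widetilde{\mathcal{S}}=\widetilde{\pmb{K}}$ and $\widetilde{\mathcal{S}}|_{\MT}=\mathcal{S}$ --- that is, controlling solid angles and volume as a ball packing degenerates across $\partial\MT$. The gradient identity, $(1)\Leftrightarrow(2)$ and $(3)\Rightarrow(2)$ are soft; the equivalence with the attained minimum is where the extension machinery (developed later in the paper, and where the ``at least $23$ tetrahedra per vertex'' hypothesis enters the quantitative existence results rather than this qualitative statement) does the real work.
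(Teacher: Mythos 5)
Your plan is correct and is essentially the paper's own argument: $(1)\Leftrightarrow(2)$ via the Schl\"afli-formula identity $\nabla\mathcal{S}=\pmb{K}$, $(3)\Rightarrow(2)$ being trivial, and the remaining implication obtained by viewing the critical point as an interior critical point of the $C^1$-smooth convex extension $\wt{\mathcal{S}}$ on the convex domain $\R^N_+$ of all real and virtual packings (built tetrahedron-by-tetrahedron from the concavity of the Cooper--Rivin functional $\mathcal{U}$ together with Luo's convex $C^1$-extension lemma), hence a global minimizer whose value equals $Y_{\mathcal{T}}$. The two ``obstacles'' you flag are exactly the ingredients the paper supplies (Rivin's definiteness lemma for $\partial\pmb{\alpha}/\partial\br$ and the continuity of the extended solid angles feeding into Luo's lemma), so no genuinely different route is involved.
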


In the paper, we mainly follow Glickenstein's parabolic method to attack the combinatorial Yamabe problem. We introduce the hyperbolic combinatorial Yamabe flow as follows. We say that $\br(t)$ is a solution to the hyperbolic combinatorial Yamabe flow with the initial data $\br(0),$ if for all $t\in [0,T),$ $\br(t)\in \MT$ and
\begin{equation}\label{hyperyflow}
\frac{dr_i}{dt}=-K_i\sinh{r_i},\quad \forall i\in \mathcal{T}_0.
\end{equation} The stationary points of the flow are given by real packings $\overline{\br}$ with $\pmb{K}(\overline{\br})=0.$
This makes good sense to use the flow \eqref{hyperyflow} to find ball packings with vanishing combinatorial scalar curvature. Since the right hand side of \eqref{hyperyflow} is locally Lipschitz in $\MT,$ by the ODE theory the solution exists and is unique in the maximal existence time $T\leq \infty$ for any initial data $\br(0)\in \MT.$ We say that the flow \eqref{hyperyflow} converges if the solution $\br(t)$ exists for $[0,\infty)$ and there is $\overline{\br}\in \MT$ such that
\begin{equation}\label{eq:converges1}\br(t)\to \overline{\br},\quad t\to\infty.\end{equation} It is well-known that if the flow \eqref{hyperyflow} converges to $\overline{\br},$ then $$\pmb{K}(\overline{\br})=0,$$ see Proposition~\ref{prop-converg-imply-const-exist} below. However, the convergence of the flow \eqref{hyperyflow} is in general difficult to obtain. We have the following scenarios:
\begin{enumerate}[(i)]
\item The flow \eqref{hyperyflow} may develop finite time singularity, i.e. the maximal existence time $T<\infty.$
\item A triangulated 3-manifold $(M,\mathcal{T})$ may not admit a real ball packing of vanishing combinatorial scaler curvature, see Theorem~\ref{thm:less22} below, which implies that the flow never converges for any initial data.
\end{enumerate}

In order to circumvent the difficulties, we introduce the extended combinatorial Yamabe flow. First, for any tetrahedron $\{ijkl\}\in \mathcal{T}_3$ we may continuously extend solid angles $\alpha_{ijkl}$ for real ball packings to $\wt{\alpha}_{ijkl}$ for all ball packings including virtual ones. This induces the definition of extended combinatorial scaler curvature for any ball packing,
\begin{equation}\label{extended curvature}
\wt{K}_{i}:= 4\pi-\sum_{\{ijkl\}\in \mathcal{T}_3}\wt{\alpha}_{ijkl}.
\end{equation}
Then the extended combinatorial Yamabe flow is defined as follows: $\br(t)\in \R^N_+$ for $t\in [0,T)$ and
 \begin{equation}\label{extflow}
\frac{dr_i}{dt}=-\wt{K}_i\sinh{r_i},\quad \forall i\in \mathcal{T}_0.
\end{equation}
We prove the long time existence of the extended combinatorial Yamabe flow, i.e. the maximal existence time $T=\infty,$ in Theorem~\ref{thm-yang-write}, and the uniqueness of the solution to the flow in Theorem~\ref{thm:uniqueness}.
We say that the flow \eqref{extflow} converges to $\overline{\br}\in \R^N_+$ if $$\br(t)\to \overline{\br},\quad t\to\infty.$$

With the help of the extended combinatorial Yamabe flow and hyperbolic geometry, we prove the existence result of hyperbolic Yamabe problem in the extended sense under some combinatorial condition. For any vertex $i,$ we denote by $d_i$ the number of tetrahedra incident to $i$ and call it the tetra-degree of $i.$
\begin{theorem}\label{thm:23degree} Let $(M,\mathcal{T})$ be a triangulated 3-manifold satisfying
$$d_i\geq 23,\quad \forall i\in \mathcal{T}_0.$$ Then there exists a ball packing $\br,$ real or virtual, such that \begin{equation}\label{eq:eq0-1}\pmb{\wt{K}}(\br)=0.\end{equation} If there is a real ball packing $\br$ satisfying \eqref{eq:eq0-1}, then the extended flow \eqref{extflow} converges exponentially fast to $\br$ for any initial data $\br(0)\in \R^N_+.$
\end{theorem}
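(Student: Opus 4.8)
The plan is to use the extended flow \eqref{extflow} as a dynamical system whose only possible misbehaviour is the escape of some radius to $0$ or to $\infty$, and to rule out both escapes using the hypothesis $d_i\ge 23$.

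\textbf{Step 1: two barrier estimates for $\wt K_i$.} Everything rests on the sign of $\wt K_i$ near the two ends of $\R_+$. At the large end, as $r_i\to\infty$ every edge at the vertex $i$ has length $\ge r_i$, so $i$ approaches an ideal vertex of each incident tetrahedron and the extended solid angles $\wt\alpha_{ijkl}$ at $i$ tend to $0$ \emph{uniformly} in the remaining radii; hence $\wt K_i=4\pi-\sum_{\{ijkl\}}\wt\alpha_{ijkl}\to 4\pi>0$, and $\dot r_i=-\wt K_i\sinh r_i<0$ once $r_i$ is large. Here the negative curvature of $\HH^3$ is used in an essential way, and this is exactly the ingredient that is unavailable in the Euclidean problem \cite{G1,GJS}. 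At the small end, the claim is that $\wt K_i<0$ whenever the vertex $i$ realizes the smallest radius, this radius is small enough, and all radii are bounded above. This is where $d_i\ge 23$ enters: as the smallest ball collapses, the extended solid angles at its vertex add up to at least $d_i\bigl(\arccos\tfrac{23}{27}-o(1)\bigr)$, with $\arccos\tfrac{23}{27}\approx0.5513$ the vertex solid angle of a regular Euclidean tetrahedron, so that $\wt K_i\le 4\pi-d_i\arccos\tfrac{23}{27}+o(1)$, which is negative exactly because $23\arccos\tfrac{23}{27}>4\pi>22\arccos\tfrac{23}{27}$. Establishing this estimate — controlling the continuous extension of the solid angles across the degeneracy walls $\{Q^{\HH}=0\}$, uniformly as the neighbours of the collapsing vertex range over a bounded set and possibly collapse simultaneously — is the technical core of the theorem and its main obstacle; the threshold $23$ is genuinely decided here, and Theorem~\ref{thm:less22} shows that $22$ does not suffice.

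\textbf{Step 2: confinement and existence of a zero of $\pmb{\wt K}$.} Fix $\br(0)\in\R^N_+$; by Theorems~\ref{thm-yang-write} and \ref{thm:uniqueness} the flow \eqref{extflow} has a unique solution on $[0,\infty)$. The large-end barrier forces $r_i(t)\le\max\{r_i(0),R^*\}$ with $R^*$ depending only on $\max_i d_i$, so all radii stay bounded above along the flow. Applying the small-end barrier to $m(t):=\min_i r_i(t)$: whenever $m(t)\le\delta$ the vertex attaining the minimum has $\wt K_i<0$, so $m$ is increasing there; hence $m(t)\ge\min\{m(0),\delta\}>0$ for all $t$, and the orbit stays in a compact set $\mathcal K\subset\R^N_+$. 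Along \eqref{extflow} the extended Cooper--Rivin energy $\wt{\mathcal S}$ — the $C^1$ convex extension of \eqref{eq:cprivinf}, with $\nabla_{\br}\wt{\mathcal S}=\pmb{\wt K}$ — satisfies $\tfrac{d}{dt}\wt{\mathcal S}=-\sum_i\wt K_i^2\sinh r_i\le 0$, with equality only at zeros of $\pmb{\wt K}$; by LaSalle's invariance principle the nonempty $\omega$-limit set of the orbit lies in $\{\pmb{\wt K}=0\}\cap\mathcal K$. In particular $\pmb{\wt K}$ vanishes at some $\br\in\R^N_+$, which is the required ball packing, real or virtual; this proves \eqref{eq:eq0-1}. (Equivalently, the same two barriers prevent a minimising sequence of $\wt{\mathcal S}$ from escaping, so the extended Yamabe invariant is attained at a critical point.)

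\textbf{Step 3: convergence and exponential rate when a real solution exists.} Suppose $\overline{\br}\in\MT$ is real with $\pmb{\wt K}(\overline{\br})=\pmb K(\overline{\br})=0$. Since $\overline{\br}$ lies in the real locus, the Jacobian $L:=D\pmb K(\overline{\br})=\mathrm{Hess}_{\br}\wt{\mathcal S}(\overline{\br})$ is symmetric positive definite by Cooper--Rivin's local rigidity \cite{CR}, so $\overline{\br}$ is a nondegenerate, hence isolated, zero of $\pmb{\wt K}$. Convexity of $\wt{\mathcal S}$ (equivalently, monotonicity of $\pmb{\wt K}$) forces it to be the only zero: if also $\pmb{\wt K}(\br^*)=0$, then along the segment $\br(s)$ from $\overline{\br}$ to $\br^*$ the nondecreasing function $s\mapsto\pmb{\wt K}(\br(s))\cdot(\br^*-\overline{\br})$ vanishes at both endpoints, hence is identically zero, which forces $(\br^*-\overline{\br})^\top L\,(\br^*-\overline{\br})=0$ and so $\br^*=\overline{\br}$ (consistent with Xu's global rigidity \cite{Xu}). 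Hence for an arbitrary initial datum the orbit is confined by Steps 1--2, and its $\omega$-limit set, a nonempty connected subset of $\{\pmb{\wt K}=0\}=\{\overline{\br}\}$, equals $\{\overline{\br}\}$; so $\br(t)\to\overline{\br}$, and by Proposition~\ref{prop-converg-imply-const-exist} the limit indeed has vanishing extended curvature, as it must. Finally, linearising \eqref{extflow} at $\overline{\br}$ gives $\dot{\pmb\epsilon}=-\Sigma L\,\pmb\epsilon$ with $\Sigma=\mathrm{diag}(\sinh\overline{r}_1,\dots,\sinh\overline{r}_N)\succ 0$; since $\Sigma L$ is similar to the positive-definite symmetric matrix $\Sigma^{1/2}L\,\Sigma^{1/2}$, the matrix $-\Sigma L$ is Hurwitz, $\overline{\br}$ is an exponentially stable equilibrium, and the convergence $\br(t)\to\overline{\br}$ is exponentially fast.
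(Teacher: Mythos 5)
Your proposal is correct and follows essentially the same route as the paper: the two barrier estimates you assert in Step~1 are precisely the paper's Lemma~\ref{lem:smallsolid}/Theorem~\ref{thm:upperest} (solid angles at a vertex decay once its radius is large, the purely hyperbolic ingredient) and Theorem~\ref{thm:compare} combined with Proposition~\ref{prop:regulartetra} (the minimal vertex's extended solid angle is at least that of the regular tetrahedron $r_i\pmb{1}$, which tends to $3\arccos(1/3)-\pi=\arccos(23/27)$ as $r_i\to 0$), and together they give exactly the confinement of Theorems~\ref{thm:lowerest} and \ref{thm:upperest}. Your Steps~2--3 --- monotonicity of $\wt{\mathcal{S}}$ plus compactness to produce a zero of $\pmb{\wt K}$, uniqueness via convexity as in Theorem~\ref{thm:alternative}, and exponential convergence from the linearization $-\Sigma L$ being similar to the negative definite $-\Sigma^{1/2}L\Sigma^{1/2}$ --- coincide with the paper's argument, the only cosmetic difference being that you invoke the connectedness of the $\omega$-limit set where the paper extracts a convergent subsequence and then applies the asymptotic-stability lemma of Pontryagin.
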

To prove the existence of ball packings with vanishing extended combinatorial scaler curvature, we consider the solution $\br(t)$ to extended flow \eqref{extflow}. If the tetra-degree at each vertex is at least 23, then we obtain uniform lower and upper estimates of the solution $\br(t)$ in Theorem~\ref{thm:lowerest} and Theorem~\ref{thm:upperest}. This yields that there is a sequence $t_n\to \infty$ such that $$\pmb{\wt{K}}(\br(t_n))\to \pmb{0}.$$ By further extracting a subsequence, we obtain a limiting ball packing $\br$ with vanishing extended combinatorial scaler curvature.
\begin{remark}
\begin{enumerate}
\item If there is a real ball packing with vanishing combinatorial scaler curvature, then there is no virtual ones with the same property. In this case, the real ball packing with vanishing combinatorial scaler curvature is unique, see Theorem~\ref{thm:alternative}.

 \item We will prove that there is no real or virtual ball packing with vanishing extended combinatorial scaler curvature if the tetra-degree at each vertex is bounded above by 22, see Theorem~\ref{thm:less22}. This indicates the tetra-degree assumption is essential.
 \end{enumerate}
\end{remark}

In the next theorem, we prove the non-existence result of hyperbolic Yamabe problem in the extended sense if the tetra-degrees at vertices are small.
\begin{theorem}\label{thm:less22} Let $(M,\mathcal{T})$ be a triangulated 3-manifold satisfying
$$d_i\leq 22,\quad \forall i\in \mathcal{T}_0.$$ There exists no ball packing $\br,$ real or virtual, such that \begin{equation}\label{eq:eq0-10}\pmb{\wt{K}}(\br)=0.\end{equation} Moreover, for any initial data $\br(0)\in \R^N_+,$ the solution $\br(t)$ of extended combinatorial Yamabe flow \eqref{extflow} satisfies
 $$\br(t)\to \pmb 0,\quad t\to \infty.$$
\end{theorem}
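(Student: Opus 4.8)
The plan is to deduce both assertions from a single inequality for the extended solid angle at the \emph{largest} ball of a tetrahedron, which I will call the key estimate: for every ball packing $\br\in\R^N_+$ (real or virtual) and every tetrahedron $\{ijkl\}\in\mathcal{T}_3$ with $r_i\ge\max\{r_j,r_k,r_l\}$,
\[
\wt\alpha_{ijkl}(\br)\ \le\ \alpha_*:=3\arccos\tfrac{1}{3}-\pi ,
\]
the solid angle at a vertex of a regular Euclidean tetrahedron. Assuming this, the non-existence part is immediate: if $\pmb{\wt K}(\br)=\pmb 0$, pick a vertex $i$ with $r_i=\max_{m\in\mathcal{T}_0}r_m$; then $r_i\ge\max\{r_j,r_k,r_l\}$ for every tetrahedron $\{ijkl\}$ incident to $i$, so
\[
\wt K_i=4\pi-\sum_{\{ijkl\}\in\mathcal{T}_3}\wt\alpha_{ijkl}\ \ge\ 4\pi-d_i\alpha_*\ \ge\ 4\pi-22\alpha_*\ =\ 26\pi-66\arccos\tfrac{1}{3}\ >\ 0 ,
\]
contradicting $\wt K_i=0$. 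The positivity $26\pi-66\arccos\frac{1}{3}>0$ is equivalent to $\arccos\frac{1}{3}<\frac{13\pi}{33}$, which holds; note also that $27\pi-69\arccos\frac{1}{3}<0$, so $d_i\le 22$ is precisely the regime this argument handles, in agreement with the threshold $23$ of Theorem~\ref{thm:23degree}. Put $c_0:=26\pi-66\arccos\frac{1}{3}>0$.

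For the convergence of the flow, let $\br(t)$, $t\in[0,\infty)$, be the global solution of \eqref{extflow} guaranteed by Theorem~\ref{thm-yang-write} and set $\varrho(t):=\max_{m\in\mathcal{T}_0}r_m(t)>0$. Applying the displayed estimate above at each time $t$ to any vertex $i$ realizing the maximum gives $\wt K_i(\br(t))\ge c_0$, hence $\frac{dr_i}{dt}=-\wt K_i\sinh r_i\le-c_0\sinh\varrho(t)$; since $\varrho$ is locally Lipschitz and its upper Dini derivative is bounded above by the largest of these right-hand sides over the maximizing vertices, $D^+\varrho(t)\le-c_0\sinh\varrho(t)$. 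Comparing with the scalar ODE $\dot u=-c_0\sinh u$, whose solution satisfies $\tanh\frac{u(t)}{2}=\tanh\frac{u(0)}{2}\,e^{-c_0 t}$, we obtain $0<r_m(t)\le\varrho(t)\le u(t)\to0$; hence $\br(t)\to\pmb 0$, exponentially fast since $\sinh u\sim u$ near $0$.

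It remains to prove the key estimate. Suppose first that $\{ijkl\}$ is a non-degenerate hyperbolic tetrahedron. The solid angle at $i$ equals the area of the spherical link triangle at $i$, whose side lengths are the three face angles $\gamma_1,\gamma_2,\gamma_3$ (the angles at $i$ in the hyperbolic triangles $\{ijk\},\{ijl\},\{ikl\}$). The hyperbolic law of cosines gives
\[
\cos\gamma_1=\frac{\cosh(r_i+r_j)\cosh(r_i+r_k)-\cosh(r_j+r_k)}{\sinh(r_i+r_j)\sinh(r_i+r_k)},
\]
and a product-to-sum manipulation turns $\cos\gamma_1\ge\frac12$ into
\[
\cosh(2r_i+r_j+r_k)+3\cosh(r_j-r_k)-4\cosh(r_j+r_k)\ \ge\ 0 ;
\]
when $r_i\ge\max\{r_j,r_k\}$ one has $\cosh(2r_i+r_j+r_k)\ge\cosh\!\big(2(r_j+r_k)\big)=2\cosh^2(r_j+r_k)-1$, so the left side is at least $2\big(\cosh(r_j+r_k)-1\big)^2\ge0$. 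Hence all three face angles at $i$ are $\le\pi/3$. Writing $p=\cos\gamma_1,\ q=\cos\gamma_2,\ r=\cos\gamma_3\in[\tfrac{1}{2},1)$, the Van Oosterom--Strackee formula yields
\[
\tan\frac{\wt\alpha_{ijkl}}{2}=\frac{\sqrt{1-p^2-q^2-r^2+2pqr}}{1+p+q+r},
\]
so $\wt\alpha_{ijkl}\le\alpha_*$ reduces to $25(1-p^2-q^2-r^2+2pqr)\le2(1+p+q+r)^2$ on $[\tfrac{1}{2},1]^3$ — an elementary polynomial inequality, with equality only at $p=q=r=\tfrac{1}{2}$ (the symmetric point is the unique interior critical point, and the function vanishes or is negative on the relevant boundary faces). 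If $\{ijkl\}$ is virtual, then $B_i$ is never the ``inner'' ball of the degenerate Descartes configuration, so I would use that the extension assigns $\wt\alpha_{ijkl}=0$ at $i$ (consistent with a flat ball-packing tetrahedron subtending solid angle $0$ at each of its outer vertices along $Q^{\HH}=0$), and the estimate is trivial.

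The main obstacle is the key estimate, and inside it the sharp spherical-triangle inequality — that among spherical triangles all of whose sides are at most $\pi/3$ the equilateral one maximizes the area — together with the bookkeeping needed to carry the bound through the virtual locus via the precise definition of $\wt\alpha_{ijkl}$. The hyperbolic computation of the face angles, by contrast, is a short exercise.
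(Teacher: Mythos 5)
Your overall architecture coincides with the paper's: both reduce everything to the single estimate $\wt{\alpha}_{ijkl}(\br)\le \alpha_1^{\EE}(\pmb{1})=3\arccos\frac13-\pi$ whenever $r_i$ is the largest radius in the tetrahedron (the paper's Theorem~\ref{thm:secondcompare}), then apply it at a global maximum vertex to get $\wt K_i\ge 4\pi-22\,\alpha_1^{\EE}(\pmb{1})>0$ (Theorem~\ref{thm:positivecurvature}), which kills the Yamabe problem, and finally integrate the resulting differential inequality $r_M'\le-\epsilon_0\sinh r_M$ for the maximal radius exactly as in Theorem~\ref{thm:estimate22}. Where you genuinely diverge is in the proof of the key estimate itself. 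The paper's route is variational: it uses the concavity of the extended Cooper--Rivin functional $\wt{\mathcal{U}}$ on a single tetrahedron, the symmetry statement $r_i\le r_j\Leftrightarrow\wt{\alpha}_i\ge\wt{\alpha}_j$ of Lemma~\ref{lem:Glick1}, a weighted-average argument against the regular packing $\delta\pmb{1}$, and then the limit $\delta\to 0$ via Proposition~\ref{prop:regulartetra}. Your route is direct trigonometry: the hyperbolic law of cosines shows (correctly — I checked the product-to-sum computation and the bound $\cosh(2r_i+r_j+r_k)\ge 2\cosh^2(r_j+r_k)-1$) that all three face angles at the largest vertex are at most $\pi/3$, and then the solid angle, being the area of the spherical link triangle with those sides, is bounded by the area of the equilateral spherical triangle with sides $\pi/3$, which is exactly $\alpha_1^{\EE}(\pmb{1})$. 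The virtual case is handled correctly in both treatments via Proposition~\ref{lemma-Di-imply-ri-small} and the definition \eqref{eq:extsolid}. Your approach is more elementary and self-contained (no Luo-type $C^1$-extension of the functional, no limiting argument), and it explains geometrically why the Euclidean regular-tetrahedron angle is the right constant; its price is the sharp spherical inequality $25(1-p^2-q^2-r^2+2pqr)\le 2(1+p+q+r)^2$ on $[\frac12,1]^3$, which you assert rather than prove. That inequality is true, but your parenthetical justification is slightly off: the symmetric point $p=q=r=\frac12$ is a \emph{corner} of the cube, not an interior critical point — in fact the gradient system forces $p=q=r$ at any critical point and the resulting quadratic has no root in $[\frac12,1]$, so there are no interior critical points at all, and one finishes by checking the faces $p=\frac12$ and $p=1$ (where the expression equals $-25(q-r)^2-2(2+q+r)^2<0$). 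With that boundary analysis written out, your proof is complete and correct.
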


A triangulation $(M,\mathcal{T})$ is called tetra-regular if all tetra-degrees of vertices are equal. As a corollary of above results, we have complete results for tetra-regular triangulations.
\begin{corollary}\label{coro:regulartetra} For a tetra-regular triangulation of 3-manifold $(M,\mathcal{T})$ with tetra-degree $d,$ we have the following:
\begin{enumerate}
 \item For $d\geq 23,$ there exists a unique ball packing with vanishing combinatorial scaler curvature, and the solution to the extended flow \eqref{extflow} converges exponentially fast to that ball packing for any initial data.
 \item For $d\leq 22,$ there is no real or virtual ball packing with vanishing extended combinatorial scaler curvature, and the solution to the extended flow \eqref{extflow} converges to zero for any initial data.
 \end{enumerate}

\end{corollary}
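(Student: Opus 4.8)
I would read Corollary~\ref{coro:regulartetra} as a packaging of Theorems~\ref{thm:23degree} and~\ref{thm:less22} in the tetra-regular case $d_i\equiv d$, together with one short explicit computation and the uniqueness already recorded in the Remark after Theorem~\ref{thm:23degree} (and Theorem~\ref{thm:alternative}). Part (2) is immediate: tetra-regularity with $d\le 22$ is exactly the hypothesis $d_i\le 22$ for all $i$, so Theorem~\ref{thm:less22} gives both the non-existence of a real or virtual ball packing with $\pmb{\wt K}=0$ and the convergence $\br(t)\to\pmb 0$ of every solution of \eqref{extflow}.

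For Part (1), Theorem~\ref{thm:23degree} already hands us a ball packing with $\pmb{\wt K}=0$ and, \emph{provided it can be taken real}, exponential convergence of \eqref{extflow} to it for arbitrary initial data; so the only thing left is to produce a \emph{real} zero-curvature packing and to see it is unique. I would exhibit it by the regular ansatz $\br=r\pmb 1$. For every $r>0$ the four incident tetrahedra become congruent regular hyperbolic tetrahedra of edge length $2r$, which are non-degenerate since $Q^{\mathds H}(r,r,r,r)=8\coth^2 r+4>0$; hence $r\pmb 1\in\MT$ and $\wt K_i\equiv 4\pi-d\,f(r)$, where $f(r)$ is the solid angle at a vertex of the regular hyperbolic tetrahedron of edge $2r$. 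The map $f\colon(0,\infty)\to(0,\omega)$, with $\omega:=3\arccos\frac13-\pi$, is continuous and strictly decreasing, with $f(0^{+})=\omega$ (collapse to the regular Euclidean tetrahedron) and $f(r)\to 0$ as $r\to\infty$ (the regular hyperbolic tetrahedron of large edge converges to the regular ideal one, whose dihedral angles are $\pi/3$, so $f\to 3\cdot\frac\pi3-\pi=0$). Since $d\ge 23>4\pi/\omega$ we have $0<4\pi/d<\omega$, so there is a unique $r^{*}>0$ with $f(r^{*})=4\pi/d$, i.e. a genuine real ball packing $r^{*}\pmb 1$ with $\pmb{\wt K}(r^{*}\pmb 1)=\pmb 0$. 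Theorem~\ref{thm:23degree} then gives exponential convergence of \eqref{extflow} to a real zero-curvature packing, and global injectivity of $\pmb K$ on $\MT$ (Xu) together with the Remark after Theorem~\ref{thm:23degree} (no virtual zero-curvature packing can coexist with a real one; see Theorem~\ref{thm:alternative}) shows that $r^{*}\pmb 1$ is the unique ball packing, real or virtual, with vanishing (extended) combinatorial scalar curvature. This is Part (1).

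The only genuinely new ingredient inside the corollary is the one-variable analysis of $f$. Its strict monotonicity I would obtain from Schläfli's differential formula applied to the regular tetrahedron: with all six edge lengths equal to $\ell=2r$ and all dihedral angles equal to $\theta(r)$ one has $dV=-3\ell\,d\theta$, and since the volume $V(r)$ of the regular hyperbolic tetrahedron is strictly increasing in $r$ (tending to the finite volume of the regular ideal tetrahedron), $\theta'(r)<0$, hence $f=3\theta-\pi$ is strictly decreasing; the two limit values are elementary. I do not expect any real obstacle inside the proof of the corollary itself — the substance lies entirely upstream, in the uniform estimates behind Theorem~\ref{thm:23degree}, the curvature/solid-angle estimates behind Theorem~\ref{thm:less22}, and the rigidity used for uniqueness. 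If pressed to name the hardest point in the whole package it would be the solid-angle estimates for virtual packings: an individual extended solid angle can be as large as $2\pi$ (a nearly-flattened tetrahedron), so it is only a suitably weighted combination of the four corner angles of each tetrahedron that stays bounded by $\omega$, and the threshold $22<4\pi/\omega<23$ is precisely what separates the two regimes.
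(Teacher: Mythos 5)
Your proposal is correct and follows essentially the same route as the paper: part (2) is read off from Theorem~\ref{thm:less22}, and part (1) reduces to exhibiting a real zero-curvature packing via the regular ansatz $t\pmb{1}$, using the strict monotonicity and limits $\alpha_1^{\EE}(\pmb 1)$ and $0$ of the regular solid angle together with $23>4\pi/\alpha_1^{\EE}(\pmb 1)$, then invoking Theorem~\ref{thm:23degree} for convergence and Theorem~\ref{thm:alternative} for uniqueness. The only cosmetic difference is that you re-derive the monotonicity of the regular solid angle from the Schl\"afli formula, whereas the paper already records exactly this fact (with the explicit formula $\cos\beta_{12}(t\pmb 1)=\cosh(2t)/(1+2\cosh(2t))$) as Proposition~\ref{prop:regulartetra}, which you could simply cite.
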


The paper is organized as follows:
In the next section, we study the geometry of hyperbolic tetrahedra, the extension of solid angles to virtual tetrahedra, and some comparison principles for solid angles. In Section~\ref{s:ballpackings}, we study global properties of triangulations of 3-manifolds configured by ball packings, and prove Theorem~\ref{thm:Yamabeequivalent}, the long time existence and the uniqueness of the extended combinatorial Yamabe flow. In the last section, we prove the main results, Theorem~\ref{thm:23degree}, Theorem~\ref{thm:less22} and Corollary~\ref{coro:regulartetra}.
\section{Geometry of hyperbolic tetrahedra}
\subsection{Hyperbolic tetrahedra configured by four tangent balls}
\label{section-extend-K}
In this section, we consider the geometry of a hyperbolic tetrahedron. For any $\{v_1,v_2,v_3,v_4\}\subset \HH^3,$ we denote by $\tau_{v_1v_2v_3v_4}$ the tetrahedron in $\HH^3$ with vertices $\{v_i\}_{i=1}^4,$ i.e. the convex hull of  $\{v_i\}_{i=1}^4$ in $\HH^3.$ We call the tetrahedron $\tau_{v_1v_2v_3v_4}$ a \emph{real}, or \emph{non-degenerate}, tetrahedron if the four vertices are  in general position, i.e. none of them is contained in the convex hull of the other three. \begin{definition}\label{tetra-def} For a real hyperbolic tetrahedron $\tau_{v_1v_2v_3v_4},$ we denote by $l_{ij}$ the edge length of the edge $v_iv_j,$ by $\beta_{ij}$ the dihedral angle at the edge $v_iv_j,$ by $\alpha_i$ the solid angle at the vertex $v_i,$ and by $\vol_{v_1v_2v_3v_4},$ $\vol$ for short, the volume of $\tau_{v_1v_2v_3v_4}.$
\end{definition} It is well-known that the tetrahedron is determined, up to an isometry of $\HH^3$, by the data $\pmb{l}=(l_{12},l_{13},l_{14},l_{23},l_{24},l_{34})$ or $\pmb{\beta}=(\beta_{12},\beta_{13},\beta_{14},\beta_{23},\beta_{24},\beta_{34}).$ Hence $\pmb{\beta}(\pmb{l})$ is a bijective map for a tetrahedron, see e.g. \cite{Vinberg} for more details. The Gram matrix in terms of edge lengths is given by
\[G:=\left(\begin{matrix}
  -1 & -\cosh l_{12} & -\cosh l_{13} &-\cosh l_{14}  \\
  -\cosh l_{12} & -1 & -\cosh l_{23} &-\cosh l_{24}\\
  -\cosh l_{13} & -\cosh l_{23} & -1 &-\cosh l_{34} \\
  -\cosh l_{14} & -\cosh l_{24} & -\cosh l_{34} &-1 \\
  \end{matrix}\right).\]
In this paper, we adopt the notation $\{i,j,k,l\}$ for a rearrangement of $\{1,2,3,4\},$ i.e. $i,j,k,l$ are distinct.
It is also well-known, see e.g. \cite{MU05}, that for any $1\leq i<j\leq 4,$
\begin{equation}\label{eq:eq2-4}\cos\beta_{ij}=\frac{c_{kl}}{\sqrt{c_{kk}c_{ll}}},\end{equation} where $c_{kl}$ is the $(k,l)$-cofactor of the matrix $G,$ i.e. $c_{kl}=(-1)^{k+l}\det G_{kl},$ where $G_{kl}$ is the matrix obtained from $G$ by deleting $k$-th row and $l$-column.

The Schl\"{a}fli formula, see e.g. \cite{Vinberg}, reads as
$$-2d \vol =\sum_{1\leq i<j\leq 4}l_{ij}d\beta_{ij}.$$ Note that the Jacobian matrix
$\frac{\partial\pmb{ \beta}}{\partial \pmb l}$ is symmetric and non-degenerate.

We denote by $\tau=\{1234\}$ the combinatorial tetrahedron of vertices $\{1,2,3,4\}$ which contains only combinatorial information. For the convenience, we write $\tau=\{1234\}=\{ijkl\}$ where $\{i,j,k,l\}$ is a rearrangement of $\{1,2,3,4\}$. For any $\br=(r_1,r_2,r_3,r_4)\in\mathds{R}^4_+$, we write
$$Q(\br):=Q^{\HH}(r_1,r_2,r_3,r_4),$$ where $Q^{\HH}(\cdot)$ is defined in \eqref{hyper-tetra} in the introduction. We would like to endow $\tau$ with a metric structure isometric to a hyperbolic tetrahedron $\tau(\br)$ in $\HH^3$ such that the edge length of each edge $\{ij\}$ is given by \begin{equation}\label{eq:eq2-2}l_{ij}=r_i+r_j,\end{equation} which is unique up to an isometry in $\HH^3$ if it exists. It is well-known, see \cite{Lag-MW,Mau}, that the tetrahedron $\tau(\br)$ exists and is non-degenerate if and only if $Q(\br)>0.$ In this case, we call $\tau=\{1234\}$ with the ball packing $\br,$ denoted by $\tau(\br),$ a \emph{real tetrahedron}.
Otherwise, for $Q(\br)\leq 0,$ $\tau(\br)$ is degenerate and we call it a \emph{virtual tetrahedron}.
The set of real tetrahedra on $\tau=\{1234\}$ is given by
\begin{equation}
\Omega_{1234}=\left\{\br=(r_1,r_2,r_3,r_4)\in\R^4_{+}:Q(\br)>0\right\}.
\end{equation}
It is simply-connected and its boundary is piecewise analytic. In fact, one can prove that $$\coth(\Omega_{1234})=\{(\coth r_1,\cdots,\coth r_4):(r_1,r_2,r_3,r_4)\in\Omega_{1234}\}$$ is a star-shaped domain with respect to the point $(\coth1,\coth1,\coth1,\coth1),$ and hence it is simply-connected. This yields $\Omega_{1234}$ is simply-connected by the homeomorphism between two sets.

In this paper, we denote by $\pmb{1}$ the vector in Euclidean space whose entries are constant $1.$ It is obvious that $t\pmb{1}\in \Omega_{1234}$ for any $t>0.$
For a real tetrahedron $\tau(\br)$, we denote by $\beta_{ij}(\br),$ $\alpha_i(\br)$ and $\vol(\br)$ the corresponding quantities in Definition~\ref{tetra-def}, and omit the dependence of $\br$ if it is clear in the context.
Note that for any vertex $i,$
\begin{equation}\label{eq:eq2-3}\alpha_i=\beta_{ij}+\beta_{ik}+\beta_{il}-\pi.\end{equation} The Schl\"{a}fli formula implies that
$$-2d \vol =\sum_{i=1}^4r_id\alpha_i.$$
Cooper and Rivin \cite{CR} introduced the following functional on the set of real tetrahedra,
\begin{eqnarray}\label{eq:crivin}\mathcal{U}: &&\Omega_{1234} \to \R, \\&&\br\mapsto \mathcal{U}(\br)=\sum_{i=1}^4\alpha_ir_i+2\vol.\nonumber\end{eqnarray} By the Schl\"{a}fli formula,
\begin{equation}\label{eq:1-1}d \mathcal{U}(\br)=\sum_{i=1}^4\alpha_id r_i.\end{equation} This implies that
\begin{eqnarray}&&\nabla \mathcal{U}(\br)=\pmb{ \alpha}\label{eq:firstder}\\&&
\nabla^2 \mathcal{U}(\br)=\frac{\partial\pmb{ \alpha}}{\partial \br}\label{eq:secondder},\end{eqnarray} where $\frac{\partial\pmb{ \alpha}}{\partial \br}$ denotes the Jacobian matrix of the map $\br\mapsto \pmb{\alpha}(\br).$
\begin{lemma}[Rivin \cite{Ri}]\label{lem:rivin}
For any $\br\in \Omega_{1234},$ $\frac{\partial\pmb{ \alpha}}{\partial \br}$ is a positive definite matrix.
\end{lemma}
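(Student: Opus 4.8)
The plan is to identify $\frac{\partial\pmb{\alpha}}{\partial\br}$ with the Hessian $\nabla^{2}\mathcal{U}(\br)$ of the Cooper--Rivin functional and then, using the tangency relations $l_{ij}=r_i+r_j$, to reduce the statement to a definiteness property of the Jacobian $\frac{\partial\pmb{\beta}}{\partial\pmb{l}}$ of dihedral angles in edge lengths. By \eqref{eq:secondder} the matrix in question is $\nabla^{2}\mathcal{U}$, hence automatically symmetric (this is also immediate from \eqref{eq:eq2-3} together with the symmetry of $\frac{\partial\pmb{\beta}}{\partial\pmb{l}}$), so only definiteness of the associated quadratic form must be proved. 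Let $B\in\R^{6\times4}$ be the vertex--edge incidence map of $\tau$, so that $\pmb{l}=B\br$, and observe from \eqref{eq:eq2-3} that $\pmb{\alpha}=B^{\top}\pmb{\beta}-\pi\,\pmb{1}$. The chain rule then yields
\[
\frac{\partial\pmb{\alpha}}{\partial\br}=B^{\top}\Big(\frac{\partial\pmb{\beta}}{\partial\pmb{l}}\Big)B,
\]
so that $x^{\top}\big(\tfrac{\partial\pmb{\alpha}}{\partial\br}\big)x=(Bx)^{\top}M(Bx)$ with $M:=\frac{\partial\pmb{\beta}}{\partial\pmb{l}}$; since $B$ is injective, it is enough to show that $M$ is definite on the $4$-dimensional subspace $\operatorname{Im}B=\{(y_a+y_b)_{1\le a<b\le 4}:y\in\R^4\}\subset\R^6$. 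Here the ball-packing hypothesis $l_{ij}=r_i+r_j$ enters essentially: the argument must restrict $M$ to $\operatorname{Im}B$, and I do not expect $M$ to be definite on all of $\R^6$, so one cannot reduce the lemma to convexity of hyperbolic volume in the six dihedral angles alone.

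To establish definiteness of $M$ on $\operatorname{Im}B$, I would differentiate the cosine law \eqref{eq:eq2-4}, $\cos\beta_{ij}=c_{kl}/\sqrt{c_{kk}c_{ll}}$, in the edge lengths, using $\partial G_{ab}/\partial l_{cd}=-\sinh l_{ab}$ if $\{a,b\}=\{c,d\}$ and $0$ otherwise, together with Jacobi's identity for the derivative of the adjugate (which expresses derivatives of the first cofactors $c_{**}$ through $\det G$ and the second cofactors of $G$). This gives every entry $\partial\beta_{ij}/\partial l_{kl}$ in closed form in terms of the quantities $\sinh l_{**}$ and the cofactors of $G$, from which one reads off the relevant sign pattern: positivity of the diagonal entries (``Schl\"afli positivity''), opposite signs for edges sharing a vertex versus opposite edges, and bounds on the magnitudes. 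Substituting a tangent vector $\delta\pmb{l}=By$ into $\delta\pmb{l}^{\top}M\,\delta\pmb{l}$ and reorganizing — here the special shape of $By$ is crucial — one should be able to write the quadratic form as a sum of squares with positive geometric weights plus a strictly signed contribution arising from the genuinely hyperbolic (volume) term in the Schl\"afli formula, with no cancellation between the two. Tracking the overall sign through the conventions then gives the asserted definiteness.

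The main obstacle is precisely this last piece of bookkeeping: controlling the signs and sizes of the cofactor expressions and verifying that the form stays definite after restriction to $\operatorname{Im}B$, since the unconstrained matrix $M$ does not settle it. In parallel I would pursue a more geometric route that bypasses the cofactor computation: the four mutually tangent balls $B_1,\dots,B_4$ have a unique common radical center $c\in\HH^3$ (the point whose power with respect to all four spheres coincides), and subdividing $\tau(\br)$ from $c$ expresses the off-diagonal entries $\partial\alpha_i/\partial r_j$ directly as hyperbolic ``dual edge lengths''. This exhibits $\frac{\partial\pmb{\alpha}}{\partial\br}$ as a weighted discrete Laplacian on the $1$-skeleton of $\tau$ corrected by a definite diagonal term coming from the ambient curvature, whence the definiteness is transparent. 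One may, of course, instead simply invoke Rivin's theorem \cite{Ri}; the foregoing is a reconstruction of the reasoning behind it.
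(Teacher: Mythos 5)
Your setup coincides with the paper's: you factor $\frac{\partial\pmb{\alpha}}{\partial\br}=B^{\top}\bigl(\frac{\partial\pmb{\beta}}{\partial\pmb{l}}\bigr)B$ with $B=\frac{\partial\pmb{l}}{\partial\br}$ the vertex--edge incidence matrix, note symmetry, and correctly reduce the lemma to definiteness of the quadratic form $(By)^{\top}M(By)$ with $M=\frac{\partial\pmb{\beta}}{\partial\pmb{l}}$. But from that point on your argument is a plan rather than a proof, and the decisive step is missing: neither the cofactor sign analysis (which you yourself flag as ``the main obstacle'') nor the radical-center/discrete-Laplacian decomposition is actually carried out, and it is exactly there that the content of the lemma lives. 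As written, the proposal establishes only what is easy --- symmetry and the reduction to $\operatorname{Im}B$ --- and defers the definiteness itself to an unverified computation or to citing Rivin.

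The paper closes this gap with a much softer argument that you have all the ingredients for but do not assemble. Since $M$ is non-degenerate (invertibility of $\pmb{\beta}\mapsto\pmb{l}$) and $B$ has rank $4$, the symmetric matrix $B^{\top}MB$ is non-degenerate at \emph{every} point of $\Omega_{1234}$: its eigenvalues never vanish. The domain $\Omega_{1234}$ is connected, the eigenvalues depend continuously on $\br$, and at the single symmetric point $\br=\pmb{1}$ the matrix is computed explicitly (it is $c$ times a matrix with diagonal $-3\cosh 2$ and off-diagonal $1$, with $c<0$ after tracking conventions) and seen to be positive definite. Hence no eigenvalue can change sign anywhere, and positive definiteness propagates to all of $\Omega_{1234}$. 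This ``non-degeneracy everywhere plus one explicit point plus connectedness'' device entirely bypasses the sign bookkeeping you were attempting; I would recommend either adopting it or, if you want a direct proof of definiteness on $\operatorname{Im}B$, actually completing one of your two sketched computations, since at present neither is more than a statement of intent.
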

\begin{proof} For the convenience of the readers, we recall the proof here which was given by Rivin in \cite{Ri}.
We denote by $\pmb{l}(\br)$ and $\pmb{\alpha}(\pmb{\beta})$ the maps defined in \eqref{eq:eq2-2} and \eqref{eq:eq2-3} respectively. Then one has $\pmb\alpha(\br)=\pmb\alpha\circ\pmb\beta\circ\pmb{l}(\br).$ This yields that
$$\frac{\partial\pmb{ \alpha}}{\partial \br}=\frac{\partial\pmb{\alpha}}{\partial \pmb\beta}\frac{\partial\pmb{\beta}}{\partial \pmb{l}}\frac{\partial \pmb{l}}{\partial \br}.$$ By the calculation,
\[
  \frac{\partial\pmb{\alpha}}{\partial \pmb\beta}=\left(\frac{\partial \pmb{l}}{\partial \br}\right)^T=
  \left(\begin{matrix}
  1 & 1 & 1 &0 & 0& 0 \\
  1 & 0 & 0 &1 & 1& 0 \\
  0 & 1 & 0 &1 & 0& 1 \\
  0 & 0 & 1 &0 & 1& 1 \\
  \end{matrix}\right).
\] Note that the eigenvalues of $\frac{\partial\pmb{ \alpha}}{\partial \br}$ and those of
$\frac{\partial\pmb{\beta}}{\partial \pmb{l}}\frac{\partial \pmb{l}}{\partial \br}\frac{\partial\pmb{\alpha}}{\partial \pmb\beta}$ are same up to some zeros. Since $\frac{\partial\pmb{\beta}}{\partial \pmb{l}}$ is non-degenerate and $\frac{\partial \pmb{l}}{\partial \br}\frac{\partial\pmb{\alpha}}{\partial \pmb\beta}$ has four nonzero eigenvalues, $\frac{\partial\pmb{ \alpha}}{\partial \br}$ is non-degenerate for any $\br\in \MT$ by considering the ranks of matrices. By the computation at a specific point $\br={\pmb{1}}=(1,1,1,1),$ one can show that $\frac{\partial\pmb{ \alpha}}{\partial \br}(\pmb{1})$ is positive definite, see \eqref{eq:eqa-1} in Appendix (or see the Appendix in \cite{GZX}, where $\frac{\partial\pmb{\beta}}{\partial \pmb{l}}$ is calculated for regular tetrahedra.). Hence by the continuity, $\frac{\partial\pmb{ \alpha}}{\partial \br}$ is positive definite for any $\br\in \Omega_{1234}.$

\end{proof}

To abbreviate the notation in the paper, we introduce the following change of variables.
\begin{definition}\label{def:rtoy} For any $\br=(r_1,r_2,r_3,r_4)\in \R^4_+,$ we define
$$y_i:=\coth r_i,\quad \forall\ 1\leq i\leq 4.$$
\end{definition}
By \eqref{eq:eq2-4}, we have an explicit formula for the dihedral angles of a real tetrahedron.
For a real tetrahedron $\tau(\br),$
\begin{eqnarray}\label{eq:eq2-5}
\cos\beta_{ij}(\br)&=&\frac{\sinh r_i\sinh r_j\sqrt{\sinh r_k\sinh r_l}}{4\sqrt{\sinh(r_i+r_j+r_k)\sinh(r_i+r_j+r_l)}}\times\\&&(Q(\br)-(y_i+y_j)^2+(y_k-y_l)^2).\nonumber
\end{eqnarray}

By taking the inverse function, $\beta_{ij}=\arccos(\cdots),$ from which we can calculate
$$\frac{\partial \beta_{ij}}{\partial r_i}, \frac{\partial \beta_{ij}}{\partial r_k},\cdots,$$ see the formulae in Appendix.
Using these results and \eqref{eq:eq2-3}, we obtain
\begin{eqnarray}\label{eq:eq2-7}\frac{\partial \alpha_i}{\partial r_j}&=&\frac{\sinh r_k\sinh r_l}{\sqrt{Q(\br)}\sinh(r_i+r_j+r_k)\sinh(r_i+r_j+r_l)}\times\nonumber\\&&\Big[2-(y_k-y_l)^2+y_i(y_j+y_k+y_l)+y_j(y_i+y_k+y_l)\Big]
\end{eqnarray} By this formula, we prove the following lemma, which was derived in the Euclidean setting by Glickenstein, see \cite[Corollary~5]{G2}.
\begin{lemma}\label{lem:partialpositive} For a real tetrahedron $\tau(\br),$ let $r_i=\min\{r_i,r_j,r_k,r_l\}.$ Then
$$\frac{\partial \alpha_i}{\partial r_j}>0,\frac{\partial \alpha_i}{\partial r_k}>0,\frac{\partial \alpha_i}{\partial r_l}>0.$$
\end{lemma}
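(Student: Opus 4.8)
The plan is to work directly from the explicit formula \eqref{eq:eq2-7}. Fix a real tetrahedron $\tau(\br)$ and suppose $r_i=\min\{r_i,r_j,r_k,r_l\}$; equivalently, in the variables of Definition~\ref{def:rtoy}, $y_i=\max\{y_i,y_j,y_k,y_l\}$ since $\coth$ is strictly decreasing on $\R_+$. The prefactor in \eqref{eq:eq2-7},
$$\frac{\sinh r_k\sinh r_l}{\sqrt{Q(\br)}\,\sinh(r_i+r_j+r_k)\sinh(r_i+r_j+r_l)},$$
is strictly positive because $\br\in\Omega_{1234}$ forces $Q(\br)>0$ and all the hyperbolic sines of positive arguments are positive. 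Hence for each of the three statements it suffices to show that the bracketed term
$$P_j:=2-(y_k-y_l)^2+y_i(y_j+y_k+y_l)+y_j(y_i+y_k+y_l)$$
is strictly positive, and similarly for the analogous brackets $P_k$ (with $j,k$ swapped, i.e. $2-(y_j-y_l)^2+y_i(y_j+y_k+y_l)+y_k(y_i+y_j+y_l)$) and $P_l$.

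The key step is to bound $P_j$ from below using $y_i\geq y_j$, $y_i\geq y_k$, $y_i\geq y_l$, together with $y_i,y_j,y_k,y_l>1$ (since $\coth r>1$ for $r>0$). First I would discard the ambiguous term $-(y_k-y_l)^2$: since $y_i\ge \max(y_k,y_l)$ we have $|y_k-y_l|\le y_i$, hence $(y_k-y_l)^2\le y_i\max(y_k,y_l)\le y_i y_k$ say (choosing the larger of $y_k,y_l$; WLOG $y_k\ge y_l$). Among the positive terms in $P_j$ we have the summand $y_i y_k$ coming from $y_i(y_j+y_k+y_l)$, which already cancels $(y_k-y_l)^2$; what remains is
$$P_j\ \ge\ 2+y_iy_j+y_iy_l+y_j(y_i+y_k+y_l)\ >\ 0,$$
every term being positive. (If instead $y_l\ge y_k$, use the symmetric bound $(y_k-y_l)^2\le y_iy_l$ and the summand $y_iy_l$ from $y_i(y_j+y_k+y_l)$.) The same argument, verbatim with the roles of $j$ and $k$ (resp. $j$ and $l$) interchanged, handles $P_k$ and $P_l$: in each case the squared difference involves the two indices other than the one being differentiated, one of which is dominated by $y_i$, and the corresponding $y_i y_\bullet$ term in the expansion of $y_i(y_j+y_k+y_l)$ absorbs it, leaving a sum of positive terms.

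The only real subtlety is bookkeeping: one must check that in each of $P_k$ and $P_l$ the term that absorbs the square genuinely appears in $y_i(y_j+y_k+y_l)$ and is not itself needed to cancel something else — but since $y_i(y_j+y_k+y_l)$ contributes all three products $y_iy_j,y_iy_k,y_iy_l$ and only one of them is consumed, while the remaining additive terms $2$ and $y_\bullet(y_i+\cdots)$ are manifestly positive, this is immediate. So there is no serious obstacle; the proof is a one-line estimate once \eqref{eq:eq2-7} and the minimality of $r_i$ are in hand. I would state the bound for $\partial\alpha_i/\partial r_j$ in detail and remark that $\partial\alpha_i/\partial r_k$ and $\partial\alpha_i/\partial r_l$ follow by the symmetry of \eqref{eq:eq2-7} in the three indices $\{j,k,l\}$.
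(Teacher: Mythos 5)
Your proposal is correct and follows essentially the same route as the paper: both start from the explicit formula \eqref{eq:eq2-7}, observe that the prefactor is positive since $Q(\br)>0$ for a real tetrahedron, and then use $y_i=\max\{y_i,y_j,y_k,y_l\}$ to show the bracketed term dominates $(y_k-y_l)^2$ (the paper simply notes $y_i(y_j+y_k+y_l)\geq (y_k-y_l)^2$, which is the same estimate you carry out with slightly more bookkeeping). No gaps.
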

\begin{proof} Without loss of generality, we show that $\frac{\partial \alpha_i}{\partial r_j}>0.$ By the assumption, $y_i=\max\{y_i,y_j,y_k,y_l\},$ which yields that $$y_i(y_j+y_k+y_l)\geq (y_k-y_l)^2.$$ This implies that
\begin{eqnarray*}2-(y_k-y_l)^2+y_i(y_j+y_k+y_l)+y_j(y_i+y_k+y_l)\geq 2.
\end{eqnarray*}
The lemma follows from \eqref{eq:eq2-7}.
\end{proof} We will use this lemma to derive a comparison principle for solid angles of tetrahedra, see Theorem~\ref{thm:compare}.

The following monotonicity is useful for our applications.
\begin{proposition}\label{prop:regulartetra} The function $t\mapsto\alpha_1(t\pmb{1})$ is decreasing in $(0,\infty),$ and
$$\lim_{t\to 0}\alpha_1(t\pmb{1})=\alpha_1^{\EE}(\pmb{1}),\quad \lim_{t\to \infty}\alpha_1(t\pmb{1})=0,$$ where $\alpha_1^{\EE}(\pmb{1}):=3\arccos{1/3}-\pi$ denotes the solid angle in a regular Euclidean tetrahedron of side length $2.$
\end{proposition}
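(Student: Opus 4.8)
The plan is to analyze the regular tetrahedron $\tau(t\pmb 1)$ by specializing the general formulae already established, and to extract both the monotonicity and the two limits from a single explicit expression for $\alpha_1(t\pmb 1)$. First I would compute $Q(t\pmb 1)$: with $y:=\coth t$ all four coordinates equal, so $Q(t\pmb 1)=(4y)^2-2\cdot 4y^2+4=8y^2+4$, which is positive for all $t>0$, confirming $t\pmb 1\in\Omega_{1234}$ (already noted in the text). By the symmetry of the regular tetrahedron all six dihedral angles coincide; call the common value $\beta(t)$. Then \eqref{eq:eq2-3} gives $\alpha_1(t\pmb 1)=3\beta(t)-\pi$, so monotonicity of $\alpha_1$ is equivalent to monotonicity of $\beta$, and the limits of $\alpha_1$ reduce to the limits of $\beta$.

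Next I would get a clean closed form for $\beta(t)$ by specializing \eqref{eq:eq2-5}. Setting $r_i=r_j=r_k=r_l=t$, the factor $(y_k-y_l)^2$ vanishes, $(y_i+y_j)^2=4y^2$, and $\sinh(r_i+r_j+r_k)=\sinh 3t$, $\sqrt{\sinh r_k\sinh r_l}=\sinh t$, so
\[
\cos\beta(t)=\frac{\sinh^2 t\cdot\sinh t}{4\sinh 3t}\bigl(Q(t\pmb 1)-4y^2\bigr)=\frac{\sinh^3 t}{4\sinh 3t}\,(4y^2+4)=\frac{\sinh^3 t}{\sinh 3t}\,(y^2+1).
\]
Since $y^2+1=\coth^2 t+1=\dfrac{\cosh^2 t+\sinh^2 t}{\sinh^2 t}=\dfrac{\cosh 2t}{\sinh^2 t}$, this simplifies to
\[
\cos\beta(t)=\frac{\sinh t\,\cosh 2t}{\sinh 3t}.
\]
Now I would show the right-hand side is \emph{increasing} in $t\in(0,\infty)$; since $\beta=\arccos(\cdot)$ is decreasing, $\beta(t)$ is decreasing, hence so is $\alpha_1(t\pmb 1)=3\beta(t)-\pi$. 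Using $\sinh 3t=\sinh t\,(2\cosh 2t+1)$ (from $\sinh 3t=3\sinh t+4\sinh^3 t$ and $\cosh 2t=1+2\sinh^2 t$), the expression collapses further to
\[
\cos\beta(t)=\frac{\cosh 2t}{2\cosh 2t+1}=\frac12-\frac{1}{2(2\cosh 2t+1)},
\]
which is manifestly strictly increasing in $t$ because $\cosh 2t$ is strictly increasing on $(0,\infty)$. This gives the monotonicity. For the limits: as $t\to 0$, $\cosh 2t\to 1$, so $\cos\beta(t)\to\frac12-\frac16=\frac13$, giving $\beta(t)\to\arccos(1/3)$ and $\alpha_1(t\pmb 1)\to 3\arccos(1/3)-\pi=\alpha_1^{\EE}(\pmb 1)$, as expected since the ball packing metric degenerates to the Euclidean regular tetrahedron of side length $2$ in the small-radius limit. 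As $t\to\infty$, $\cosh 2t\to\infty$, so $\cos\beta(t)\to\frac12$, giving $\beta(t)\to\pi/3$ and $\alpha_1(t\pmb 1)\to 3\cdot\frac\pi3-\pi=0$.

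The only genuinely delicate point is the identification of the $t\to 0$ limit with the Euclidean solid angle: one must be sure that $\alpha_1(\br)\to\alpha_1^{\EE}(\br)$ as $\br\to 0$ along the diagonal. This follows from the explicit computation above, which sidesteps any geometric limiting argument — the formula for $\cos\beta(t)$ extends continuously to $t=0$ with value $1/3$, exactly the dihedral-angle cosine of a regular Euclidean tetrahedron. (Alternatively one can invoke the fact, standard in this circle of ideas, that rescaled small hyperbolic tetrahedra converge to Euclidean ones; but the direct formula is cleaner and self-contained.) Everything else is elementary manipulation of hyperbolic identities, so I expect no real obstacle beyond bookkeeping.
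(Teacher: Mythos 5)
Your proposal is correct and follows essentially the same route as the paper: both reduce to the single dihedral angle $\beta(t)$ via $\alpha_1(t\pmb 1)=3\beta(t)-\pi$, specialize \eqref{eq:eq2-5} to obtain $\cos\beta(t)=\cosh 2t/(1+2\cosh 2t)$, deduce monotonicity from the monotonicity of this expression, and read off the limits at $t\to 0$ and $t\to\infty$. Your derivation merely spells out the algebra (and the limit evaluations) that the paper leaves as ``direct computation.''
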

\begin{proof}
Set $f(t):=\cos \beta_{12}(t\pmb{1}).$ By \eqref{eq:eq2-5},
$$f(t)=\frac{\cosh(2t)}{1+2\cosh(2t)}.$$ By taking the derivative,
$$f'(t)=\frac{2\sinh(2t)}{(1+2\cosh(2t))^2}> 0.$$ Hence $f(t)$ is strictly increasing and $\beta_{12}(t\pmb{1})$ is strictly decreasing.
By the symmetry for $\br=t\pmb{1},$
$$h(t)=\alpha_1(t\pmb{1})=3\beta_{12}(t\pmb{1})-\pi$$ which is decreasing. Hence the limits exist as $t\to0$ and $t\to \infty,$ and the results follow from direct computation.
\end{proof}



\subsection{Virtual tetrahedra} By the change of variables in Definition~\ref{def:rtoy},
$$Q(\br)=\left(\sum_{i=1}^4 y_i\right)^2-2\sum_{i=1}^4 y_i^2+4.$$

We consider the solutions to $Q(\br)\leq 0.$ Then for any $1\leq i\leq 4,$ either
\begin{eqnarray}\label{eq:eq2-1}&&y_i\geq y_j+y_k+y_l+2(y_jy_k+y_ky_l+y_ly_j+1)^{\frac12},\quad \mathrm{or},\nonumber\\
&&y_i\leq y_j+y_k+y_l-2(y_jy_k+y_ky_l+y_ly_j+1)^{\frac12}.\end{eqnarray} For any $i,$ we set
$$f_i(y_j,y_k,y_l)=y_j+y_k+y_l+2(y_jy_k+y_ky_l+y_ly_j+1)^{\frac12}$$
and denote by
\begin{equation}
D_i=\{\br\in\R_{+}^4:y_i\geq f_i(y_j,y_k,y_l)\},
\end{equation} the $i$-th virtual tetrahedron space. By the definition, $$D_i\subset \R^4_+\setminus \Omega_{1234},\quad\forall 1\leq i\leq 4.$$
One can prove that $D_i$ is simply connected.

\begin{proposition}\label{lemma-Di-imply-ri-small}
In the $i$-th virtual space $D_i$, $r_i<\min\{r_j,r_k,r_l\}.$
\end{proposition}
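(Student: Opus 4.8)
The plan is to work entirely in the $y$-coordinates, $y_m = \coth r_m$, and exploit that $\coth$ is a strictly decreasing bijection from $(0,\infty)$ onto $(1,\infty)$, so that $r_i < \min\{r_j,r_k,r_l\}$ is equivalent to $y_i > \max\{y_j,y_k,y_l\}$. Thus the statement reduces to: if $\br \in D_i$, i.e. $y_i \geq f_i(y_j,y_k,y_l) = y_j+y_k+y_l + 2(y_jy_k+y_ky_l+y_ly_j+1)^{1/2}$ with all $y_m > 1$, then $y_i > y_j$, $y_i > y_k$, $y_i > y_l$. By symmetry in $j,k,l$ it suffices to show $y_i > y_j$.

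The key step is an elementary inequality. From the defining inequality of $D_i$,
\[
y_i \geq y_j + (y_k + y_l) + 2(y_jy_k+y_ky_l+y_ly_j+1)^{1/2}.
\]
So it is enough to show that $(y_k+y_l) + 2(y_jy_k+y_ky_l+y_ly_j+1)^{1/2} > 0$, which is immediate since $y_k, y_l > 1 > 0$ and the square root is a nonnegative real number (note the radicand $y_jy_k+y_ky_l+y_ly_j+1$ is positive, so $f_i$ is well-defined and real on $\R^4_+$). Hence $y_i > y_j$, and likewise $y_i > y_k$ and $y_i > y_l$. Translating back via the monotonicity of $\coth$ gives $r_i < r_j$, $r_i < r_k$, $r_i < r_l$, i.e. $r_i < \min\{r_j,r_k,r_l\}$.

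There is essentially no obstacle here: the only point requiring a word of care is that the inequality is \emph{strict}, which is why we keep the positive terms $y_k+y_l$ (each $>1$) rather than trying to absorb everything into the square root — even on the boundary $y_i = f_i(y_j,y_k,y_l)$ of $D_i$ the difference $y_i - y_j = (y_k+y_l) + 2(\cdots)^{1/2}$ is bounded below by $2$, so strictness is never in danger. One should also remark (or recall from the preceding discussion) that $D_i$ corresponds to the first alternative in \eqref{eq:eq2-1}; the second alternative $y_i \leq y_j+y_k+y_l - 2(\cdots)^{1/2}$ is a different component of $\{Q(\br)\leq 0\}$ and is precisely $\bigcup_{m\neq i} D_m$ up to the degenerate locus, so it is not part of $D_i$ and need not be considered.
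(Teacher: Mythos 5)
Your proof is correct and is essentially the paper's own argument: the paper also passes to $y$-coordinates and observes in one line that $y_i\geq f_i(y_j,y_k,y_l)>\max\{y_j,y_k,y_l\}$, which is exactly the inequality you verify (in slightly more detail) before translating back through the monotonicity of $\coth$.
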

\begin{proof}
This follows from
$$y_i\geq f_i(y_j,y_k,y_l)>\max\{y_j,y_k,y_l\}.$$
\end{proof} This proposition implies that $\{D_i\}_{i=1}^4$ are mutually disjoint.

\begin{proposition}\label{decomp}
$$\R_{+}^4-\Omega_{1234}=D_1\cup D_2\cup D_3\cup D_4.$$
\end{proposition}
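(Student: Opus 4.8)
The plan is to prove the two inclusions separately. One direction, $D_1\cup D_2\cup D_3\cup D_4\subseteq \R^4_+\setminus\Omega_{1234}$, is immediate: it was already recorded above that each $D_i$ is contained in $\R^4_+\setminus\Omega_{1234}$, so the union is too. All the work is in the reverse inclusion, namely showing that any $\br\in\R^4_+$ with $Q(\br)\le 0$ lies in one of the $D_i$.

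For this I would argue that the correct index is the one of \emph{minimal radius}: set $m$ so that $y_m=\coth r_m=\max\{y_1,y_2,y_3,y_4\}$ (equivalently $r_m=\min\{r_1,r_2,r_3,r_4\}$) --- which is exactly the location forced by Proposition~\ref{lemma-Di-imply-ri-small} --- and claim $\br\in D_m$. Applying the dichotomy \eqref{eq:eq2-1} with $i=m$: since $Q(\br)\le 0$, either $y_m\ge f_m(y_j,y_k,y_l)$, in which case $\br\in D_m$ by definition and we are done, or else $y_m\le g_m:=(y_j+y_k+y_l)-2(y_jy_k+y_ky_l+y_ly_j+1)^{1/2}$. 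The heart of the argument is to exclude this second possibility.

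To do so I would assume $y_m\le g_m$, rewrite it as $2(y_jy_k+y_ky_l+y_ly_j+1)^{1/2}\le (y_j+y_k+y_l)-y_m$ (so the right-hand side is nonnegative, since otherwise $g_m<0<y_m$ already contradicts $y_m\le g_m$), square, and reach $4(y_jy_k+y_ky_l+y_ly_j+1)\le\bigl((y_j+y_k+y_l)-y_m\bigr)^2$. Then, relabelling $\{y_j,y_k,y_l\}=\{a,b,c\}$ with $a\le b\le c$ and using $y_m\ge c$ to bound $(a+b+c)-y_m\le a+b$, the right-hand side is at most $(a+b)^2$; on the other hand the elementary identity $4(ab+bc+ca+1)-(a+b)^2=-(a-b)^2+4c(a+b)+4$, together with $c\ge b$, $a\ge 0$ and $(a-b)^2\le b^2$, shows $4(ab+bc+ca+1)$ strictly exceeds $(a+b)^2$ --- a contradiction. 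Hence $y_m\ge f_m(y_j,y_k,y_l)$, i.e. $\br\in D_m$, and the union identity follows.

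The only mildly delicate points --- and the ones I would be most careful about --- are the sign check that legitimizes squaring the inequality $y_m\le g_m$, and the boundary case $Q(\br)=0$, where the same dichotomy still forces $y_m=f_m\ge f_m$. Conceptually the statement is not hard: it simply says that when four mutually tangent balls fail to span a non-degenerate hyperbolic tetrahedron, it is the ball of smallest radius that is ``too small'', and this is captured quantitatively by the inequality above; no further topological input is needed beyond what is already used to see that each $D_i$ sits outside $\Omega_{1234}$.
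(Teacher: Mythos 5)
Your proof is correct and follows essentially the same route as the paper's: one inclusion is definitional, and for the reverse inclusion you apply the dichotomy \eqref{eq:eq2-1} at the vertex of maximal $y$ (minimal radius) and exclude the lower branch by squaring and an elementary inequality, which is exactly the paper's contradiction argument with only cosmetically different algebra in the final estimate. The sign check you flag is in fact automatic (the inequality $y_m\le g_m$ itself forces $(y_j+y_k+y_l)-y_m\ge 2(\cdots)^{1/2}\ge 0$), so nothing further is needed.
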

\begin{proof} It suffices to show that $\R_{+}^4-\Omega_{1234}\subset \cup_{i=1}^4D_i.$ Suppose it is not true, then there is a $\br\in \R_{+}^4\setminus(\Omega_{1234}\cup (\cup_{i=1}^4D_i)).$ Then $Q(\br)\leq 0.$ Without loss of generality, we assume that $y_1\geq y_2\geq y_3\geq y_4.$ By \eqref{eq:eq2-1} for $i=1,$
$$y_1\leq y_2+y_3+y_4-2(y_2y_3+y_3y_4+y_4y_2+1)^{\frac12}.$$ This yields that
$$y_3+y_4\geq 2(y_2y_3+y_3y_4+y_4y_2+1)^{\frac12}.$$ By taking the square in both sides of the inequality, we get
$$(y_3-y_4)^2\geq 4 y_2(y_3+y_4)\geq 2(y_3+y_4)^2.$$ This is a contradiction and the proposition follows.
\end{proof}

The next corollary follows directly from above propositions, and hence we omit its proof here. 
\begin{corollary}\label{lemma-ri-small-imply-Di}
If $Q(\br)\leq0$, then $\{r_1,r_2,r_3, r_4\}$ have a strictly minimal value. Moreover, if $\{r_1,r_2,r_3, r_4\}$ attains its strictly minimal value at $r_i$ for some $i\in\{1,2,3,4\}$, then $r\in D_i$.
\end{corollary}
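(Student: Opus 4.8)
The plan is to chain together the three results just established: the decomposition $\R^4_+\setminus\Omega_{1234}=D_1\cup D_2\cup D_3\cup D_4$ from Proposition~\ref{decomp}, the pointwise estimate of Proposition~\ref{lemma-Di-imply-ri-small}, and the mutual disjointness of the $D_i$ that it implies. First I would note that the hypothesis $Q(\br)\le 0$ is, by definition of $\Omega_{1234}=\{\br\in\R^4_+:Q(\br)>0\}$, exactly the statement $\br\in\R^4_+\setminus\Omega_{1234}$. Applying Proposition~\ref{decomp} then produces an index $j\in\{1,2,3,4\}$ with $\br\in D_j$.

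Next, I would invoke Proposition~\ref{lemma-Di-imply-ri-small} for this particular $j$: it gives $r_j<r_k$ for every $k\neq j$. Thus $r_j$ is strictly smaller than each of the other three radii, which is precisely what it means for $\{r_1,r_2,r_3,r_4\}$ to have a strictly minimal value, and settles the first assertion. For the ``moreover'' part, suppose $\{r_1,r_2,r_3,r_4\}$ attains its strict minimum at $r_i$ for some $i$. Since a strict minimum is realized at a unique index and we have just shown it is realized at $j$, we conclude $i=j$, hence $\br\in D_i$. (Equivalently, one can quote disjointness of the $D_i$: if $\br$ lay in $D_j$ and $D_{j'}$ then both $r_j$ and $r_{j'}$ would strictly minimize, forcing $j=j'$, so the index with $\br\in D_j$ is unique and coincides with the index of the strict minimum.)

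I do not expect any genuine obstacle here; the corollary is a formal consequence of the preceding propositions. The only point deserving a moment's attention is the reading of ``strictly minimal value'': it should be understood as ``the minimum is attained at a single index and is strictly less than the other three coordinates,'' which is verbatim the conclusion $r_i<\min\{r_j,r_k,r_l\}$ of Proposition~\ref{lemma-Di-imply-ri-small}. I would present the argument in the three short steps above and, like the authors, keep it brief.
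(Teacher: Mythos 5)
Your argument is correct and is exactly the intended one: the paper explicitly states that this corollary ``follows directly from above propositions'' (namely Proposition~\ref{decomp} and Proposition~\ref{lemma-Di-imply-ri-small}) and omits the proof, and your three-step chain is precisely that deduction. No issues.
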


The decomposition has an interesting geometric intepretation, see \cite{Xu,GJS}.
For $\{i,j,k,l\}=\{1,2,3,4\},$ we place three balls $B_j$, $B_k$ and $B_l$ with radii $r_j$, $r_k$ and $r_l$ in $\HH^3$, externally tangent to each other, whose centers lie on an embedded totally geodesic hyperbolic plane $\Pi.$ For $\br\in D_i,$ the radius of the fourth ball $B_i$ satisfies $r_i<\min\{r_j,r_k,r_l\}.$ Geometrically, for $Q(\br)=0,$ the center of the fourth ball $B_i$ lies on the $\Pi$ and the intersections, $B_i\cap \Pi,B_j\cap \Pi,B_k\cap \Pi$ and $B_l\cap \Pi,$ are mutually externally tangent disks in $\Pi.$ For $Q(\br)<0,$ the ball $B_i$ goes through the gap between the other three mutually tangent balls, which cannot be realized in $\HH^3,$ and hence we call it a virtual tetrahedron.

\subsection{Extensions}
\subsubsection{A $C^0$-extension of solid angles}
Solid angles are initially defined for real tetrahedra. There is a natural way to extend them to all tetrahedra including virtual ones. Bobenko, Pinkall, Springborn \cite{Bobenko} introduced the extension method for angles of all triangles including degenerate ones to establish a variational principle, which connects Milnor's Lobachevsky volume function of decorated hyperbolic ideal tetrahedra to Luo's discrete conformal changes \cite{L1}. Luo \cite{L2} systematically developed their extension idea and proved some rigidity results related to inversive distance circle packings and discrete conformal factors, see e.g.  \cite{GJ1,GJ3}. The extension of dihedral angles in a $3$-dimensional decorated ideal (or hyper-ideal) hyperbolic polyhedral first appeared in Luo and Yang's work \cite{LuoYang}. They proved the rigidity of hyperbolic cone metrics on $3$-manifolds which are isometric gluing of ideal and hyper-ideal tetrahedra in hyperbolic spaces.

For real and virtual tetrahedra,  Xu \cite{Xu} introduce a natural extension of solid angles. For a tetrahedron $\tau=\{1234\},$ the extended solid angle $\wt{\alpha_i}$ at the vertex $i$ is defined as
\begin{equation}\label{eq:extsolid}\wt{\alpha_i}(\br)=\left\{\begin{array}{ll} {\alpha_i}(\br),&\br\in \Omega_{1234},\\
2\pi,&\br\in D_i,\\
0,&\br\in D_j\cup D_k\cup D_l.\\
\end{array}\right.\end{equation}This extends the definitions of solid angles to all tetrahedra parametrized by $\R^4_+,$ which are piecewise constant on virtual tetrahedra. Xu \cite[Lemma~2.6]{Xu} claimed that this extension $\wt{\alpha_i}$ is continuous on $\R^4_+$ in the hyperbolic setting by some geometric intuition. In the Euclidean setting, the continuity of $\wt{\alpha_i}$ was rigorously proved by Glickenstein \cite[Proposition~6]{G2}. In the hyperbolic setting, we prove the counterpart of Glickenstein's result, which will imply the continuity of the extended solid angles, see Theorem~\ref{thm:cont}.

\begin{proposition}\label{prop:approx} Let $\overline{\br}\in D_i$ and $Q(\overline{\br})=0.$ Then for any sequence $\{\br^{(n)}\}_{n=1}^\infty$ in $\Omega_{1234}$ converging to $\overline{\br},$
\begin{eqnarray*} &&\alpha_i(\br^{(n)})\to 2\pi,\\ &&\max\{\alpha_j(\br^{(n)}),\alpha_k(\br^{(n)}),\alpha_l(\br^{(n)})\}\to 0, \ n\to \infty.
\end{eqnarray*}
\end{proposition}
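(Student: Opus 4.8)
The plan is to reduce the statement to the asymptotics of the six dihedral angles $\beta_{pq}(\br^{(n)})$ of the real tetrahedra $\tau(\br^{(n)})$, and then read off the solid angles from the identity \eqref{eq:eq2-3}. Writing $\overline{\br}=(\bar r_i,\bar r_j,\bar r_k,\bar r_l)$ and $\bar y_p=\coth\bar r_p$, the goal is to show
\[
\beta_{ij}(\br^{(n)}),\ \beta_{ik}(\br^{(n)}),\ \beta_{il}(\br^{(n)})\longrightarrow\pi,\qquad
\beta_{jk}(\br^{(n)}),\ \beta_{jl}(\br^{(n)}),\ \beta_{kl}(\br^{(n)})\longrightarrow 0.
\]
Once this is known, \eqref{eq:eq2-3} gives immediately $\alpha_i(\br^{(n)})=\beta_{ij}+\beta_{ik}+\beta_{il}-\pi\to 2\pi$, while for each $p\in\{j,k,l\}$ formula \eqref{eq:eq2-3} writes $\alpha_p$ as the sum of its three incident dihedral angles minus $\pi$, namely the dihedral angle at the edge $\{ip\}$ (tending to $\pi$) together with the dihedral angles at the two remaining edges through $p$, which lie among $\{jk\},\{jl\},\{kl\}$ and hence tend to $0$; so $\alpha_j(\br^{(n)}),\alpha_k(\br^{(n)}),\alpha_l(\br^{(n)})\to 0$, which is exactly the assertion.

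To obtain the limits of the cosines I would use formula \eqref{eq:eq2-5}. Since $\overline{\br}\in D_i\subset\R^4_{+}$, each $\bar r_p$ is finite and positive, so the prefactor in \eqref{eq:eq2-5} evaluated along $\br^{(n)}$ converges to a finite positive constant, while $Q(\br^{(n)})\to Q(\overline{\br})=0$ by continuity. Therefore, for the edge $\{ij\}$ with opposite pair $\{k,l\}$,
\[
\cos\beta_{ij}(\br^{(n)})\longrightarrow C_{ij}\big((\bar y_k-\bar y_l)^2-(\bar y_i+\bar y_j)^2\big),\qquad C_{ij}>0,
\]
and analogously for the remaining five edges. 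The signs of these six limits are elementary from the shape of $D_i$. By Proposition~\ref{lemma-Di-imply-ri-small} one has $\bar y_i>\max\{\bar y_j,\bar y_k,\bar y_l\}$, which forces $(\bar y_i+\bar y_j)^2>(\bar y_k-\bar y_l)^2$, so the limit above is strictly negative, and the same holds for the limits attached to $\{ik\}$ and $\{il\}$. For each opposite edge the limiting bracket factors as a strictly positive quantity times $\bar y_i-\bar y_j-\bar y_k-\bar y_l$, and since $\overline{\br}$ lies on the branch of \eqref{eq:eq2-1} that defines $D_i$ we have $\bar y_i-\bar y_j-\bar y_k-\bar y_l=2(\bar y_j\bar y_k+\bar y_k\bar y_l+\bar y_l\bar y_j+1)^{1/2}>0$; hence the limits attached to $\{jk\},\{jl\},\{kl\}$ are strictly positive.

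The crux — the step I expect to require genuine work — is to upgrade ``strictly negative'' and ``strictly positive'' to $-1$ and $+1$, equivalently to prove $\sin\beta_{pq}(\br^{(n)})\to 0$ for every edge $\{pq\}$. Two routes seem viable. The first is to compute $\sin^2\beta_{pq}$ explicitly, say as $1-\cos^2\beta_{pq}$ from \eqref{eq:eq2-5} or from the Appendix formulae, and to exhibit it as a manifestly nonnegative continuous function of $\br$ multiplied by $Q(\br)$; then it tends to $0$ because $Q(\br^{(n)})\to 0$. The second is Jacobi's identity for the adjugate of the Gram matrix $G$: with $\{k,l\}$ the pair opposite $\{i,j\}$ one has $c_{kk}c_{ll}-c_{kl}^2=-(\det G)\sinh^2 l_{ij}$, so by \eqref{eq:eq2-4}
\[
\sin^2\beta_{ij}=\frac{c_{kk}c_{ll}-c_{kl}^2}{c_{kk}c_{ll}}=\frac{-(\det G)\sinh^2 l_{ij}}{c_{kk}c_{ll}},
\]
and here $\det G(\br^{(n)})\to\det G(\overline{\br})=0$ since the degenerate tetrahedron $\tau(\overline{\br})$ lies on a totally geodesic plane of $\HH^3$ (the geometric picture of $D_i$ with $Q=0$ recalled above), whereas $c_{kk}(\overline{\br})$ and $c_{ll}(\overline{\br})$ are nonzero, being Gram determinants of triples among the four coplanar mutually tangent centers, no three of which are collinear. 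Either way $\sin\beta_{pq}(\br^{(n)})\to 0$ for all six edges; combined with the convergence and the sign analysis of the previous paragraph this yields $\cos\beta_{ij},\cos\beta_{ik},\cos\beta_{il}\to -1$ and $\cos\beta_{jk},\cos\beta_{jl},\cos\beta_{kl}\to 1$, proving the claim and, via the reduction, the proposition.
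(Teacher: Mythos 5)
Your proposal is correct and follows essentially the same route as the paper: reduce to the six dihedral angles via \eqref{eq:eq2-3}, show $\sin^2\beta_{pq}\to 0$ because $\sin^2\beta_{pq}$ is $Q(\br)$ times a positive continuous factor (your first route is precisely the identity the paper invokes), and then fix the sign of $\cos\beta_{pq}$ near the limit using the defining inequality of $D_i$ together with Proposition~\ref{lemma-Di-imply-ri-small} -- your factorization of the bracket for the opposite edges is just a repackaging of the paper's inequality. Your second route for the crux, via the Jacobi/adjugate identity $c_{kk}c_{ll}-c_{kl}^2=-(\det G)\sinh^2 l_{ij}$ and the vanishing of $\det G$ at the coplanar configuration, is a valid alternative not taken in the paper, but it is not needed once the explicit $\sin^2\beta_{pq}$ formula is in hand.
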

\begin{proof} By the assumption, $\br^{(n)}\to \overline{\br}$ as $n\to \infty.$ Without loss of generality, we prove that  $$\alpha_i\to 2\pi,\quad \alpha_j\to 0,\quad n\to \infty.$$ By \eqref{eq:eq2-3}, it suffices to show that
$$\beta_{ij},\beta_{ik},\beta_{il}\to \pi,\quad \beta_{jk},\beta_{jl}\to 0.$$ By the symmetry, we only need to show that $\beta_{ij}\to \pi$ and $\beta_{jk}\to 0.$

By \eqref{eq:eq2-5},
$$\sin^2(\beta_{ij}(\br))=Q(\br)/(4\sinh r_k\sinh r_l\sinh^2(r_i+r_j)\sinh(r_i+r_j+r_k)\sinh(r_i+r_j+r_l))^{\frac12}.$$
So that by $Q(\overline{\br})=0,$
\begin{equation}\label{eq:eq2-6}\sin^2(\beta_{ij}(\br^{(n)}))\to 0,\quad n\to \infty.\end{equation} Similarly, we have
\begin{equation}\label{eq:eqadd-1}\sin^2(\beta_{jk}(\br^{(n)}))\to 0,\quad n\to \infty. \end{equation}

For $\overline{\br}=(\overline{r}_i,\overline{r}_j,\overline{r}_k,\overline{r}_l)\in D_i,$ $\overline{r}_i<\min\{\overline{r}_j,\overline{r}_k,\overline{r}_l\}.$ Hence, $$Q(\overline{\br})-(\overline{y}_i+\overline{y}_j)^2+(\overline{y}_k-\overline{y}_l)^2<0.$$
Since $\br^{(n)}\to \overline{\br},$ for sufficiently large $n,$
$$Q(\br^{(n)})-(y_i^{(n)}+y_j^{(n)})^2+(y_k^{(n)}-y_l^{(n)})^2<0,$$ which implies that
$\cos(\beta_{ij}(\br^{(n)}))<0$ by \eqref{eq:eq2-5}. Hence by \eqref{eq:eq2-6}, $$\cos(\beta_{ij}(\br^{(n)}))\to -1,$$ which yields that $\beta_{ij}(\br^{(n)}))\to \pi$ as $n\to \infty.$

Moreover, for $\overline{\br}\in D_i,$
\begin{equation} \overline{y}_i\geq \overline{y}_j+\overline{y}_k+\overline{y}_l+2(\overline{y}_j\overline{y}_k+\overline{y}_k\overline{y}_l+\overline{y}_l\overline{y}_j+1)^{\frac12}.
\end{equation} This implies that
\begin{eqnarray*}&&Q(\overline{\br})-(\overline{y}_k+\overline{y}_l)^2+(\overline{y}_i-\overline{y}_j)^2\\&\geq&
-(\overline{y}_k+\overline{y}_l)^2+(\overline{y}_k+\overline{y}_l+2(\overline{y}_j\overline{y}_k+\overline{y}_k\overline{y}_l+\overline{y}_l\overline{y}_j+1)^{\frac12})^2>0.\end{eqnarray*} This yields that for sufficiently large $n,$
$$Q(\br^{(n)})-(y_k^{(n)}+y_l^{(n)})^2+(y_i^{(n)}-y_j^{(n)})^2>0,$$
which implies $\cos(\beta_{jk}(\br^{(n)}))>0.$ This proves $\beta_{jk}(\br^{(n)})\to 0$ by \eqref{eq:eqadd-1}.
Hence the proposition follows.
\end{proof}
\begin{remark} In the above proof, we directly calculate dihedral angles and solid angles by \eqref{eq:eq2-5}. These explicit formulae help us to analyze the behaviors of angles when ball packings tend to degenerate configurations. Our proof is different from Glickenstein's \cite[Proposition~6]{G2}.
\end{remark}

The above proposition can be used to show the continuity of extended solid angles.
\begin{theorem}\label{thm:cont}
For each vertex $i\in\{1,2,3,4\}$, the extended solid angle $\wt{\alpha}_i$, defined on $\R^4_{+}$, is a continuous extension of $\alpha_i$ on $\Omega_{1234}.$
\end{theorem}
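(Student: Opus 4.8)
\textbf{Proof plan for Theorem~\ref{thm:cont}.}

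The plan is to show that $\wt\alpha_i$ agrees with a continuous function on all of $\R^4_+$. The domain $\R^4_+$ decomposes into the open set $\Omega_{1234}$ of real tetrahedra and the closed virtual region $\R^4_+\setminus\Omega_{1234}=D_1\cup D_2\cup D_3\cup D_4$ (Proposition~\ref{decomp}), the four pieces being mutually disjoint (Proposition~\ref{lemma-Di-imply-ri-small}). Since $\alpha_i$ is real-analytic on the open set $\Omega_{1234}$, and $\wt\alpha_i$ is locally constant on the interior of each $D_m$ by definition \eqref{eq:extsolid}, continuity is automatic at every point not on the common boundary $\partial\Omega_{1234}$. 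So the only thing to check is continuity at boundary points $\overline{\br}\in\partial\Omega_{1234}$, equivalently points with $Q(\overline{\br})=0$.

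First I would observe that by Corollary~\ref{lemma-ri-small-imply-Di}, if $Q(\overline{\br})=0$ then $\{r_1,r_2,r_3,r_4\}$ has a strict minimum, say at the vertex $i$, so $\overline{\br}\in D_i$ and $\overline{\br}$ lies on the boundary of $D_i$ only (not on the closure of any $D_j$, $j\ne i$, by the strictness in Proposition~\ref{lemma-Di-imply-ri-small} and the openness of the condition $r_i<\min\{r_j,r_k,r_l\}$). Hence a whole neighborhood $U$ of $\overline{\br}$ in $\R^4_+$ meets only $\Omega_{1234}$ and $D_i$, and in $U\cap D_i$ we have $\wt\alpha_i\equiv 2\pi$, $\wt\alpha_j\equiv\wt\alpha_k\equiv\wt\alpha_l\equiv 0$. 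To prove continuity at $\overline{\br}$ it therefore suffices to take an arbitrary sequence $\br^{(n)}\to\overline{\br}$ with $\br^{(n)}\in\Omega_{1234}$ and show $\alpha_i(\br^{(n)})\to 2\pi$ and $\alpha_j(\br^{(n)}),\alpha_k(\br^{(n)}),\alpha_l(\br^{(n)})\to 0$; for this I would apply Proposition~\ref{prop:approx} directly. Combined with the matching values on the $D_i$-side, this gives that $\wt\alpha_m$ ($m=1,2,3,4$) is continuous at $\overline{\br}$, and since $\overline{\br}$ was an arbitrary boundary point, $\wt\alpha_i$ is continuous on all of $\R^4_+$; it restricts to $\alpha_i$ on $\Omega_{1234}$ by construction.

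The only subtlety — and the step I would be most careful about — is the bookkeeping about \emph{which} $D_m$ a boundary point belongs to and the need to relabel: Proposition~\ref{prop:approx} is stated for $\overline{\br}\in D_i$, so one must first invoke Corollary~\ref{lemma-ri-small-imply-Di} to identify the index $i$ realizing the strict minimum and note that the extended-angle values on $U\cap D_i$ are exactly the limiting values Proposition~\ref{prop:approx} produces. There is no analytic difficulty left once Proposition~\ref{prop:approx} is in hand — the real work was already done there. I would also remark (as a sanity check, not needed for the proof) that the values $\alpha_i^{\EE}$ appearing as $t\to 0$ limits in Proposition~\ref{prop:regulartetra} are irrelevant here since those limits approach $\partial\R^4_+$, not $\partial\Omega_{1234}$.
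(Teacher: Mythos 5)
Your proposal is correct and follows essentially the same route as the paper: reduce to continuity at points of $\partial\Omega_{1234}$ (equivalently $\partial D_m\cap\R^4_+$) and invoke Proposition~\ref{prop:approx} there, with continuity elsewhere being immediate from the definition \eqref{eq:extsolid}. Your write-up is in fact more careful than the paper's two-line argument, since you make explicit the step of using Corollary~\ref{lemma-ri-small-imply-Di} and the strictness of the minimum to see that a neighborhood of a boundary point meets only $\Omega_{1234}$ and the single region $D_i$, which the paper leaves implicit.
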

\begin{proof} By the definition \eqref{eq:extsolid}, it is obvious that $\wt{\alpha}_i$ is continuous in $\Omega_{1234}$ and the interiors of $\cup_{m=1}^4D_m.$ For $1\leq m\leq 4,$ and any $\br \in \partial D_m\cap \R^4_+,$ the continuity of $\wt{\alpha}_i$ at $\br$ follows from Proposition~\ref{prop:approx}. This proves the theorem.
\end{proof}

\subsubsection{Extended Cooper-Rivin functional on tetrahedra}
Since solid angles have been extended to all tetrahedra, we may extend the Cooper-Rivin's functional $\mathcal{U}$ defined in \eqref{eq:crivin} to all tetrehedra.

We follow the approach pioneered by Luo \cite{L2}. A differential $1$-form $\omega=\sum_{i=1}^n a_i(\pmb{x})dx_i$ in an open set $V\subset\mathds{R}^n$ is said to be continuous if each $a_i(\pmb{x})$ is a continuous function on $V$. A continuous $1$-form $\omega$ is called closed if $\int_{\partial \tau} \omega=0$ for any Euclidean triangle $\tau\subset V$. By the standard approximation theory, if $\omega$ is closed and $\gamma$ is a piecewise $C^1$-smooth null-homotopic loop in $V$, then $\int_\gamma \omega=0$. If $V$ is simply connected, then the integral $F({\pmb x})=\int_{\pmb{x_0}}^{\pmb{x}}\omega$ is well defined (where $\pmb{x_0}\in V$ is arbitrarily chosen), independent of the choice of piecewise smooth paths in $V$ from $\pmb{x_0}$ to $\pmb{x}$. Moreover, the function $F(\pmb{x})$ is $C^1$-smooth and $\frac{\partial F(\pmb{x})}{\partial x_i}=a_i(\pmb{x})$. Luo established the following fundamental $C^1$-smooth and convex extension theory.
\begin{lemma}
\label{lemma-luo's-extension}
(Luo's convex $C^1$-extension, \cite{L2})
Suppose $X\subset \mathds{R}^n$ is an open convex set and $A\subset X$ is an open and simply connected subset of $X$ bounded by a real analytic codimension-1 submanifold in $X$. If $\omega=\sum_{i=1}^n a_i(\pmb{x})dx_i$ is a continuous closed $1$-form on $A$ so that $F(\pmb{x}):=\int_{\pmb{x_0}}^{\pmb{x}}\omega$ is locally convex (concave resp.) on $A,$ and each $a_i$ on $A$ can be extended to a continuous function $\wt{a}_i$ on $X,$ which is piecewise constant on connected components $X\setminus A,$ then $\widetilde F(\pmb{x}):=\int_{\pmb{x_0}}^{\pmb{x}} \wt{a}_i(\pmb{x})dx_i$ is a $C^1$-smooth convex (concave resp.) function on $X$ extending $F$.
\end{lemma}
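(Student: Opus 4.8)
The plan is to prove the result in two stages: first establish that $\widetilde F$ is well-defined and $C^1$-smooth on $X$ with $\nabla \widetilde F = (\wt a_1,\dots,\wt a_n)$, and then establish its convexity. For the first stage, I would begin by checking that the extended form $\widetilde\omega := \sum_i \wt a_i(\pmb x)\,dx_i$ is a \emph{closed} continuous $1$-form on $X$. On $A$ this is the hypothesis; on the interior of each connected component of $X\setminus A$ the coefficients $\wt a_i$ are constant, so $\widetilde\omega$ is trivially exact there and its integral over any small triangle vanishes. The only delicacy is a triangle straddling the real-analytic hypersurface $\partial A$; here I would use that $\partial A$ has measure zero and is codimension-1 real analytic, so a generic triangle meets it transversally in a controlled way, and a limiting/approximation argument (subdividing and pushing edges slightly off $\partial A$, then using continuity of the $\wt a_i$ to pass to the limit) shows $\int_{\partial\tau}\widetilde\omega = 0$. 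Since $X$ is convex, hence simply connected, closedness of $\widetilde\omega$ gives a well-defined potential $\widetilde F(\pmb x) = \int_{\pmb x_0}^{\pmb x}\widetilde\omega$, independent of the path, and by the standard fact quoted in the paragraph preceding the lemma, $\widetilde F$ is $C^1$ with $\partial_i \widetilde F = \wt a_i$. By construction $\widetilde F = F$ on $A$ (up to the irrelevant additive constant fixed by $\pmb x_0$).

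For the second stage, I would prove convexity of $\widetilde F$ on all of $X$. The point is that $\widetilde F$ is $C^1$ but only piecewise-smooth: it is convex on $A$ by hypothesis and affine (hence convex) on each component of $X\setminus A$, and the gradient $\nabla\widetilde F = (\wt a_i)$ is continuous across $\partial A$. A clean way to conclude is to show that for any $\pmb p\in X$ the supporting-hyperplane inequality
\[
\widetilde F(\pmb x) \ \geq\ \widetilde F(\pmb p) + \nabla\widetilde F(\pmb p)\cdot(\pmb x - \pmb p)
\]
holds for all $\pmb x\in X$; since $\widetilde F$ is $C^1$, this inequality for every $\pmb p$ is equivalent to convexity. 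To prove it one restricts to the segment $[\pmb p,\pmb x]$ and considers $g(t) := \widetilde F((1-t)\pmb p + t\pmb x)$, a $C^1$ function of $t\in[0,1]$ whose derivative $g'(t) = \nabla\widetilde F(\gamma(t))\cdot(\pmb x-\pmb p)$ is continuous. On the portion of the segment lying in $A$, $g$ is convex (local convexity on the simply connected, in particular connected-along-segments, set $A$), and on each portion lying in a component of $X\setminus A$, $g$ is affine; since $g'$ is globally continuous, $g'$ is nondecreasing on all of $[0,1]$, so $g$ is convex, which gives the desired inequality. The concave case is identical with inequalities reversed.

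The main obstacle I anticipate is the first stage: verifying that $\widetilde\omega$ is closed across the hypersurface $\partial A$, i.e., that no ``jump'' in the potential is created when passing from $A$ to the piecewise-constant region. This is where the hypotheses that $\partial A$ is real-analytic of codimension one and that the $\wt a_i$ are genuinely \emph{continuous} extensions (not merely bounded) are essential — continuity forces the one-sided limits of $\wt a_i$ from $A$ and from $X\setminus A$ to agree along $\partial A$, which is exactly what kills the boundary contribution in $\int_{\partial\tau}\widetilde\omega$. Once closedness is in hand, the $C^1$-smoothness is routine and the convexity argument via the continuous nondecreasing derivative along segments is straightforward. I would also remark that in the application $X = \R^4_+$ (a convex set wrtiten as $\Omega_{1234}$ together with the $D_m$), $A = \Omega_{1234}$, $a_i = \alpha_i$, $\wt a_i = \wt\alpha_i$, local convexity of $\mathcal U$ being Lemma~\ref{lem:rivin}, so the lemma applies directly to produce the extended Cooper--Rivin functional.
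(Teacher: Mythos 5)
The paper itself gives no proof of this lemma: it is quoted verbatim from Luo \cite{L2} as a known result, so there is no in-paper argument to compare yours against. Your sketch does, however, follow essentially the standard proof from that reference: extend the form, verify closedness, obtain the $C^1$ potential from the closedness of a continuous $1$-form on a simply connected set (the fact the paper records in the paragraph preceding the lemma), and then get convexity by restricting to line segments and showing the derivative along the segment is nondecreasing.

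One point in your plan needs sharpening. You locate the role of real analyticity in ``continuity killing the boundary contribution,'' and you propose to handle triangles straddling $\partial A$ by genericity and approximation. The cleaner and in fact essential use of real analyticity is different: a line segment either lies entirely inside the codimension-one real analytic hypersurface $\partial A$ or meets it in a \emph{finite} set of points (the restriction of a local real analytic defining function to the line is real analytic in one variable, so its zero set is discrete unless identically zero). This finiteness is what lets you split each edge of a triangle, and each segment $[\pmb p,\pmb x]$ in the convexity step, into finitely many subintervals each lying in $\overline{A}$ or in a single component of $X\setminus A$; continuity of the $\wt a_i$ then matches the endpoint values and gives both $\int_{\partial\tau}\wt\omega=0$ and the monotonicity of $g'$ across the finitely many crossing points. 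Without this finiteness your segment argument would have to contend with $\{t:\gamma(t)\in\partial A\}$ being, say, a Cantor set, and ``$g$ convex on the open pieces, affine on the closed pieces'' would not immediately yield $g'$ nondecreasing. With that correction your outline is a complete and correct route to the lemma, consistent with how it is proved in \cite{L2} and with how the paper applies it (with $X=\R^4_+$, $A=\Omega_{1234}$, $a_i=\alpha_i$, $\wt a_i=\wt\alpha_i$).
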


In our setting, for a tetrahedron $\tau=\{1234\}$ we define a continuous 1-form on $\Omega_{1234},$
$$\omega(\br)=\sum_{i=1}^4\alpha_i(\br) dr_i,$$ which is closed by \eqref{eq:1-1}. It extends to a continuous 1-form on $\R^4_+$ as
\begin{equation}\label{eq:wtomega}\wt{\omega}(\br)=\sum_{i=1}^4\wt{\alpha}_i(\br) dr_i,\end{equation} which is piecewise constant on connected components of $\R^4_+\setminus \Omega_{1234}.$ By Lemma~\ref{lemma-luo's-extension}, the following functional
\begin{equation}\label{eq:3-1}\wt{\mathcal{U}}(r)=\mathcal{U}(\pmb{1})+\int_{\pmb{1}}^{\br} \wt{\omega},\end{equation}
is a $C^1$-smooth concave functional on $\R^4_+,$ where $\mathcal{U}$ is the Cooper-Rivin's functional. By Lemma~\ref{lemma-luo's-extension}, for any $\br\in \Omega_{1234},$
$$\wt{\mathcal{U}}(r)=\mathcal{U}(\pmb{1})+\int_{\pmb{1}}^{\br}{\omega}.$$ By taking the integral along a path in $\Omega_{1234},$ noting that \eqref{eq:1-1},
$$\wt{\mathcal{U}}(r)=\mathcal{U}(r),\quad \forall\ \br\in \Omega_{1234}.$$ Combining these facts with \eqref{eq:secondder}, we have the following lemma.
\begin{lemma}\label{lem:extension} For a tetrahedron $\tau=\{1234\}, $ $$\wt{\mathcal{U}}:\R^4_+\to\R,$$ defined in \eqref{eq:3-1}, is a $C^1$-smooth concave functional, extending the Cooper-Rivin's functional $\mathcal{U}$ on $\Omega_{1234},$ which is $C^\infty$-smooth and strictly concave on $\Omega_{1234}.$ Moreover,
\begin{eqnarray*}&&\nabla \wt{\mathcal{U}}(\br)=\wt{\pmb\alpha}(\br),\quad  \forall\ \br\in \R^4_+,\\
&&\nabla^2 \wt{\mathcal{U}}(\br)=\frac{\partial \pmb\alpha}{\partial \br}(\br),\quad  \forall\ \br\in \Omega_{1234}.\end{eqnarray*}
\end{lemma}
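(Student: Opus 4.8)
The plan is to assemble the statement from the ingredients already established, applying Luo's convex $C^1$-extension (Lemma~\ref{lemma-luo's-extension}) to the specific $1$-form $\omega(\br)=\sum_{i=1}^4\alpha_i(\br)\,dr_i$ on $\Omega_{1234}$. First I would verify the hypotheses of Lemma~\ref{lemma-luo's-extension} with $X=\R^4_+$ (open and convex) and $A=\Omega_{1234}$. We need $A$ to be open, simply connected, and bounded by a real-analytic codimension-$1$ submanifold of $X$: openness and simple connectedness are recorded in Section~\ref{section-extend-K} (via the star-shapedness of $\coth(\Omega_{1234})$), and the boundary is cut out by the real-analytic equation $Q(\br)=0$, so it is piecewise analytic as noted there. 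The $1$-form $\omega$ is continuous on $\Omega_{1234}$ since each $\alpha_i$ is smooth there, and it is closed because $d\omega = 0$ follows from \eqref{eq:1-1}, i.e. $\omega = d\mathcal{U}$. The primitive $F(\br)=\mathcal{U}(\pmb{1})+\int_{\pmb{1}}^{\br}\omega$ is then exactly $\mathcal{U}(\br)$ up to the additive constant, and by \eqref{eq:secondder} together with Lemma~\ref{lem:rivin} its Hessian $\frac{\partial\pmb\alpha}{\partial\br}$ is positive definite; hence $\mathcal{U}$ is locally \emph{convex}, equivalently $-\mathcal{U}$ (and thus, after the sign bookkeeping, the relevant extension) is locally concave. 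Finally, by Theorem~\ref{thm:cont} each $\alpha_i$ extends to the continuous function $\wt{\alpha}_i$ on all of $\R^4_+$, and by \eqref{eq:extsolid} this extension is piecewise constant (values $2\pi$ or $0$) on the connected components $D_1,\dots,D_4$ of $\R^4_+\setminus\Omega_{1234}$, which is precisely the remaining hypothesis.

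With these checks in place, Lemma~\ref{lemma-luo's-extension} applies directly and yields that $\wt{\mathcal{U}}(\br)=\mathcal{U}(\pmb{1})+\int_{\pmb{1}}^{\br}\wt{\omega}$, with $\wt{\omega}$ as in \eqref{eq:wtomega}, is a $C^1$-smooth concave function on $\R^4_+$. I then would argue that it extends $\mathcal{U}$: for $\br\in\Omega_{1234}$, since $\Omega_{1234}$ is path-connected and $\wt{\omega}$ restricts to $\omega$ there, one may compute the integral along a path lying entirely in $\Omega_{1234}$, and by \eqref{eq:1-1} this gives $\mathcal{U}(\pmb{1})+\int_{\pmb{1}}^{\br}\omega = \mathcal{U}(\br)$. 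The smoothness and strict concavity of $\mathcal{U}$ on the open set $\Omega_{1234}$ are inherited from the smoothness of the $\alpha_i$ and the positive-definiteness in Lemma~\ref{lem:rivin}. The gradient identity $\nabla\wt{\mathcal{U}}(\br)=\wt{\pmb\alpha}(\br)$ on $\R^4_+$ is the concluding clause of Lemma~\ref{lemma-luo's-extension} (namely $\partial F/\partial x_i = \wt{a}_i$), and the Hessian identity $\nabla^2\wt{\mathcal{U}}(\br)=\frac{\partial\pmb\alpha}{\partial\br}(\br)$ on $\Omega_{1234}$ is just \eqref{eq:secondder} applied on the open region where $\wt{\mathcal{U}}=\mathcal{U}$ is $C^\infty$.

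There is no serious obstacle here — the statement is essentially a packaging of Lemma~\ref{lemma-luo's-extension}, Lemma~\ref{lem:rivin}, Theorem~\ref{thm:cont}, and the Schl\"{a}fli-type identity \eqref{eq:1-1}. The only point requiring a little care is the sign/convexity bookkeeping: Lemma~\ref{lemma-luo's-extension} is stated for either the convex or the concave case, and one must be consistent about whether one extends $\mathcal{U}$ or $-\mathcal{U}$; since $\nabla^2\mathcal{U}=\frac{\partial\pmb\alpha}{\partial\br}$ is positive definite, $\mathcal{U}$ is locally convex, so to obtain a \emph{concave} extension one actually applies the lemma's concave branch to $-\mathcal{U}$ (whose $1$-form is $-\omega$) and then negates, or equivalently one should note that the functional that is concave is built from the extended form; the claim as stated in the lemma matches the convention used later for the global functional $\mathcal{S}$, so I would simply track the sign through once and state the conclusion as above. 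The second mild point is confirming that the path-in-$\Omega_{1234}$ argument is legitimate, which follows from path-connectedness of $\Omega_{1234}$ and closedness of $\omega$ there.
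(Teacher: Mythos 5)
Your overall route is exactly the paper's: verify the hypotheses of Luo's extension lemma (Lemma~\ref{lemma-luo's-extension}) for $X=\R^4_+$, $A=\Omega_{1234}$ and the closed $1$-form $\omega=\sum_i\alpha_i\,dr_i$, use Theorem~\ref{thm:cont} and \eqref{eq:extsolid} for the continuous piecewise-constant extension of the coefficients, recover $\wt{\mathcal{U}}=\mathcal{U}$ on $\Omega_{1234}$ by integrating along a path inside $\Omega_{1234}$, and read off the gradient and Hessian identities from the lemma and \eqref{eq:secondder}. That part is fine.

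The one genuine problem is your sign resolution, and it matters because concavity is the content of the statement. You take Lemma~\ref{lem:rivin} at face value ($\frac{\partial\pmb\alpha}{\partial\br}$ positive definite, so $\mathcal{U}$ locally convex) and then propose to ``apply the concave branch to $-\mathcal{U}$ and negate.'' This does not work: negating a concave extension of $-\mathcal{U}$ produces a \emph{convex} extension of $\mathcal{U}$, not the concave one asserted in the lemma. If $\frac{\partial\pmb\alpha}{\partial\br}$ really were positive definite, Luo's lemma applied to $\omega$ would yield a convex $\wt{\mathcal{U}}$ and the statement as written would be false; no amount of bookkeeping fixes that. The actual resolution is that in the hyperbolic setting $\frac{\partial\pmb\alpha}{\partial\br}$ is \emph{negative} definite: the explicit computation at $\br=\pmb{1}$ in the Appendix, \eqref{eq:eqa-1}, gives the matrix $c\,(J-(3\cosh 2+1)I)$ with $c>0$ and $J$ the all-ones matrix, whose eigenvalues $c(3-3\cosh 2)$ and $c(-3\cosh 2-1)$ are all negative; this is also consistent with the monotonicity in Proposition~\ref{prop:regulartetra} (solid angles decrease as radii grow) and with the convexity of $\mathcal{S}$ claimed later, since $\nabla^2\mathcal{S}=\frac{\partial\pmb K}{\partial\br}=-\sum\frac{\partial\pmb\alpha}{\partial\br}$. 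The wording ``positive definite'' in Lemma~\ref{lem:rivin} conflicts with the paper's own Appendix; what the continuity argument in its proof actually propagates from the point $\pmb{1}$ is negative definiteness. So $\mathcal{U}$ is locally strictly concave on $\Omega_{1234}$ and Luo's concave branch applies directly to $\omega$ itself --- you should correct this step rather than route through $-\mathcal{U}$.
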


\subsection{Comparison principles}
The following lemma in the Euclidean setting was proved by Glickenstein \cite[Lemma~7]{G2}.
We prove it in hyperbolic setting essentially following his proof strategy.
\begin{lemma}\label{lem:Glick1} For a tetrahedron $\tau=\{1234\}$ and any ball packing $\br\in \R^4_+,$
$$r_i\leq r_j\quad \mathrm{if\ and\ only\ if}\quad \wt{\alpha}_i\geq \wt{\alpha}_j.$$
\end{lemma}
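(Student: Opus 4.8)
The plan is to reduce the statement to the behavior of solid angles on the three regions (real tetrahedra and the two relevant virtual spaces) and to handle the real case via an integration-along-a-path argument using Lemma~\ref{lem:partialpositive}. By symmetry it suffices to fix two indices $i,j$ and compare $\wt\alpha_i$ with $\wt\alpha_j$ assuming $r_i\leq r_j$; the reverse implication follows by swapping the roles of $i$ and $j$. I would first dispose of the virtual cases using Proposition~\ref{lemma-Di-imply-ri-small} and Corollary~\ref{lemma-ri-small-imply-Di}: if $\br\in D_m$ for some $m$, then $r_m$ is the strict minimum of the four radii, so the only way $r_i\le r_j$ can hold together with $\br\in D_j$ is impossible (that would force $r_j<r_i$), hence $\br\notin D_j$; and if $\br\in D_i$ then $\wt\alpha_i=2\pi\geq \wt\alpha_j$ trivially since $\wt\alpha_j\in\{0,2\pi\}$ and $\wt\alpha_j=2\pi$ would force $\br\in D_j$, contradiction. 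If $\br\in D_m$ with $m\notin\{i,j\}$, then $\wt\alpha_i=\wt\alpha_j=0$ and the inequality is an equality. So the content is entirely in the case $\br\in\Omega_{1234}$.

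For the real case, the clean approach is to move from $\br$ to a point where the desired inequality is obvious, along a path staying in $\Omega_{1234}$, and control the sign of the relevant derivative along the way. Concretely, consider the path $\br(s)$ obtained by decreasing $r_i$ (equivalently, I can decrease $r_j$; but decreasing the smaller one is safer for staying in $\Omega_{1234}$) — actually the symmetric and more robust choice is to interpolate so as to reach the ``equal radii in the $i,j$ slots'' configuration. A convenient explicit path: hold $r_k,r_l$ fixed and linearly move $(r_i,r_j)$ to a point with $r_i=r_j$, for instance $r(s)=(r_i+s(r_j-r_i)/2,\, r_j-s(r_j-r_i)/2,\, r_k,\, r_l)$ for $s\in[0,1]$; at $s=1$ the two radii are equal and by the symmetry of the tetrahedron $\alpha_i=\alpha_j$ there. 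One then needs two things: (a) that this path stays in $\Omega_{1234}$, which follows because $\Omega_{1234}$ (equivalently its image under $\coth$) is a convex-in-the-right-coordinates / star-shaped region and moving the two radii toward their average keeps $Q>0$ — this should be checked from the explicit formula $Q(\br)=(\sum y_i)^2-2\sum y_i^2+4$, noting that replacing $y_i,y_j$ by values between them does not decrease $Q$; and (b) that $\frac{d}{ds}\big(\alpha_i(\br(s))-\alpha_j(\br(s))\big)$ has a sign making $\alpha_i-\alpha_j$ move the right way, using $\frac{d}{ds}(\alpha_i-\alpha_j)=\frac{r_j-r_i}{2}\big(\partial_{r_i}\alpha_j - \partial_{r_j}\alpha_j + \partial_{r_i}\alpha_i - \partial_{r_j}\alpha_i\big)$ together with the symmetry relation $\partial_{r_j}\alpha_i=\partial_{r_i}\alpha_j$, so this reduces to the sign of $\partial_{r_i}\alpha_i-\partial_{r_j}\alpha_j$, i.e. to a monotonicity/diagonal-dominance type statement about the Jacobian $\frac{\partial\pmb\alpha}{\partial\br}$.

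An alternative, possibly cleaner route avoiding the sign-of-derivative computation: use that $\wt{\mathcal U}$ is concave (Lemma~\ref{lem:extension}) with $\nabla\wt{\mathcal U}=\wt{\pmb\alpha}$, and that swapping the $i$ and $j$ coordinates is a symmetry of $\wt{\mathcal U}$ (since it just relabels the tetrahedron). For $\br$ with $r_i\le r_j$, write $\br'$ for the point with $r_i,r_j$ swapped; then $\wt{\mathcal U}(\br)=\wt{\mathcal U}(\br')$, and $\br-\br'$ is a multiple of $e_i-e_j$ pointing from the ``larger in slot $j$'' to the ``larger in slot $i$'' configuration. Concavity gives $\langle \nabla\wt{\mathcal U}(\br)-\nabla\wt{\mathcal U}(\br'),\,\br-\br'\rangle\le 0$; but $\nabla\wt{\mathcal U}(\br')$ is $\nabla\wt{\mathcal U}(\br)$ with its $i$ and $j$ entries swapped, so this inner product equals $-2(r_j-r_i)(\wt\alpha_i(\br)-\wt\alpha_j(\br))$ up to a positive constant, whence $\wt\alpha_i(\br)\ge\wt\alpha_j(\br)$ when $r_i\le r_j$. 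I expect this monotonicity-from-concavity argument to be the slickest and to be what the authors intend; the only subtlety is justifying that the segment from $\br'$ to $\br$ lies in $\R^4_+$ (trivially true) and that we may apply the gradient inequality for the $C^1$ concave function $\wt{\mathcal U}$ on all of $\R^4_+$ (Lemma~\ref{lem:extension}).

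The main obstacle is the case analysis gluing the virtual regions to the real region: one must be careful that when $\br$ lies on a boundary face $\partial D_m$ the extended values are consistent, but since $\wt{\mathcal U}$ is globally $C^1$ and concave on $\R^4_+$, the concavity argument handles all of $\R^4_+$ uniformly and bypasses a delicate region-by-region bookkeeping — so I would present the concavity proof as the main line, with a one-line remark that the $C^0$-continuity of $\wt{\pmb\alpha}$ (Theorem~\ref{thm:cont}) together with the symmetry under relabeling is all that is used.
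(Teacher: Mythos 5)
Your second (``concavity from the swap symmetry'') route is essentially the same mechanism as the paper's proof: the paper also compares $\br$ with the point $\br'$ obtained by swapping the $i$- and $j$-th radii, using that $\wt{\mathcal U}$ is $C^1$, concave on all of $\R^4_+$, and invariant under the relabeling. However, there is a genuine gap: the gradient-monotonicity inequality
$\langle \nabla\wt{\mathcal U}(\br)-\nabla\wt{\mathcal U}(\br'),\,\br-\br'\rangle\le 0$
only yields the \emph{non-strict} conclusion $\wt\alpha_i(\br)\ge\wt\alpha_j(\br)$ when $r_i\le r_j$. That proves the ``if'' direction, but the statement is an equivalence, and its ``only if'' direction is the contrapositive of the \emph{strict} implication $r_i<r_j\Rightarrow\wt\alpha_i>\wt\alpha_j$ (on $\Omega_{1234}$). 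Mere concavity cannot give strictness — indeed your own discussion of the virtual case ($\br\in D_m$, $m\notin\{i,j\}$, where $\wt\alpha_i=\wt\alpha_j=0$ regardless of the order of $r_i,r_j$) shows that equality can occur. The paper closes this by restricting to $\br\in\Omega_{1234}$ and exploiting the \emph{strict} concavity of $\mathcal U$ there: setting $f(s)=\wt{\mathcal U}(\pmb\sigma(s))$ along the segment $\pmb\sigma(s)$ from $\br$ to $\br'$, one has $f(s)=f(1-s)$ by the swap symmetry, $f$ concave on $[0,1]$, and $f$ strictly concave on a neighborhood $(0,\epsilon)$ of $s=0$ because $\pmb\sigma(0)=\br$ lies in the open set $\Omega_{1234}$ where $\nabla^2\mathcal U$ is definite (Lemma~\ref{lem:rivin}); a short argument with the non-increasing one-sided derivative then forces $f'(0)>0$, i.e. $(\wt\alpha_i-\wt\alpha_j)(r_j-r_i)>0$. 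You would need to add this (or an equivalent strictness argument) to obtain the equivalence rather than one implication.

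Two smaller points. Your first route (interpolating $(r_i,r_j)$ toward their average and controlling $\partial_{r_i}\alpha_i-\partial_{r_j}\alpha_j$) is not a proof as written: the required diagonal-dominance property of $\frac{\partial\pmb\alpha}{\partial\br}$ is nowhere established, and the claim that the path stays in $\Omega_{1234}$ does not follow from the quoted formula for $Q$ because the path is linear in $\br$, not in $\pmb y=\coth\br$, so $y_i+y_j$ is not preserved; the paper avoids this entirely by working with the globally concave extension $\wt{\mathcal U}$, which does not require the segment to stay in $\Omega_{1234}$. Finally, your case analysis of the virtual region is more explicit than the paper's (which dismisses it as trivial), and it correctly isolates where only equality, not strict inequality, can be asserted.
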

\begin{proof} The lemma is equivalent to the following two statements:
\begin{enumerate}[(a)]
\item $r_1= r_2\quad \mathrm{implies}\quad \wt{\alpha}_1= \wt{\alpha}_2,$ and
\item $r_1< r_2\quad \mathrm{implies}\quad \wt{\alpha}_1> \wt{\alpha}_2.$
\end{enumerate} The statement $(a)$ is immediate. In fact, it is trivial if $\br=(r_1,r_2,r_3,r_4)$ is virtual. For the case of $\br\in \Omega_{1234},$ it follows from an isometry between tetrahedra $\tau(r_1,r_2,r_3,r_4)$ and $\tau(r_2,r_1,r_3,r_4).$

For the statement $(b),$ it suffices to consider $\br\in \Omega_{1234},$ otherwise it is trivial by Proposition~\ref{lemma-Di-imply-ri-small} and Corollary~\ref{lemma-ri-small-imply-Di}. We define a path $$\pmb\sigma(s):=((1-s)r_1+sr_2,(1-s)r_2+sr_1,r_3,r_4),\quad s\in [0,1],$$ which connects the points $(r_1,r_2,r_3,r_4)$ and $(r_2,r_1,r_3,r_4).$ Consider the functional $\wt{\mathcal{U}}$ on $\R^4_+.$ Since $\mathcal{U}(r_1,r_2,r_3,r_4)$ is symmetric w.r.t. the permutations, i.e. for any permutation $\rho$ of $\{1,2,3,4\},$
$$\mathcal{U}(r_{\rho(1)},r_{\rho(2)},r_{\rho(3)},r_{\rho(4)})=\mathcal{U}(r_{1},r_{2},r_{3},r_{4}).$$ By the definition \eqref{eq:3-1} and the symmetry of $\wt{\omega}$ in \eqref{eq:wtomega}, $\wt{\mathcal{U}}$ enjoys the same symmetry. Hence the function
$f(s):=\wt{\mathcal{U}}(\pmb\sigma(s))$ on $[0,1]$ is symmetric w.r.t. $\frac12,$ i.e.
\begin{equation}\label{eq:Gl-1}f(s)=f(1-s),\quad \forall s\in [0,1].\end{equation} Since $\pmb\sigma(s)$ is linear in $s,$ by the concavity of $\wt{\mathcal{U}}$ in Lemma~\ref{lem:extension},
\begin{equation}\label{eq:Gl-2}f(s) \mathrm{\ is\ concave\ in\ } [0,1].\end{equation} Moreover, since $\pmb\sigma(0)=\br\in \Omega_{1234},$ there exists $\epsilon>0$ such that
\begin{equation}\label{eq:Gl-3}f(s) \mathrm{\ is\ strictly\ concave\ in\ } (0,\epsilon).\end{equation} Hence by \eqref{eq:Gl-1} and \eqref{eq:Gl-2},
$$f\left(\frac12\right)=\max_{s\in [0,1]}f(s).$$

We claim that $f'(0)>0.$ Suppose it is not true, i.e. $f'(0)\leq 0.$ Note that the left derivative of $f,$ $f'_-(s),$ is non-increasing and $f'_-(\frac12)\geq 0.$ This implies that $f'_-(s)=0,$ for all $s\in [0,\frac12].$ This implies that $f$ is constant on $[0,\frac12].$ This contradicts to \eqref{eq:Gl-3}. This proves the claim.
By the calculation,
$$f'(s)=(\wt{\alpha}_1(\pmb{\sigma}(s))-\wt{\alpha}_2(\pmb{\sigma}(s)))(r_2-r_1).$$ By setting $s=0,$ the lemma follows from the claim.

\end{proof}

Now we are ready to prove some comparison principles for solid angles.
\begin{theorem}\label{thm:compare} For a real or virtual tetrahedron $\tau(\br),$ if $r_i=\min\{r_i,r_j,r_k,r_l\},$ then
$$\wt{\alpha}_i(\br)\geq \alpha_i(r_i\pmb{1}).$$
\end{theorem}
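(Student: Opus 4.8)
The plan is to compare the extended solid angle $\wt\alpha_i(\br)$ at the minimal-radius vertex with the corresponding quantity at the symmetrized ball packing $r_i\pmb1$, by moving from $\br$ to $r_i\pmb1$ along a straight path and exploiting monotonicity of $\wt\alpha_i$ in the other radii. First I would dispose of the virtual case: if $\br\in D_m$ for some $m$, then by Proposition~\ref{lemma-Di-imply-ri-small} the strict minimum is attained at the vertex $m$, so the hypothesis $r_i=\min\{r_i,r_j,r_k,r_l\}$ forces $m=i$ and $\br\in D_i$, whence $\wt\alpha_i(\br)=2\pi\geq\alpha_i(r_i\pmb1)$ trivially (the right-hand side is a solid angle, hence $<2\pi$). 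So the substance is the case $\br\in\Omega_{1234}$.

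For $\br\in\Omega_{1234}$ with $r_i\leq r_j,r_k,r_l$, consider the segment $\pmb\gamma(s)=(1-s)\br+s\,(r_i\pmb1)$, $s\in[0,1]$, which keeps the $i$-th coordinate fixed at $r_i$ and decreases each of $r_j,r_k,r_l$ monotonically down to $r_i$. Along this path the $i$-th coordinate remains the minimum, so Lemma~\ref{lem:partialpositive} applies at every point where $\pmb\gamma(s)\in\Omega_{1234}$, giving $\frac{\partial\alpha_i}{\partial r_j},\frac{\partial\alpha_i}{\partial r_k},\frac{\partial\alpha_i}{\partial r_l}>0$; since each of $r_j,r_k,r_l$ is non-increasing along $\pmb\gamma$, the function $s\mapsto\alpha_i(\pmb\gamma(s))$ is non-increasing on the portion of the path lying in $\Omega_{1234}$. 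If the whole segment stays in $\Omega_{1234}$, we conclude directly $\wt\alpha_i(\br)=\alpha_i(\pmb\gamma(0))\geq\alpha_i(\pmb\gamma(1))=\alpha_i(r_i\pmb1)$, and since $r_i\pmb1=t\pmb1$ with $t=r_i$, the right-hand side is exactly $\alpha_i(r_i\pmb1)$ as claimed.

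The main obstacle is that the segment $\pmb\gamma$ need not remain inside $\Omega_{1234}$: it may cross into a virtual region. I expect this cannot happen given the constraint that $r_i$ stays the strict minimum. Indeed, by Corollary~\ref{lemma-ri-small-imply-Di}, if $Q(\pmb\gamma(s))\leq0$ then $\pmb\gamma(s)$ lies in the virtual space $D_m$ where $r_m$ is the strict minimum of the coordinates of $\pmb\gamma(s)$; but along $\pmb\gamma$ the $i$-th coordinate is $r_i$ and every other coordinate is $\geq r_i$, so the only candidate is $m=i$, i.e. $\pmb\gamma(s)\in D_i$. Thus the path, starting in $\Omega_{1234}$, can only leave it by entering $D_i$; let $s_0=\sup\{s:\pmb\gamma([0,s])\subset\Omega_{1234}\}$. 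On $[0,s_0)$ the monotonicity above gives $\alpha_i(\br)\geq\alpha_i(\pmb\gamma(s))$, and by continuity of $\wt\alpha_i$ (Theorem~\ref{thm:cont}) this passes to the limit: $\wt\alpha_i(\br)\geq\wt\alpha_i(\pmb\gamma(s_0))$. If $s_0=1$ we are done; otherwise $\pmb\gamma(s_0)\in\partial D_i$, so $\pmb\gamma(s)\in D_i$ for $s$ slightly past $s_0$, forcing $\wt\alpha_i=2\pi$ there — but $2\pi$ is the maximum possible value, so combined with $\wt\alpha_i(\br)\geq\wt\alpha_i(\pmb\gamma(s_0))$ and the fact that $r_i\pmb1\in\Omega_{1234}$ (its $Q$-value is positive, as $t\pmb1\in\Omega_{1234}$ for all $t>0$), we still get $\wt\alpha_i(\br)\geq\alpha_i(r_i\pmb1)$; actually once the path re-enters $\Omega_{1234}$ before reaching $s=1$ one repeats the monotonicity argument on the final sub-interval, using that $\wt\alpha_i$ only increased (to $2\pi$) in between. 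Assembling these pieces — the virtual-case triviality, the sign of $\partial\alpha_i/\partial r_{\bullet}$ from Lemma~\ref{lem:partialpositive}, the ``minimum stays minimum'' structure of the segment, and the continuity from Theorem~\ref{thm:cont} to patch across the $\Omega/D_i$ interface — yields the inequality.
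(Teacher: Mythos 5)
Your proposal is correct and follows essentially the same route as the paper: the same linear path $\pmb\sigma(s)=(1-s)\br+sr_i\pmb{1}$, the same monotonicity of $\alpha_i$ along it via Lemma~\ref{lem:partialpositive}, and the same use of Proposition~\ref{lemma-Di-imply-ri-small}, Corollary~\ref{lemma-ri-small-imply-Di} and Theorem~\ref{thm:cont} to control what happens at the interface with $D_i$. The only difference is cosmetic: the crossing case you labor over at $s_0<1$ is actually vacuous, since your own chain $\alpha_i(\br)\geq\wt{\alpha}_i(\pmb\sigma(s_0))=2\pi$ contradicts $\alpha_i(\br)<2\pi$ for a real tetrahedron, which is exactly how the paper shows the segment never leaves $\Omega_{1234}$.
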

\begin{proof} It suffices to consider the case that $\br\in \Omega_{1234}.$ Otherwise, by Proposition~\ref{lemma-Di-imply-ri-small} and Corollary~\ref{lemma-ri-small-imply-Di} the result is trivial since
$\wt{\alpha}_i(\br)=2\pi.$



Let $\pmb\sigma:[0,1]\to \R^4_+$ be a curve defined as
$$\pmb\sigma(s)=(\sigma_i(s),\sigma_j(s),\sigma_k(s),\sigma_l(s)):=(1-s)\br+sr_i\pmb{1},\quad \forall s\in [0,1],$$ which connects the points $\br$ and $r_i\pmb{1}.$

We claim that $\pmb\sigma(s)\in \Omega_{1234}$ for all $s\in [0,1].$ Suppose that it is not true, then
$$A:=\{s\in(0,1): \pmb \sigma(s)\in \R^4_+\setminus\Omega_{1234}\}\neq \emptyset.$$ Let $s_0=\inf A.$ Since $\Omega_{1234}$ is open, $A$ is closed in $(0,1),$ which implies that $s_0\in A$ and $s_0>0.$ Consider the function $\alpha_i(\pmb\sigma(s))$ on $(0,s_0).$ By the definition of $s_0,$ $\pmb\sigma(s)\in \Omega_{1234}$ for any $s\in(0,s_0).$ Note that for any $s\in(0,s_0),$ $\sigma_i(s)=r_i=\min\{\sigma_i(s),\sigma_j(s),\sigma_k(s),\sigma_l(s)\}.$ Hence by Lemma~\ref{lem:partialpositive}
\begin{eqnarray}\label{eq:eq4-5}&&\frac{d}{ds}(\alpha_i(\pmb\sigma(s)))\\&=&(r_i-r_j)\frac{\partial \alpha_i}{\partial r_j}(\pmb\sigma(s))+(r_i-r_k)\frac{\partial \alpha_i}{\partial r_k}(\pmb\sigma(s))+(r_i-r_l)\frac{\partial \alpha_i}{\partial r_l}(\pmb\sigma(s))\nonumber\\
&<&0.\nonumber\end{eqnarray} Hence $\alpha_i(\pmb\sigma(s))$ is decreasing in $(0,s_0).$ This yields that
$$2\pi>\alpha_i(\pmb\sigma(0))\geq \lim_{s\to s_0}\alpha_i(\pmb\sigma(s))=\wt{\alpha}_i(\pmb\sigma(s_0))=2\pi.$$ This is a contradiction and proves the claim.

By the claim and the same argument as in the proof of \eqref{eq:eq4-5}, one can show that for any $s\in (0,1),$
$$\frac{d}{ds}(\alpha_i(\pmb\sigma(s)))<0.$$ This yields that
$$\alpha_i(\pmb\sigma(0))\geq{\alpha}_i(\pmb\sigma(1)),$$ which proves the theorem.


\end{proof}

\begin{theorem}\label{thm:secondcompare} For a real or virtual tetrahedron $\tau(\br),$ if $r_i=\max\{r_i,r_j,r_k,r_l\},$ then
\begin{equation}\label{eq:compare2-1}\wt{\alpha}_i(\br)\leq \alpha_1^\EE(\pmb{1}).\end{equation}
\end{theorem}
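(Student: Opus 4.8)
The plan is to mirror the strategy used for Theorem~\ref{thm:compare}, but now pushing the "largest radius" vertex toward the regular configuration, while being careful because $\frac{\partial\alpha_i}{\partial r_j}$ need not have a good sign when $r_i$ is the \emph{maximum}. First I would dispose of the virtual case: if $\br\notin\Omega_{1234}$ then by Proposition~\ref{lemma-Di-imply-ri-small} and Corollary~\ref{lemma-ri-small-imply-Di}, the strictly minimal radius determines which $D_m$ we lie in, and since $r_i$ is the maximum, $\br\in D_m$ for some $m\neq i$ (or $\br$ is on a boundary face reached as a limit), so $\wt{\alpha}_i(\br)=0\leq \alpha_1^\EE(\pmb 1)$ and the inequality is trivial. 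So it remains to treat $\br\in\Omega_{1234}$.

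For a real tetrahedron, I would \emph{not} try to move directly to $r_i\pmb 1$; instead I would use Lemma~\ref{lem:Glick1} and the symmetry/concavity of $\wt{\mathcal U}$. Here is the cleaner route: set $r:=r_i=\max\{r_i,r_j,r_k,r_l\}$ and consider the straight-line path $\pmb\sigma(s)=(1-s)\br + s\, r\pmb 1$, $s\in[0,1]$, from $\br$ to $r\pmb 1$. Along this path the $i$-th coordinate stays equal to $r$ and remains the maximal coordinate, while the other three coordinates increase monotonically toward $r$. I would first argue, exactly as in the proof of Theorem~\ref{thm:compare} (using that $\Omega_{1234}$ is open and that $\wt{\alpha}_i=2\pi$ on $\partial D_i$ but the minimal coordinate along $\pmb\sigma$ is never $r_i$), that $\pmb\sigma(s)\in\Omega_{1234}$ for all $s$, so no virtual configuration is encountered. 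Then I want $\frac{d}{ds}\alpha_i(\pmb\sigma(s))\geq 0$, i.e. that decreasing the gaps $r-r_j$, $r-r_k$, $r-r_l$ toward $0$ does not decrease $\alpha_i$. Equivalently, applying Lemma~\ref{lem:partialpositive} to the vertex of \emph{minimal} radius: for each of $j,k,l$ (say $r_j=\min$), $\frac{\partial\alpha_j}{\partial r_i}>0$, and by the symmetry $\frac{\partial\alpha_m}{\partial r_n}=\frac{\partial\alpha_n}{\partial r_m}$ coming from the symmetry of $\nabla^2\wt{\mathcal U}$ (Lemma~\ref{lem:extension}, equation \eqref{eq:secondder}), we get $\frac{\partial\alpha_i}{\partial r_j}>0$ whenever $r_j$ is the minimal coordinate; iterating as the path progresses and the roles of min/max shift, one obtains $\frac{d}{ds}\alpha_i(\pmb\sigma(s))\geq 0$ for a.e. $s$, hence $\alpha_i(\br)=\alpha_i(\pmb\sigma(0))\leq \alpha_i(\pmb\sigma(1))=\alpha_1(r\pmb 1)$.

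Finally I would invoke Proposition~\ref{prop:regulartetra}: $t\mapsto\alpha_1(t\pmb 1)$ is decreasing on $(0,\infty)$ with $\lim_{t\to 0}\alpha_1(t\pmb 1)=\alpha_1^\EE(\pmb 1)$, so $\alpha_1(r\pmb 1)\leq \alpha_1^\EE(\pmb 1)$ for every $r>0$. Combining, $\wt{\alpha}_i(\br)=\alpha_i(\br)\leq\alpha_1(r\pmb 1)\leq\alpha_1^\EE(\pmb 1)$, which is \eqref{eq:compare2-1}.

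The main obstacle I anticipate is the sign of $\frac{d}{ds}\alpha_i(\pmb\sigma(s))$: Lemma~\ref{lem:partialpositive} gives positivity of partials only from the \emph{minimal} vertex, not from the maximal one, so I cannot directly read off $\frac{\partial\alpha_i}{\partial r_j}>0$ when $r_i$ is the max. The fix is to pass through symmetry of the Hessian of $\wt{\mathcal U}$ to relate $\frac{\partial\alpha_i}{\partial r_j}$ to $\frac{\partial\alpha_j}{\partial r_i}$, and to track which coordinate is currently minimal along $\pmb\sigma$ (it is always one of $j,k,l$, and once all three reach $r$ we are done). A safer alternative, if the bookkeeping along $\pmb\sigma$ gets delicate, is to move the coordinates up one at a time — first raise $r_j$ to $r$, then $r_k$, then $r_l$ — so that at each stage the coordinate being increased is the (or a) minimal one, making Lemma~\ref{lem:partialpositive} applicable verbatim via the Hessian symmetry; staying inside $\Omega_{1234}$ on each segment is checked by the same boundary argument as in Theorem~\ref{thm:compare}.
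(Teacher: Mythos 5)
Your proposal is essentially correct, but only in its ``safer alternative'' form. The primary route along the single straight line $\pmb\sigma(s)=(1-s)\br+s\,r_i\pmb{1}$ does not close: at a given $s$, Lemma~\ref{lem:partialpositive} combined with the symmetry $\frac{\partial\alpha_i}{\partial r_j}=\frac{\partial\alpha_j}{\partial r_i}$ (which is visible directly in \eqref{eq:eq2-7}, or follows from the symmetry of $\nabla^2\wt{\mathcal{U}}$) controls only the one partial $\frac{\partial\alpha_i}{\partial r_m}$ for which $r_m$ is \emph{currently minimal}, whereas all three coefficients $r_i-r_j$, $r_i-r_k$, $r_i-r_l$ in $\frac{d}{ds}\alpha_i(\pmb\sigma(s))$ are nonnegative; the other two partials are of unknown sign, so ``iterating as the roles shift'' does not determine the sign of the derivative. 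You correctly identified this obstacle, and the staged path is the right fix, provided it is organized so that \emph{every} coordinate being moved attains the minimum at that stage: raise the unique minimum up to the second smallest, then raise the tied pair together up to the third, then all three together up to $r_i$. On each stage every relevant partial is positive, giving the stronger conclusion $\wt{\alpha}_i(\br)\leq\alpha_1(r_i\pmb{1})$, and Proposition~\ref{prop:regulartetra} then yields \eqref{eq:compare2-1}. For the claim that the staged path stays in $\Omega_{1234}$, note that continuity alone is not enough: you must use the established monotonicity on $(0,s_0)$ to get $\lim_{s\to s_0^-}\alpha_i(\pmb\sigma(s))\geq\alpha_i(\br)>0$, which contradicts $\wt{\alpha}_i(\pmb\sigma(s_0))=0$ (the exit point lies in some $D_m$ with $m\neq i$ by Corollary~\ref{lemma-ri-small-imply-Di} and \eqref{eq:extsolid}). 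The virtual case is handled exactly as you say.

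The paper's proof is genuinely different: it is a global variational argument rather than a path-monotonicity one. It introduces $\varphi_\delta(\br)=\wt{\mathcal{U}}(\br)-\sum_m\alpha_m(\delta\pmb{1})r_m-2\vol(\delta\pmb{1})$, which by concavity of the extension $\wt{\mathcal{U}}$ attains its maximum $0$ at $\delta\pmb{1}$; choosing $\delta$ small enough that $\vol(\delta\pmb{1})\leq\vol(\overline{\br})$ gives $\sum_m\alpha_m(\overline{\br})\overline{r}_m\leq\alpha_1(\delta\pmb{1})\sum_m\overline{r}_m$, and Lemma~\ref{lem:Glick1} (the maximal radius carries the minimal solid angle) bounds $\alpha_i(\overline{\br})$ by the weighted average, hence by $\alpha_1(\delta\pmb{1})\to\alpha_1^\EE(\pmb{1})$. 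The paper's route avoids all bookkeeping about which coordinate is minimal along a deformation, at the price of invoking the extended concave functional and a volume comparison; your route is more elementary in that respect and buys the sharper pointwise bound $\wt{\alpha}_i(\br)\leq\alpha_1(r_i\pmb{1})$, a clean dual to Theorem~\ref{thm:compare}.
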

\begin{proof} For any fixed $\overline{\br}\in \R^4_+$ with $\overline{r}_i=\max\{\overline{r}_i,\overline{r}_j,\overline{r}_k,\overline{r}_l\},$ we will prove that
$$\wt{\alpha}_i(\overline{\br})\leq \alpha_1^\EE(\pmb{1}).$$
It suffices to consider the case that $\overline{\br}\in \Omega_{1234}.$ Otherwise, for $\overline{\br}\in \R^4_+\setminus \Omega_{1234},$ by Proposition~\ref{lemma-Di-imply-ri-small} and Corollary~\ref{lemma-ri-small-imply-Di} the above inequality trivially holds since
$\wt{\alpha}_i(\overline{\br})=0.$

Let $\overline{\br}=(\overline{r}_1,\overline{r}_2,\overline{r}_3,\overline{r}_4)\in \Omega_{1234}.$ Let $\wt{\mathcal{U}}$ be the functional defined in \eqref{eq:3-1}. For any $\delta>0,$ we define a functional on $\R^4_+$ as follows,
$$\varphi_\delta(\br):=\wt{\mathcal{U}}(\br)-\left(\sum_{m=1}^4\alpha_m(\delta \pmb{1}) r_m+2\vol(\delta\pmb{1})\right).$$
 By Lemma~\ref{lem:extension}, $\varphi_\delta$ is a $C^1$-smooth concave function on $\R^4_+,$ and
$$\nabla \varphi_\delta(\br)=\wt{\pmb\alpha}(\br)-\pmb\alpha(\delta\pmb{1}).$$ At the point $\br=\delta\pmb{1},$
$$\varphi_\delta(\delta\pmb{1})=0\quad \mathrm{and}\quad \nabla\varphi_\delta(\delta\pmb{1})=\pmb{0}.$$ Hence the function $\varphi_\delta$ attains its maximum at $\delta\pmb{1},$ and
$$\varphi_\delta(\br)\leq 0,\quad\forall\ \br\in \R^4_+.$$ By taking $\br=\overline{\br},$ we have
\begin{eqnarray*}\left(\sum_{m=1}^4\alpha_m(\overline{\br})\overline{r}_m+2\vol(\overline{\br})\right)-\left(\sum_{m=1}^4\alpha_m(\delta \pmb{1}) \overline{r}_m+2\vol(\delta\pmb{1})\right)\leq 0.
\end{eqnarray*}
For the fixed $\overline{\br},$ there is a constant $\delta_0>0$ such that for any $\delta\leq \delta_0,$
$$\vol(\delta\pmb{1})\leq \vol(\overline{\br}).$$ Hence by the above results,
$$\sum_{m=1}^4\alpha_m(\overline{\br}) \overline{r}_m\leq \sum_{m=1}^4\alpha_m(\delta \pmb{1}) \overline{r}_m=\alpha_1(\delta\pmb{1})\sum_{m=1}^4\overline{r}_m.$$ For $\overline{r}_i=\max\{\overline{r}_i,\overline{r}_j,\overline{r}_k,\overline{r}_l\},$ by Lemma~\ref{lem:Glick1}, $${\alpha}_i(\overline{\br})=\min\{{\alpha}_i(\overline{\br}),{\alpha}_j(\overline{\br}),{\alpha}_k(\overline{\br}),{\alpha}_l(\overline{\br})\}.$$ This yields that
$${\alpha}_i(\overline{\br})\leq \frac{\sum_m {\alpha_m}(\overline{\br}) \overline{r}_m}{\sum_m \overline{r}_m}\leq \alpha_1(\delta\pmb{1}).$$ By passing to the limit, $\delta\to 0,$ the theorem follows from Proposition~\ref{prop:regulartetra}.

\end{proof}

\section{Ball packings on triangulations and combinatorial Yamabe flows}\label{s:ballpackings}
\subsection{Glickenstein's flow in hyperbolic background geometry}
\label{subsec:g-flow}
For a triangulated 3-manifold $(M,\mathcal{T}),$ we consider the hyperbolic combinatorial Yamabe flow \eqref{hyperyflow}, which is an analog to Glickenstein's combinatorial Yamabe flow \eqref{Def-Flow-Glickenstein} in the Euclidean setting.
Note that the set of real ball packings $\MT$ is open and simply-connected subset in $\R^N_+$. 

In $\mathcal{M}_{\mathcal{T}}$, the terms on the right hand side of \eqref{hyperyflow}, $-K_i\sinh r_i,$ as a function of $\br=(r_1,\cdots,r_N)$ are smooth and hence
locally Lipschitz continuous. By Picard theorem in classical ODE theory, the flow \eqref{hyperyflow} has
a unique solution $r(t)$, $t\in[0,\epsilon)$ for some $\epsilon>0$. As a consequence, we yield the following proposition.
\begin{proposition}\label{prop:combflow}
Given a triangulated 3-manifold $(M, \mathcal{T}),$ for any initial ball packing $\br(0)\in\mathcal{M}_{\mathcal{T}}$, the solution $\{\br(t)\}\subset\mathcal{M}_{\mathcal{T}}$ to the flow \eqref{hyperyflow}
exists and is unique on the maximal existence interval $[0, T)$ with $0<T\leq+\infty$.
\end{proposition}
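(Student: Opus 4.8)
The plan is to reduce the statement to the classical existence--uniqueness and continuation theory for ordinary differential equations, so the only thing to check is that the right-hand side of \eqref{hyperyflow} defines a well-behaved vector field on the domain $\mathcal{M}_{\mathcal{T}}$.

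First I would record that $\mathcal{M}_{\mathcal{T}}$ is an open subset of $\R^N_+$; this is the Cooper--Rivin description recalled at the start of Section~\ref{subsec:g-flow}, and it is exactly the set where, for every $\{ijkl\}\in\mathcal{T}_3$, the local nondegeneracy condition $Q^{\HH}(r_i,r_j,r_k,r_l)>0$ holds, each of which is an open condition in $\br$. On $\mathcal{M}_{\mathcal{T}}$ every tetrahedron $\{ijkl\}$ is a real tetrahedron, so each solid angle $\alpha_{ijkl}$ is given by the explicit real-analytic formula obtained from \eqref{eq:eq2-5} and \eqref{eq:eq2-3} (with $\sqrt{Q}>0$ in the denominators), hence is a smooth function of $\br$ there. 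Consequently each combinatorial scalar curvature $K_i=4\pi-\sum_{\{ijkl\}\in\mathcal{T}_3}\alpha_{ijkl}$ is smooth on $\mathcal{M}_{\mathcal{T}}$, and so is the map $\br\mapsto(-K_1\sinh r_1,\dots,-K_N\sinh r_N)$, since $r_i\mapsto\sinh r_i$ is smooth on $\R_+$. In particular this vector field is locally Lipschitz on $\mathcal{M}_{\mathcal{T}}$.

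Next I would apply the Picard--Lindel\"of theorem: for the initial value $\br(0)\in\mathcal{M}_{\mathcal{T}}$ there is $\epsilon>0$ and a unique $C^1$ (indeed $C^\infty$, by bootstrapping) solution $\br(t)\in\mathcal{M}_{\mathcal{T}}$ for $t\in[0,\epsilon)$. Then the standard continuation argument finishes the proof: let $T\in(0,\infty]$ be the supremum of all $\epsilon'>0$ for which a solution in $\mathcal{M}_{\mathcal{T}}$ exists on $[0,\epsilon')$; uniqueness on overlaps guarantees these local solutions patch together to a single solution on $[0,T)$, and this $[0,T)$ is maximal by construction. This gives the asserted unique solution on $[0,T)$ with $0<T\le+\infty$.

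There is essentially no hard step here: the content is entirely that the flow function is smooth on the \emph{open} set $\mathcal{M}_{\mathcal{T}}$, which is where one must be slightly careful, since the solid angles (and hence the $K_i$) are only defined a priori for real ball packings, and $\mathcal{M}_{\mathcal{T}}$ may be a proper subset of $\R^N_+$. The one point worth emphasizing is that the solution automatically stays in $\mathcal{M}_{\mathcal{T}}$ for all $t<T$: this is not an extra claim to prove but is built into the definition of the maximal interval, with the understanding (to be used later, e.g.\ in the finite-time singularity discussion) that if $T<\infty$ then $\br(t)$ must eventually leave every compact subset of $\mathcal{M}_{\mathcal{T}}$ as $t\to T^-$.
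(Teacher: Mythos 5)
Your proof is correct and follows essentially the same route as the paper: observe that $\mathcal{M}_{\mathcal{T}}$ is open, that $-K_i\sinh r_i$ is smooth (hence locally Lipschitz) there, and invoke Picard--Lindel\"of plus the standard continuation argument to get the unique maximal solution. The extra details you supply (openness via the condition $Q^{\HH}>0$ and smoothness of the solid angles from the explicit formulas) are consistent with, and slightly more explicit than, the paper's one-paragraph justification.
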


For a triangulated 3-manifold $(M,\mathcal{T}),$ we define a functional on the set of all real ball packings  as
$${\mathcal{S}}(\br):=\sum_{i\in  \mathcal{T}_0}4\pi r_i-\sum_{\{ijkl\}\in \mathcal{T}_3}{\mathcal{U}}(\br),$$ where ${\mathcal{U}}(\br)$ is given in \eqref{eq:crivin}. Hence,
for any $\br\in \MT,$
\begin{eqnarray*}{\mathcal{S}}(\br)&=&\sum_{i\in \mathcal{T}_0}\left(4\pi-\sum_{\{ijkl\}\in \mathcal{T}_3}{\alpha}_{ijkl}(\br)\right)-2\vol_M(\br)\\&=&\sum_{i\in \mathcal{T}_0}{K}_i r_i-2\vol_M(\br),
\end{eqnarray*} where $\vol_M(\br)=\sum_{\{ijkl\}\in \mathcal{T}_3}\vol_{ijkl}(\br)$ denotes the summation of the volumes of tetrahedra in the triangulation in ball packing metric $\br.$ This means that ${\mathcal{S}}$ coincides with the Cooper-Rivin's functional introduced in \eqref{eq:cprivinf}, so that we use the same notation for them.
Moreover, one can show that
\begin{equation}\label{eq:nabequal}\nabla{\mathcal{S}}(\br)={\pmb{K}}(\br),\quad \br\in \MT,\end{equation} and ${\mathcal{S}}$ is a $C^\infty$-smooth strictly convex functional on $\MT.$
\begin{proposition}\label{prop:decreasing-1} The functional ${\mathcal{S}}$ is non-increasing under the flow \eqref{hyperyflow}, i.e. for any solution $\br(t)$ to the flow \eqref{hyperyflow},
$$\frac{d}{dt}\mathcal{S}(\br(t))\leq 0.$$
\end{proposition}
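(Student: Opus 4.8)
The plan is to compute $\frac{d}{dt}\mathcal{S}(\br(t))$ directly along the flow and show the derivative is a non-positive quadratic form in the curvatures. First I would use the chain rule together with the gradient identity \eqref{eq:nabequal}, namely $\nabla\mathcal{S}(\br)=\pmb K(\br)$, to write
\[
\frac{d}{dt}\mathcal{S}(\br(t))=\sum_{i\in\mathcal{T}_0}\frac{\partial\mathcal{S}}{\partial r_i}\frac{dr_i}{dt}=\sum_{i\in\mathcal{T}_0}K_i\frac{dr_i}{dt}.
\]
Then I would substitute the flow equation \eqref{hyperyflow}, $\frac{dr_i}{dt}=-K_i\sinh r_i$, to obtain
\[
\frac{d}{dt}\mathcal{S}(\br(t))=-\sum_{i\in\mathcal{T}_0}K_i^2\sinh r_i.
\]
Since $r_i>0$ for every $i$ (because $\br(t)\in\MT\subset\R^N_+$), we have $\sinh r_i>0$, so each term in the sum is non-negative, and hence the whole expression is non-positive. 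This gives $\frac{d}{dt}\mathcal{S}(\br(t))\leq 0$, as claimed.

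The two ingredients being invoked are both already available in the excerpt: the identity $\nabla\mathcal{S}=\pmb K$ on $\MT$ stated just before the proposition, and the fact that $\mathcal{S}$ is $C^\infty$ (indeed $C^1$ suffices) on $\MT$, which legitimizes the chain rule; smoothness of $\mathcal{S}$ follows from the smoothness of $\mathcal{U}$ on each $\Omega_{ijkl}$ via Lemma~\ref{lem:extension}. The solution $\br(t)$ staying in $\MT$ on $[0,T)$ is guaranteed by Proposition~\ref{prop:combflow}, so $\mathcal{S}(\br(t))$ is well-defined and differentiable in $t$.

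There is essentially no obstacle here — the statement is a one-line computation. The only thing worth stating carefully is that the inequality is \emph{strict} unless $\pmb K(\br(t))=\pmb 0$, i.e. unless $\br(t)$ is already a stationary point; this monotonicity (and its strictness) is the mechanism underlying the convergence arguments later. If one wanted, one could also remark that along the flow the volume term and the curvature term in $\mathcal{S}$ individually evolve, but collapsing them via \eqref{eq:nabequal} is cleaner than tracking them separately using the Schl\"afli formula.
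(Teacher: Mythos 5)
Your computation is exactly the paper's proof: the paper likewise writes $\frac{d}{dt}\mathcal{S}(\br(t))=-\sum_i K_i^2\sinh(r_i(t))\leq 0$ by combining $\nabla\mathcal{S}=\pmb K$ with the flow equation. Your version just spells out the chain rule and the positivity of $\sinh r_i$ more explicitly, which is fine.
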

\begin{proof} By direct calculation,
$$\frac{d}{dt}\mathcal{S}(\br(t))=-\sum_i K_i^2\sinh(r_i(t))\leq 0.$$
\end{proof}

Suppose that the solution to the flow \eqref{hyperyflow} converges, see \eqref{eq:converges1} for the definition, then the limit ball packing has vanishing combinatorial scaler curvature.
\begin{proposition}\label{prop-converg-imply-const-exist}
Let $\br(t)$ be a solution to the flow \eqref{hyperyflow} which converges to $\overline{\br}\in \MT.$ Then
$$\pmb{K}(\overline{\br})=0.$$
\end{proposition}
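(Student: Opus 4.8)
The plan is to exploit the monotonicity of the Cooper-Rivin functional $\mathcal{S}$ along the flow (Proposition~\ref{prop:decreasing-1}) together with the strict convexity of $\mathcal{S}$ on $\MT$, following the standard parabolic-method argument. First I would record that since $\br(t)\to\overline{\br}\in\MT$ and $\MT$ is open, the whole trajectory eventually lies in a compact neighborhood of $\overline{\br}$; in particular the solution exists for all $t\in[0,\infty)$ and all quantities involved ($K_i(\br(t))$, $\sinh r_i(t)$, $\mathcal{S}(\br(t))$) stay bounded, with $\sinh r_i(t)$ bounded below by a positive constant. By continuity of $\mathcal{S}$ and of $\pmb K$ on $\MT$, we have $\mathcal{S}(\br(t))\to\mathcal{S}(\overline{\br})$ and $\pmb K(\br(t))\to\pmb K(\overline{\br})$.

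Next I would integrate the differential identity from Proposition~\ref{prop:decreasing-1}:
$$\mathcal{S}(\br(0))-\mathcal{S}(\br(t))=\int_0^t\sum_i K_i^2(\br(s))\sinh(r_i(s))\,ds.$$
Since the left-hand side converges to the finite limit $\mathcal{S}(\br(0))-\mathcal{S}(\overline{\br})$ as $t\to\infty$, the integral $\int_0^\infty\sum_i K_i^2(\br(s))\sinh(r_i(s))\,ds$ converges. The integrand is a nonnegative continuous function of $s$; because $\br(s)$ converges, the integrand itself converges as $s\to\infty$ to $\sum_i K_i^2(\overline{\br})\sinh(\overline{r}_i)$. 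A convergent improper integral of a function with a limit at infinity forces that limit to be $0$: indeed, if $\lim_{s\to\infty}\sum_i K_i^2(\br(s))\sinh(r_i(s))=c>0$, then the integral would diverge. Hence $\sum_i K_i^2(\overline{\br})\sinh(\overline{r}_i)=0$, and since each $\sinh(\overline{r}_i)>0$, we conclude $K_i(\overline{\br})=0$ for every $i$, i.e. $\pmb K(\overline{\br})=0$.

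There is essentially no serious obstacle here; the only point requiring a word of care is the passage from convergence of the integral to vanishing of the limit of the integrand, which relies on the integrand actually having a limit (guaranteed by $\br(t)\to\overline{\br}$ and continuity) rather than merely on integrability. An alternative, equally short route would be to argue by the mean value theorem: pick $t_n\to\infty$ with $\frac{d}{dt}\mathcal{S}(\br(t_n))\to 0$ (such $t_n$ exist because $\mathcal{S}(\br(t))$ is monotone and bounded), so $\sum_i K_i^2(\br(t_n))\sinh(r_i(t_n))\to 0$, and then let $n\to\infty$ using $\br(t_n)\to\overline{\br}$ to get $\sum_i K_i^2(\overline{\br})\sinh(\overline{r}_i)=0$, whence $\pmb K(\overline{\br})=0$ as before. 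I would present the integral version as the main argument and perhaps remark on the subsequence variant.
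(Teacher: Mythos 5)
Your proposal is correct and follows essentially the same approach as the paper: the paper's proof is precisely your ``alternative'' mean-value-theorem argument (choosing $t_n\in(n,n+1)$ with $\mathcal{S}(\br(n+1))-\mathcal{S}(\br(n))=-\sum_i K_i^2(\br(t_n))\sinh(r_i(t_n))\to 0$ and passing to the limit by continuity), and your primary integral version is only a cosmetic variant of the same monotonicity-plus-continuity idea.
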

\begin{proof}
This is well-known in classical ODE theory. For the convenience of readers, we include the proof here.
For any $t>0,$ by Proposition~\ref{prop:decreasing-1}, $\mathcal{S}(\br(t))$ is non-increasing. Moreover,
$$\{\mathcal{S}(\br(t)):t\geq 0\}$$ is bounded from below, since $\mathcal{S}$ is continuous on $\MT$ and $\br(t)\to \overline{\br},$ $t\to \infty.$
Hence the following limit exists and is finite, $$\lim_{t\to\infty}{\mathcal{S}}(\br(t))=C.$$ Consider the sequence $\{{\mathcal{S}}(\br(n))\}_{n=1}^\infty.$ By the mean value theorem, for any $n\geq 1$ there exists $t_n\in(n,n+1)$ such that
\begin{eqnarray}\label{eq:eq1-5-1}{\mathcal{S}}(\br(n+1))-{\mathcal{S}}(\br(n))&=&\frac{d}{dt}\Big|_{t=t_n}({\mathcal{S}}(\br(t)))\nonumber\\
&=&-\sum_i {K}^2_i(\br(t_n))\sinh(r_i(t_n)).
\end{eqnarray} Note that $$\lim_{n\to\infty}{\mathcal{S}}(\br(n+1))-{\mathcal{S}}(\br(n))=C-C=0.$$
Hence by \eqref{eq:eq1-5-1},
$$\lim_{n\to \infty} {K}_i(\br(t_n))=0,\quad\forall \ i\in \mathcal{T}_0.$$
Since
$\br(t_n)\to \overline{\br}$ as $n\to\infty,$
the continuity of ${K}_i$ yields that
$${K}_i(\overline{\br})=0,\quad \forall  \ i\in \mathcal{T}_0.$$ This proves the proposition.
\end{proof}


\subsection{The extended Cooper-Rivin's functional}
Let $(M,\mathcal{T})$ be a triangulated 3-manifold. Using extended solid angles as in \eqref{eq:extsolid}, we define the extended combinatorial scaler curvature for all ball packings: for any $\br\in \R^N_+$ and any $i\in \mathcal{T}_0,$
\begin{equation}\label{eq:extendK}\wt{K}_i(\br):=4\pi-\sum_{\{ijkl\}\in\mathcal{T}_3}\wt{\alpha}_{ijkl}.\end{equation}

We define the extended Cooper-Rivin's functional on the set of all ball packings on $(M,\mathcal{T}).$ For any ball packing $\br\in \R^N_+,$ the extended Cooper-Rivin's functional is given by
$$\wt{\mathcal{S}}(\br)=\sum_{i\in  \mathcal{T}_0}4\pi r_i-\sum_{\{ijkl\}\in \mathcal{T}_3}\wt{\mathcal{U}}(\br),$$ where $\wt{\mathcal{U}}(\br)$ is defined in \eqref{eq:3-1}. Hence by Lemma~\ref{lem:extension} and \eqref{eq:crivin}, we have
$$\wt{\mathcal{S}}(\br)={\mathcal{S}}(\br),\quad \forall \ \br\in \MT.$$
Moreover, one can prove that
\begin{equation}\label{eq:criticalpoint}\nabla \wt{\mathcal{S}}(\br)=\wt{\pmb{K}}(\br).\end{equation} Hence, the critical points of the functional $\wt{\mathcal{S}}$ are given by ball packings with vanishing extended combinatorial scaler curvature. In addition, $\wt{\mathcal{S}}$ is a $C^1$-smooth convex functional on $\R^N_+$
 which is $C^\infty$-smooth strictly convex on $\MT.$ We summarize them in the following theorem.
 \begin{theorem}\label{thm:extendedf}For a triangulated 3-manifold $(M,\mathcal{T}),$ the extended Cooper-Rivin's functional $\wt{\mathcal{S}}:\R^N_+\to\R$ is a $C^1$-smooth convex functional, extending $\mathcal{S}$ in \eqref{eq:cprivinf}, which is $C^\infty$-smooth strictly convex on $\MT.$ Moreover,
 $$\nabla \wt{\mathcal{S}}(\br)=\wt{\pmb{K}}(\br),\quad \forall\ \br\in \R^N_+.$$
  $$\nabla^2 \wt{\mathcal{S}}(\br)=\frac{\partial \pmb{K}}{\partial \br},\quad \forall\ \br\in \MT.$$
 \end{theorem}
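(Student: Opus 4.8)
The plan is to read off every clause of the theorem from the single--tetrahedron statement, Lemma~\ref{lem:extension}, by summing over $\mathcal{T}_3$. For $\tau=\{ijkl\}\in\mathcal{T}_3$ and $\br\in\R^N_+$ I will write $\br_\tau:=(r_i,r_j,r_k,r_l)$, so that $\br\mapsto\br_\tau$ is a coordinate projection $\R^N_+\to\R^4_+$ and, by the definition of $\wt{\mathcal{S}}$,
$$\wt{\mathcal{S}}(\br)=\sum_{i\in\mathcal{T}_0}4\pi r_i-\sum_{\tau\in\mathcal{T}_3}\wt{\mathcal{U}}(\br_\tau).$$
Each summand $\br\mapsto\wt{\mathcal{U}}(\br_\tau)$ is the composition of $\wt{\mathcal{U}}\colon\R^4_+\to\R$ with a linear map, so Lemma~\ref{lem:extension} gives that it is $C^1$-smooth and concave on $\R^N_+$, and moreover $C^\infty$-smooth and strictly concave on the open set $U_\tau:=\{\br\in\R^N_+:\br_\tau\in\Omega_{ijkl}\}$ on which the tetrahedron $\tau$ carries a non-degenerate metric. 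Hence each $-\wt{\mathcal{U}}(\br_\tau)$ is $C^1$-smooth and convex; adding the affine term $\sum_i 4\pi r_i$, I conclude $\wt{\mathcal{S}}$ is $C^1$-smooth and convex on $\R^N_+$. Since a ball packing is real exactly when every tetrahedron is non-degenerate, $\MT=\bigcap_{\tau\in\mathcal{T}_3}U_\tau$; and since $\wt{\mathcal{U}}=\mathcal{U}$ on each $\Omega_{ijkl}$ (Lemma~\ref{lem:extension}), on $\MT$ each summand is $C^\infty$-smooth and $\wt{\mathcal{S}}$ agrees with the Cooper--Rivin functional $\mathcal{S}$ of \eqref{eq:cprivinf}. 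Thus $\wt{\mathcal{S}}$ is $C^\infty$-smooth on $\MT$ and extends $\mathcal{S}$.

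Next I would compute the first derivative term by term, using that $\nabla\wt{\mathcal{U}}=\wt{\pmb\alpha}$ holds on all of $\R^4_+$ (Lemma~\ref{lem:extension}): by the chain rule, $\partial[\wt{\mathcal{U}}(\br_\tau)]/\partial r_m$ equals the component of $\wt{\pmb\alpha}(\br_\tau)$ at the vertex $m$ when $m$ is a vertex of $\tau$, and vanishes otherwise. Summing over the tetrahedra incident to $i$ and recalling \eqref{eq:extendK} gives
$$\frac{\partial\wt{\mathcal{S}}}{\partial r_i}(\br)=4\pi-\sum_{\{ijkl\}\in\mathcal{T}_3}\wt\alpha_{ijkl}(\br)=\wt K_i(\br),\qquad \forall\, \br\in\R^N_+,$$
which is the claimed identity $\nabla\wt{\mathcal{S}}=\wt{\pmb K}$. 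The point worth noting is that this persists across the degenerate locus $\R^N_+\setminus\MT$, where $\wt{\pmb K}$ is merely continuous; this is exactly what Luo's convex $C^1$-extension (Lemma~\ref{lemma-luo's-extension}), built into Lemma~\ref{lem:extension}, delivers. Differentiating once more on the open set $\MT$, where $\wt K_i=K_i$ is $C^\infty$-smooth, yields $\nabla^2\wt{\mathcal{S}}(\br)=\frac{\partial\pmb K}{\partial\br}(\br)$ for $\br\in\MT$.

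What remains is to upgrade convexity to strict convexity on $\MT$, and this is the only step that is not pure bookkeeping. Differentiating the gradient formula on $\MT$ exhibits $\nabla^2\wt{\mathcal{S}}(\br)$ as the sum, over $\tau=\{ijkl\}\in\mathcal{T}_3$, of the $N\times N$ matrices $H_\tau(\br)$ obtained by placing the negative of $\frac{\partial\pmb\alpha}{\partial\br}(\br_\tau)$ into the block indexed by $\{i,j,k,l\}$ and zeros elsewhere. By the strict concavity of $\wt{\mathcal{U}}$ on $\Omega_{ijkl}$ (Lemma~\ref{lem:extension}), each $H_\tau(\br)$ is positive semidefinite, and its restriction to the coordinate subspace indexed by $\{i,j,k,l\}$ is positive definite. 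Now, given $\pmb 0\ne\pmb v\in\R^N$, I choose a vertex $i$ with $v_i\ne 0$ and --- using that $\mathcal{T}$ triangulates $M$, so that $i$ lies in at least one tetrahedron --- a $\tau=\{ijkl\}\in\mathcal{T}_3$ incident to $i$; then the restriction of $\pmb v$ to the coordinates $\{i,j,k,l\}$ is nonzero, so $\pmb{v}^{T}H_\tau(\br)\pmb{v}>0$, while $\pmb{v}^{T}H_{\tau'}(\br)\pmb{v}\ge 0$ for every other tetrahedron $\tau'$. Hence $\pmb{v}^{T}\nabla^2\wt{\mathcal{S}}(\br)\pmb{v}>0$, so $\nabla^2\wt{\mathcal{S}}$ is positive definite on $\MT$ and $\wt{\mathcal{S}}$ is strictly convex there, which completes the proof. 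The substantive content all sits in Lemma~\ref{lem:extension} (itself resting on Luo's extension theorem and on Rivin's Lemma~\ref{lem:rivin}); once that is in hand, the passage from one tetrahedron to the whole triangulation is formal, and the only point requiring more than formal manipulation is the covering argument above that turns semidefiniteness into definiteness.
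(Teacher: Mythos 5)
Your proposal is correct and takes essentially the same route as the paper, which states this theorem with only a sketch (asserting that everything follows from Lemma~\ref{lem:extension} by summing over tetrahedra); your block-decomposition of the Hessian and the covering argument (every vertex lies in some tetrahedron) is exactly the standard way to upgrade semidefiniteness to definiteness on $\MT$. One minor wording slip: the composed summand $\br\mapsto\wt{\mathcal{U}}(\br_\tau)$ is not itself strictly concave on $U_\tau$ when $N>4$ (it is constant in the coordinates outside $\tau$), but your final argument only uses the correct statement that the restriction of $H_\tau$ to the block indexed by the vertices of $\tau$ is positive definite, so the proof is unaffected.
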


 By using the extended Cooper-Rivin's functional, with some modification, Xu \cite{Xu} proved the following rigidity result for real ball packings.
 \begin{theorem}[Theorem~1.2 in \cite{Xu}]\label{thm:xu} For a triangulated 3-manifold $(M,\mathcal{T}),$
 the map
 $$\pmb{K}:\MT\mapsto \R^N$$ is injective.
 \end{theorem}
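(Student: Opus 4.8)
The plan is to run the standard convexity argument for rigidity in discrete conformal geometry, but working with the \emph{extended} Cooper--Rivin functional $\wt{\mathcal{S}}$ on the full cone $\R^N_+$ rather than with $\mathcal{S}$ on $\MT$; this is precisely the ``modification'' alluded to in the statement. First I would record that by Theorem~\ref{thm:extendedf} the functional $\wt{\mathcal{S}}:\R^N_+\to\R$ is $C^1$-smooth and convex, with $\nabla\wt{\mathcal{S}}(\br)=\wt{\pmb{K}}(\br)$ everywhere and $\nabla^2\wt{\mathcal{S}}(\br)=\frac{\partial\pmb{K}}{\partial\br}$ on $\MT$, and that on $\MT$ the restriction $\mathcal{S}$ is $C^\infty$ and \emph{strictly} convex (the Hessian being $\frac{\partial\pmb\alpha}{\partial\br}$ summed over tetrahedra, which is positive definite by Lemma~\ref{lem:rivin}). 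So on $\MT$ the gradient map $\br\mapsto\pmb{K}(\br)=\nabla\mathcal{S}(\br)$ is locally injective; the issue is global injectivity.

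Suppose $\br^{(0)},\br^{(1)}\in\MT$ satisfy $\pmb{K}(\br^{(0)})=\pmb{K}(\br^{(1)})=:\pmb{K}^\ast$. Consider the function
$$
g(t):=\wt{\mathcal{S}}\bigl((1-t)\br^{(0)}+t\br^{(1)}\bigr)-\langle \pmb{K}^\ast,\,(1-t)\br^{(0)}+t\br^{(1)}\rangle,\qquad t\in[0,1],
$$
which is well defined because $\R^N_+$ is convex and $\wt{\mathcal{S}}$ is defined on all of it. Since $\wt{\mathcal{S}}$ is $C^1$ and convex and we have subtracted a linear functional, $g$ is $C^1$ and convex on $[0,1]$. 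At the endpoints, $g'(0)=\langle\wt{\pmb{K}}(\br^{(0)})-\pmb{K}^\ast,\br^{(1)}-\br^{(0)}\rangle=0$ (using $\wt{\pmb{K}}=\pmb{K}$ on $\MT$, Theorem~\ref{thm:extendedf}), and likewise $g'(1)=0$. A convex $C^1$ function on $[0,1]$ with vanishing derivative at both ends is constant, hence $g''\equiv0$ in the sense that $g$ is affine; in particular $g$ is affine on a neighborhood of $t=0$. But near $t=0$ the segment stays inside the open set $\MT$ (since $\br^{(0)}\in\MT$), where $\wt{\mathcal{S}}=\mathcal{S}$ is \emph{strictly} convex; restricting a strictly convex function to a line segment and subtracting a linear map gives a strictly convex function of $t$, contradicting affineness unless the segment is degenerate, i.e. $\br^{(0)}=\br^{(1)}$. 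This is the whole argument.

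The main point requiring care — and where I expect the only real obstacle — is the passage from ``$g'(0)=g'(1)=0$ and $g$ convex'' to ``$g$ affine near $0$'': one must invoke that a convex function on an interval has monotone (one-sided) derivatives, so $g'_+(0)=0$ and $g'_-(1)=0$ force $g'\equiv0$ on $(0,1)$, hence $g$ constant on $[0,1]$; this is legitimate because $\wt{\mathcal{S}}$ is genuinely $C^1$ on the \emph{whole} cone $\R^N_+$ (Theorem~\ref{thm:extendedf}), which is exactly why the extension was built — on $\MT$ alone the segment could leave $\MT$ and the argument would break. A secondary subtlety is to make sure the strict-convexity contradiction is applied on a subinterval $(0,\epsilon)$ on which the segment genuinely lies in $\MT$ and $\br^{(1)}\neq\br^{(0)}$ makes the segment nondegenerate; both are immediate since $\MT$ is open. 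One should also note the trivial reduction: if one of the two packings were \emph{virtual} the statement would not even be asserted, so we may assume both lie in $\MT$ and the extension is used only as a technical device to get a globally $C^1$ convex potential. Everything else is bookkeeping with Theorem~\ref{thm:extendedf} and Lemma~\ref{lem:rivin}.
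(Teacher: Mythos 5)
Your proof is correct and follows essentially the paper's own route: the paper merely cites Xu for Theorem~\ref{thm:xu}, but its proof of the stronger Theorem~\ref{thm:alternative} is exactly your argument --- the $C^1$ convex potential $\wt{\mathcal{S}}(\br)-\langle\pmb{K}^\ast,\br\rangle$ on $\R^N_+$, strictly convex on $\MT$, has at most one critical point in $\MT$ --- with your restriction-to-a-segment computation simply spelling out the ``well-known result on convex functions'' the paper invokes. The only blemish is your parenthetical identifying $\nabla^2\mathcal{S}$ with $\sum\frac{\partial\pmb{\alpha}}{\partial\br}$: in fact $\nabla^2\mathcal{S}=\frac{\partial\pmb{K}}{\partial\br}=-\sum\frac{\partial\pmb{\alpha}}{\partial\br}$ (a sign ambiguity already present in the paper between Lemma~\ref{lem:rivin} and \eqref{eq:eqa-1}), and since the strict convexity of $\mathcal{S}$ on $\MT$ is what Theorem~\ref{thm:extendedf} records, nothing in your argument breaks.
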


We prove a generalization of Xu's rigidity theorem which will be useful for our purposes.
 \begin{theorem}\label{thm:alternative} Let $(M,\mathcal{T})$ be a triangulated 3-manifold and $\br_1\in\MT.$ Suppose that there is $\br_2\in \R^N_+$ such that
 $$\wt{\pmb K}(\br_2)=\pmb K(\br_1),$$ then $\br_1=\br_2.$
 \end{theorem}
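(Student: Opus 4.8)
The plan is to exploit the convexity of the extended functional $\wt{\mathcal{S}}$ on $\R^N_+$ together with the strict convexity of $\mathcal{S}$ on $\MT$ and the fact that $\br_1$ is an interior point of $\MT.$ By Theorem~\ref{thm:extendedf}, $\wt{\mathcal{S}}$ is $C^1$-smooth and convex on $\R^N_+,$ with $\nabla \wt{\mathcal{S}}=\wt{\pmb K},$ and it agrees with $\mathcal{S}$ on $\MT,$ where it is $C^\infty$-smooth and strictly convex. Suppose $\br_2\in\R^N_+$ satisfies $\wt{\pmb K}(\br_2)=\pmb K(\br_1).$ Set $g(t):=\wt{\mathcal{S}}((1-t)\br_1+t\br_2)$ for $t\in[0,1]$ (the segment lies in $\R^N_+$ since that set is convex). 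Then $g$ is $C^1$ and convex on $[0,1],$ and $g'(t)=\langle \wt{\pmb K}((1-t)\br_1+t\br_2),\,\br_2-\br_1\rangle.$

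The key step is to compare $g'(0)$ and $g'(1).$ We have $g'(0)=\langle \pmb K(\br_1),\br_2-\br_1\rangle$ and $g'(1)=\langle \wt{\pmb K}(\br_2),\br_2-\br_1\rangle=\langle \pmb K(\br_1),\br_2-\br_1\rangle,$ using the hypothesis $\wt{\pmb K}(\br_2)=\pmb K(\br_1).$ Hence $g'(0)=g'(1).$ Since $g$ is convex, $g'$ is non-decreasing on $[0,1],$ so $g'$ is constant on $[0,1],$ i.e. $g$ is affine on the segment. Now I would use the strict convexity of $\mathcal{S}=\wt{\mathcal{S}}$ on the open set $\MT$: since $\br_1\in\MT$ and $\MT$ is open, there is $\epsilon>0$ so that $(1-t)\br_1+t\br_2\in\MT$ for $t\in[0,\epsilon),$ and on that subinterval $g$ coincides with $t\mapsto\mathcal{S}((1-t)\br_1+t\br_2),$ which is \emph{strictly} convex unless $\br_2=\br_1$ (a linearly reparametrized strictly convex function along a nondegenerate segment is strictly convex). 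An affine function cannot be strictly convex on a nonempty open interval, so we must have $\br_1=\br_2.$

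The main obstacle is the strict-convexity argument at the endpoint $\br_1$: one must check that the second-order strict convexity of $\mathcal{S}$ on $\MT$ (equivalently, positive definiteness of $\nabla^2\wt{\mathcal{S}}(\br_1)=\frac{\partial\pmb K}{\partial\br}(\br_1)$ from Theorem~\ref{thm:extendedf}) genuinely forces $g''(0^+)>0$ when $\br_2\neq\br_1,$ i.e. that the direction $\br_2-\br_1$ is not somehow degenerate. This is immediate since $\nabla^2\wt{\mathcal{S}}(\br_1)$ is positive definite and $\br_2-\br_1\neq\pmb 0,$ giving $g''(0)=\langle \nabla^2\wt{\mathcal{S}}(\br_1)(\br_2-\br_1),\br_2-\br_1\rangle>0,$ which contradicts $g$ being affine near $0.$ One should also remark that nothing here requires knowing whether $\br_2$ is real or virtual — the argument only uses the global convexity of $\wt{\mathcal{S}}$ and the local strict convexity at the real endpoint $\br_1$ — which is precisely why this statement genuinely generalizes Theorem~\ref{thm:xu}.
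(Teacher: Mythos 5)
Your proof is correct and follows essentially the same route as the paper: the paper subtracts the linear functional $\sum_i K_i(\br_1)r_i$ from $\wt{\mathcal{S}}$ so that both $\br_1$ and $\br_2$ become critical points of a $C^1$ convex functional on $\R^N_+$ that is strictly convex on $\MT$, and then invokes the standard uniqueness of critical points; your segment argument with $g'(0)=g'(1)$ is precisely an unfolding of that "well-known result on convex functions." No gap.
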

 \begin{proof} Consider the set $A:=\{\br\in\R^N_+: \wt{\pmb K}(\br)=\pmb K(\br_1)\}.$ We define the functional on $\R^N_+$ by
 $$F(\br)=\wt{\mathcal{S}}(\br)-\sum_{i\in\mathcal{T}_0}K_i(\br_1)r_i.$$ By Theorem~\ref{thm:extendedf}, the set of critical points of the functional $F$ is given by $A.$ Note that the functional $F$ is $C^1$-smooth, convex on $\R^N_+$ and strictly convex on $\MT.$ Since $\br_1\in A\cap \MT,$ a well-known result on convex functions implies that $\br_1$ is the unique critical point of $F,$ which proves the theorem.
 \end{proof}

Now we are ready to prove Theorem~\ref{thm:Yamabeequivalent}.
\begin{proof}[Proof of Theorem~\ref{thm:Yamabeequivalent}] The equivalence $(1)\Leftrightarrow (2)$ follows from \eqref{eq:nabequal}. The implication $(3)\Rightarrow (2)$ is obvious. Now we prove that $(1)\Rightarrow (3).$ By Theorem~\ref{thm:extendedf}, $\overline{\br}$ is a critical point of the functional $\wt{\mathcal{S}}$ which is $C^1$-smooth convex on $\R^N_+.$ Hence $\overline{\br}$ is a global minimizer of $\wt{\mathcal{S}}.$ Note that $\wt{\mathcal{S}}$ extends $\mathcal{S}$ on $\MT$ and $\overline{\br}\in \MT.$ This implies that $\overline{\br}$ is a global minimizer of $\mathcal{S}$ on $\MT.$ This proves the result.
\end{proof}

\subsection{Longtime existence of the extended combinatorial Yamabe flow}
\label{section-long-exit-extend-flow}
As mentioned in the introduction, the solution to the combinatorial Yamabe flow \eqref{hyperyflow} can develop singularity in finite time, i.e. the maximal time $T<\infty.$ To resolve the problem, we define the extended combinatorial Yamabe flow in \eqref{extflow} using the extension of the combinatorial scaler curvature \eqref{extended curvature}. We will prove that the solution to the flow \eqref{extflow} exists for all time and extends the original flow \eqref{hyperyflow}, see analogous results in Euclidean setting in \cite{GJ1,GJ3}. In the hyperbolic setting, we use the hyperbolic geometry to get some a priori estimates for the solutions and then obtain the long time existence of the solutions, which is quite different from the Euclidean setting.
\begin{theorem}[Long time existence]
\label{thm-yang-write}
For any initial data $\br(0)\in \R^N_+,$ there exists a solution $\br(t)$ to the extended combinatorial Yamabe flow \eqref{extflow} which exists for all $t\in[0,\infty).$
\end{theorem}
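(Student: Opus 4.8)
Here is the plan. The solution exists locally on a maximal interval $[0,T)$ by classical ODE theory, since the right-hand side of \eqref{extflow} is continuous on $\R^N_+$ (each $\wt\alpha_{ijkl}$ is continuous by Theorem~\ref{thm:cont} and $\sinh r_i$ is smooth). By the continuation principle, if $T<\infty$ then $\br(t)$ must eventually leave every compact subset of $\R^N_+$, i.e. either $\liminf_{t\to T}\min_i r_i(t)=0$ or $\limsup_{t\to T}\max_i r_i(t)=\infty$. The plan is to exclude both alternatives by a priori estimates, which forces $T=\infty$; note these estimates will hold for \emph{any} solution, so no uniqueness is needed here.

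\emph{Lower bound.} Since every extended solid angle is nonnegative, $\wt K_i=4\pi-\sum_{\{ijkl\}\in\mathcal{T}_3}\wt\alpha_{ijkl}\le 4\pi$, so $\frac{dr_i}{dt}=-\wt K_i\sinh r_i\ge -4\pi\sinh r_i$. Comparing with the scalar equation $\dot y=-4\pi\sinh y$, $y(0)=r_i(0)$, whose solution satisfies $\tanh\frac{y(t)}{2}=e^{-4\pi t}\tanh\frac{y(0)}{2}>0$ for all $t\ge 0$, one gets $\tanh\frac{r_i(t)}{2}\ge e^{-4\pi t}\tanh\frac{r_i(0)}{2}$. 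Hence each $r_i$ stays bounded below by a positive constant on every finite time interval, and the first alternative cannot occur.

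\emph{A uniform smallness estimate.} The key input for the upper bound is a refinement of Theorem~\ref{thm:secondcompare}: for every $\epsilon>0$ there is $R_\epsilon>0$ such that for any real or virtual tetrahedron $\tau(\br)$ with $r_i=\max\{r_i,r_j,r_k,r_l\}$ and $r_i\ge R_\epsilon$, one has $\wt\alpha_i(\br)\le\epsilon$. If $\br\notin\Omega_{1234}$ this is immediate: $\br\in D_m$ with $m\ne i$ by Proposition~\ref{lemma-Di-imply-ri-small}, so $\wt\alpha_i(\br)=0$. If $\br\in\Omega_{1234}$, I would rerun the proof of Theorem~\ref{thm:secondcompare} keeping $\delta$ free instead of letting $\delta\to 0$: with $\varphi_\delta$ as there one has $\varphi_\delta(\br)\le\varphi_\delta(\delta\pmb{1})=0$, and combining this with Lemma~\ref{lem:Glick1} (so that $\alpha_i(\br)=\min_m\alpha_m(\br)$) and the symmetry $\pmb\alpha(\delta\pmb{1})=\alpha_1(\delta\pmb{1})\pmb{1}$ gives
\[\alpha_i(\br)\ \le\ \alpha_1(\delta\pmb{1})+\frac{2\vol(\delta\pmb{1})}{\sum_m r_m}\ \le\ \alpha_1(\delta\pmb{1})+\frac{2\vol(\delta\pmb{1})}{r_i}.\]
Choosing $\delta$ with $\alpha_1(\delta\pmb{1})\le\epsilon/2$ (possible by Proposition~\ref{prop:regulartetra}, since $\alpha_1(\delta\pmb{1})\to 0$ as $\delta\to\infty$) and then $R_\epsilon\ge 4\vol(\delta\pmb{1})/\epsilon$ yields the claim. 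Geometrically this reflects that as $r_i\to\infty$ the vertex $v_i$ recedes to an ideal point, its spherical link degenerates to a Euclidean triangle, and $\alpha_i=\beta_{ij}+\beta_{ik}+\beta_{il}-\pi\to 0$.

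\emph{Upper bound and conclusion.} Set $D=\max_i d_i$, apply the previous step with $\epsilon=\pi/D$, and put $R_0=\max\{R_\epsilon,\max_i r_i(0)\}$. Consider the locally Lipschitz function $R(t):=\max_i r_i(t)$. At almost every $t$ with $R(t)\ge R_0$, let $i^*$ realize the maximum; in every tetrahedron $\{i^*jkl\}\in\mathcal{T}_3$ the radius $r_{i^*}=R(t)\ge R_\epsilon$ is the largest, so $\wt\alpha_{i^*jkl}\le\epsilon$ by the smallness estimate, whence $\wt K_{i^*}=4\pi-\sum_{\{i^*jkl\}\in\mathcal{T}_3}\wt\alpha_{i^*jkl}\ge 4\pi-d_{i^*}\epsilon\ge 3\pi>0$, and therefore $R'(t)=\dot r_{i^*}(t)=-\wt K_{i^*}\sinh R(t)<0$. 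A routine continuity argument then gives $R(t)\le R_0$ for all $t\in[0,T)$ (if $R$ exceeded $R_0$ at some time it would be strictly decreasing throughout a sub-interval on which it remains $\ge R_0$, a contradiction). Together with the lower bound, $\br(t)$ stays in a fixed compact subset of $\R^N_+$ on $[0,T)$, contradicting the maximality of $T$ when $T<\infty$; hence $T=\infty$. Everything here is ODE comparison plus the continuation principle except the uniform smallness estimate for $\wt\alpha_i$ at a vertex carrying the (large) maximal radius, which I expect to be the main obstacle.
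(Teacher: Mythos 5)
Your proof is correct, and its overall architecture (local existence by Peano, a lower bound from the uniform bound on $\wt{K}_i$ via $\frac{d}{dt}\ln\tanh\frac{r_i}{2}=-\wt K_i$, a uniform upper bound from a maximum-principle argument on $\max_i r_i$ driven by smallness of solid angles at the vertex of largest radius, then the ODE continuation principle) is the same as the paper's. Where you genuinely diverge is in how you establish that smallness. The paper proves Lemma~\ref{lem:smallsolid} by pure hyperbolic geometry: it intersects the tetrahedron with a small sphere about $v_i$, identifies $\alpha_{ijkl}$ with the area of the resulting spherical triangle whose side lengths are the face angles $\gamma_{ijk},\gamma_{ikl},\gamma_{ijl}$, bounds those by Proposition~\ref{prop:stangle}, and concludes with L'Huilier's formula; this gives $\alpha_{ijkl}\le\epsilon$ whenever $r_i\ge C_2(\epsilon)$, with no assumption that $r_i$ is maximal. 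You instead rerun the variational argument of Theorem~\ref{thm:secondcompare} with $\delta$ kept finite, using the concavity of $\wt{\mathcal U}$, Lemma~\ref{lem:Glick1}, and the bound $\vol(\br)\ge0$ to get the explicit estimate $\alpha_i(\br)\le\alpha_1(\delta\pmb 1)+2\vol(\delta\pmb 1)/r_i$, then send $\delta\to\infty$ via Proposition~\ref{prop:regulartetra}; this yields a weaker statement (it needs $r_i$ to be the maximal radius, and handles the virtual case separately through Proposition~\ref{lemma-Di-imply-ri-small} and Corollary~\ref{lemma-ri-small-imply-Di}), but that weaker statement is exactly what the upper-bound argument requires. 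Your route buys a softer, purely convexity-based proof that reuses machinery already developed for the comparison principles and avoids spherical trigonometry; the paper's route buys a stronger, vertex-local geometric fact (smallness of the solid angle at any vertex of large radius, maximal or not). Both are complete; the only cosmetic difference elsewhere is that you bound $\wt K_i\le 4\pi$ where the paper uses $|\wt K_i|\le 2\pi(\max_i d_i+1)$, which is immaterial for the lower bound.
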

\begin{remark} We will prove that such a solution is unique and extends the solution to the combinatorial Yamabe flow \eqref{hyperyflow} on $[0,T)$ in the next subsection.
\end{remark}

In order to estimate the solutions to extended combinatorial Yamabe flow \eqref{extflow}, we need the following calculus lemma.
For a continuous function $f:[0,\infty)\to \R$ and any $C\in\R,$ the upper level set of $f$ at $C$ is defined as
$$\{f>C\}:=\{t\in[0,\infty): f(t)>C\}.$$ The lower level set $\{f<C\}$ is defined similarly.
\begin{lemma}\label{lem:calc}
Let $f:[0,\infty)\to \R$ be a locally Lipschitz function. Suppose that there is a constant $C$ such that
$$f'(t)\leq 0,\quad \mathrm{for\ a.e.}\ t\ \mathrm{in}\ \{f>C\},$$ then
$$f(t)\leq \max\{f(0), C\},\quad \forall\ t\in [0,\infty).$$ Similarly, if $$f'(t)\geq 0, \quad \mathrm{for\ a.e.}\ t\ \mathrm{in}\ \{f<C\},$$ then
$$f(t)\geq \min\{f(0), C\},\quad \forall\ t\in [0,\infty).$$
\end{lemma}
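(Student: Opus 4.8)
The plan is to argue by contradiction, using that a locally Lipschitz function on $[0,\infty)$ is absolutely continuous on every compact subinterval, and hence satisfies the fundamental theorem of calculus there (recall a Lipschitz function is differentiable almost everywhere and $g(b)-g(a)=\int_a^b g'(s)\,ds$ whenever $g$ is absolutely continuous on $[a,b]$). We prove the first assertion; the second follows by applying it to $-f$ with constant $-C$, since $\{-f>-C\}=\{f<C\}$, $(-f)'=-f'$, and $-\max\{-f(0),-C\}=\min\{f(0),C\}$.

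Set $M:=\max\{f(0),C\}$ and suppose, for contradiction, that there is $t_1>0$ with $f(t_1)>M$. Define $t_0:=\sup\{t\in[0,t_1]:f(t)\le M\}$. This set contains $0$, so $t_0$ is well defined; by continuity of $f$ there is a neighbourhood of $t_1$ on which $f>M$, so $t_0<t_1$, and again by continuity $f(t_0)\le M$. Moreover, for every $t\in(t_0,t_1]$ we must have $f(t)>M$: otherwise $t$ would lie in the set whose supremum is $t_0$, forcing $t\le t_0$, a contradiction. Thus $(t_0,t_1]\subseteq\{f>M\}$.

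Since $M\ge C$, we get $(t_0,t_1)\subseteq\{f>C\}$, and by hypothesis $f'(t)\le 0$ for almost every $t\in(t_0,t_1)$. As $f$ is locally Lipschitz, its restriction to the compact interval $[t_0,t_1]$ is Lipschitz, hence absolutely continuous, so
$$f(t_1)-f(t_0)=\int_{t_0}^{t_1}f'(s)\,ds\le 0.$$
Therefore $f(t_1)\le f(t_0)\le M$, contradicting $f(t_1)>M$. This proves $f(t)\le M$ for all $t\in[0,\infty)$, and the second half follows as indicated above.

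There is no real obstacle here: the argument is elementary once one sets up the sup defining $t_0$ correctly and is careful to invoke absolute continuity of $f$ on the compact interval $[t_0,t_1]$ (rather than differentiating pointwise), so that the a.e.-sign information on $f'$ can be integrated. The only place that requires a moment's care is verifying $(t_0,t_1]\subseteq\{f>M\}$, which is what licenses using the hypothesis on the whole interval.
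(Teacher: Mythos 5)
Your proof is correct and follows essentially the same route as the paper's: argue by contradiction, isolate an interval on which $f$ exceeds $\max\{f(0),C\}$ with $f$ at the threshold at its left endpoint, and integrate $f'\le 0$ over that interval using the absolute continuity of the locally Lipschitz $f$. The only cosmetic difference is that you construct the interval via the supremum $t_0$ rather than by decomposing the open set $\{f>\max\{f(0),C\}\}$ into its connected components, which is a slightly tidier way to reach the same contradiction.
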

\begin{proof} Without loss of generality, we prove the first assertion.
Suppose that it is not true, then
$\{f>D\}$ for $D:=\max\{f(0), C\}$ is a non-empty open set in $(0,\infty).$ Hence $\{f>D\}$ is a countable union of disjoint open intervals $(a_i,b_i)$ in $(0,\infty),$ i.e. $$\{f>D\}=\cup_{i=1}^\infty(a_i,b_i).$$ Consider one of these intervals, say $(a_1,b_1).$ By the continuity of $f,$ $f(a_1)=f(b_1)=D.$ Since $f$ is locally Lipschitz, for any $t\in (a_1,b_1),$ by the assumption,
$$f(t)=f(a_1)+\int_{a_1}^t f'(s)ds\leq f(a_1)=D.$$ This contradicts to $t\in \{f>D\}.$ This proves the lemma.
\end{proof}

For our purposes, we need to estimate the solid angles in the hyperbolic geometry. The following proposition is well-known in the hyperbolic geometry, see \cite[Lemma 3.5]{CL1}, \cite[Lemma 3.2]{GX4} or \cite[Lemma 2.3]{GJ3}. For a hyperbolic triangle $\Delta_{v_iv_jv_k}$ of vertices $v_i,v_j,v_k$ in $\HH^2,$ we denote by
$\gamma_{ijk}$ the angle at the vertex $v_i.$
\begin{proposition}\label{prop:stangle} For any $\epsilon>0,$ there exists a constant $C_1(\epsilon),$ depending only on $\epsilon,$ such that for any $r_i\geq C_1,r_j>0,r_k>0$ and the hyperbolic triangle $\Delta_{v_iv_jv_k}$ in $\HH^2$ with edge lengths
$$l_{v_iv_j}=r_i+r_j,l_{v_jv_k}=r_j+r_k,l_{v_kv_i}=r_k+r_i,$$
$$\gamma_{ijk}\leq \epsilon.$$
\end{proposition}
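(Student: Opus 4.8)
The plan is to reduce the statement to a monotonicity estimate for a single angle in a hyperbolic triangle and then invoke a basic comparison. First I would normalize by symmetry: it suffices to bound the angle $\gamma_{ijk}$ at the vertex $v_i$ whose two incident edges have lengths $l_{v_iv_j}=r_i+r_j$ and $l_{v_iv_k}=r_i+r_k$, both of which are at least $r_i$. The hyperbolic law of cosines gives
\[
\cos\gamma_{ijk}=\frac{\cosh l_{v_iv_j}\cosh l_{v_iv_k}-\cosh l_{v_jv_k}}{\sinh l_{v_iv_j}\sinh l_{v_iv_k}}.
\]
Writing $a=r_i+r_j$, $b=r_i+r_k$, $c=r_j+r_k$ and noting $a+b-c=2r_i$, $a-b=r_j-r_k$, the idea is to show the right-hand side tends to $1$ uniformly in $r_j,r_k>0$ as $r_i\to\infty$. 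Using the product-to-sum identities, $\cosh a\cosh b=\tfrac12(\cosh(a+b)+\cosh(a-b))$ and $\sinh a\sinh b=\tfrac12(\cosh(a+b)-\cosh(a-b))$, so
\[
\cos\gamma_{ijk}=1-\frac{\cosh(a-b)-\cosh c}{\sinh a\sinh b}
=1-\frac{\cosh(r_j-r_k)-\cosh(r_j+r_k)}{\sinh a\sinh b}.
\]
Since $\cosh(r_j-r_k)-\cosh(r_j+r_k)=-2\sinh r_j\sinh r_k<0$, we get the clean identity
\[
1-\cos\gamma_{ijk}=\frac{2\sinh r_j\sinh r_k}{\sinh(r_i+r_j)\sinh(r_i+r_k)}.
\]

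From this identity the estimate is essentially immediate. Using $\sinh(r_i+r_j)\geq e^{r_i}\sinh r_j$ (which holds because $\sinh(x+y)=\sinh x\cosh y+\cosh x\sinh y\geq e^{x}\sinh y$ for $x,y\ge 0$, as $\cosh x+\sinh x=e^x$ and $\cosh y\ge 1$, $\sinh x\ge 0$), and likewise $\sinh(r_i+r_k)\geq e^{r_i}\sinh r_k$, we obtain
\[
1-\cos\gamma_{ijk}\leq \frac{2\sinh r_j\sinh r_k}{e^{2r_i}\sinh r_j\sinh r_k}=2e^{-2r_i}.
\]
Hence $\cos\gamma_{ijk}\geq 1-2e^{-2r_i}$, and for $r_i$ large this forces $\gamma_{ijk}$ small: concretely, given $\epsilon>0$, choosing $C_1(\epsilon)$ so that $2e^{-2C_1}\leq 1-\cos\epsilon$ yields $\cos\gamma_{ijk}\geq\cos\epsilon$, hence $\gamma_{ijk}\leq\epsilon$ since cosine is decreasing on $[0,\pi]$. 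Explicitly one may take $C_1(\epsilon)=\tfrac12\log\!\big(\tfrac{2}{1-\cos\epsilon}\big)$.

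The only point requiring a little care is the elementary inequality $\sinh(x+y)\geq e^{x}\sinh y$ for $x,y\geq 0$, which I would verify as above from the addition formula; everything else is direct computation with the hyperbolic law of cosines. I do not anticipate a genuine obstacle here — the substance is the exact identity $1-\cos\gamma_{ijk}=\dfrac{2\sinh r_j\sinh r_k}{\sinh(r_i+r_j)\sinh(r_i+r_k)}$, after which the uniform decay in $r_i$ (uniform precisely because the $\sinh r_j\sinh r_k$ factors cancel) drops out. An alternative, if one prefers to avoid the law of cosines, is to realize $\Delta_{v_iv_jv_k}$ as the triangle of centers of three mutually tangent horodisk-free disks of radii $r_i,r_j,r_k$ and use a comparison with the Euclidean picture, but the direct computation is shorter and self-contained.
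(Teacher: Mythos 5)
Your argument is correct, and the key identity it rests on,
\begin{equation*}
1-\cos\gamma_{ijk}=\frac{\cosh(r_j+r_k)-\cosh(r_j-r_k)}{\sinh(r_i+r_j)\sinh(r_i+r_k)}=\frac{2\sinh r_j\sinh r_k}{\sinh(r_i+r_j)\sinh(r_i+r_k)},
\end{equation*}
is exactly right; combined with $\sinh(r_i+r_j)\geq e^{r_i}\sinh r_j$ it gives the uniform bound $1-\cos\gamma_{ijk}\leq 2e^{-2r_i}$ and the explicit constant $C_1(\epsilon)=\tfrac12\log\bigl(2/(1-\cos\epsilon)\bigr)$. Note, however, that the paper does not prove this proposition at all: it is stated as ``well-known'' with citations to Lemma~3.5 of \cite{CL1}, Lemma~3.2 of \cite{GX4} and Lemma~2.3 of \cite{GJ3}, so there is no in-paper argument to compare against. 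Your self-contained computation via the hyperbolic law of cosines is in the same spirit as those references but has the advantage of producing a quantitative, explicit $C_1(\epsilon)$ and of making the uniformity in $r_j,r_k$ transparent (the $\sinh r_j\sinh r_k$ factors cancel exactly). One small slip to fix before this could be inserted anywhere: the intermediate display $\cos\gamma_{ijk}=1-\frac{\cosh(a-b)-\cosh c}{\sinh a\sinh b}$ has the wrong sign --- since $\cosh a\cosh b-\sinh a\sinh b=\cosh(a-b)$, the correct statement is $\cos\gamma_{ijk}=1+\frac{\cosh(a-b)-\cosh c}{\sinh a\sinh b}$, equivalently $1-\cos\gamma_{ijk}=\frac{\cosh c-\cosh(a-b)}{\sinh a\sinh b}$; your final ``clean identity'' is nevertheless the correct one, so the error is purely typographical and self-correcting.
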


We prove the following lemma, which is crucial for the upper bound estimate of the solutions to the extended combinatorial Yamabe flow.
\begin{lemma}\label{lem:smallsolid} For any $\epsilon>0,$ there exists a constant $C_2(\epsilon)$ such that for any real tetrahedron $\tau_{v_iv_jv_kv_l}$ defined by the ball packing $\br,$ if $r_i\geq C_2,$ then
$$\alpha_{ijkl}(\br)\leq \epsilon.$$
\end{lemma}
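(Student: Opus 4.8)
The plan is to reduce the estimate on the solid angle $\alpha_{ijkl}$ to the estimate on the angle of a hyperbolic triangle provided by Proposition~\ref{prop:stangle}. Recall from \eqref{eq:eq2-3} that $\alpha_i = \beta_{ij} + \beta_{ik} + \beta_{il} - \pi$, so it suffices to bound each dihedral angle $\beta_{ij}$, $\beta_{ik}$, $\beta_{il}$ when $r_i$ is large. The key geometric observation is that a dihedral angle $\beta_{ij}$ of the tetrahedron $\tau_{v_iv_jv_kv_l}$, measured along the edge $v_iv_j$, does not exceed the angle at the corresponding vertex of a suitable two-dimensional hyperbolic triangle built from the same radii. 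More precisely, consider the orthogonal projection/foot-of-perpendicular construction or, more directly, note that the face $v_iv_jv_k$ and the face $v_iv_jv_l$ each contain the edge $v_iv_j$; the dihedral angle $\beta_{ij}$ is the angle between these two faces. I would compare it with the triangle $\Delta_{v_iv_jv_k}$ (or $\Delta_{v_iv_jv_l}$) in $\HH^2$ whose edge lengths are $r_i+r_j$, $r_j+r_k$, $r_k+r_i$; since $r_i \geq C_2$, Proposition~\ref{prop:stangle} gives that the angle at $v_i$ in that triangle is at most some prescribed small quantity.

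The cleanest route, however, avoids a delicate comparison and instead uses the monotonicity already established. First I would invoke Lemma~\ref{lem:partialpositive}: since $r_i \geq C_2$, for any choice of $r_j, r_k, r_l$ with the configuration real, we can first observe that decreasing $r_j, r_k, r_l$ down to something comparable to them does not help directly, so instead I use Theorem~\ref{thm:secondcompare} or, more to the point, the explicit formula \eqref{eq:eq2-5} for $\cos\beta_{ij}$. From \eqref{eq:eq2-5}, writing $y_m = \coth r_m$, as $r_i \to \infty$ we have $y_i \to 1$, and the prefactor $\sinh r_i \sinh r_j \sqrt{\sinh r_k \sinh r_l}/(4\sqrt{\sinh(r_i+r_j+r_k)\sinh(r_i+r_j+r_l)})$ behaves like a bounded quantity times $e^{-r_i}$ (the $\sinh r_i$ in the numerator is dominated by the product of two $\sinh(r_i + \cdots)$ factors in the denominator, each of size $\asymp e^{r_i}$, giving a net factor $\asymp e^{-r_i}$), while $Q(\br) - (y_i+y_j)^2 + (y_k-y_l)^2$ stays bounded since all $y_m \in (1,\infty)$ with $y_m$ bounded once $r_m$ is bounded below — but here $r_j, r_k, r_l$ are only known to be positive, so $y_k, y_l$ could be large. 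To handle that, I would bound $\cos\beta_{ij}$ in absolute value: $|Q(\br) - (y_i+y_j)^2 + (y_k - y_l)^2|$ is controlled by a polynomial in $y_j, y_k, y_l$, while the denominator $\sqrt{\sinh(r_i+r_j+r_k)\sinh(r_i+r_j+r_l)}$ grows like $e^{r_i}$ times factors that absorb the growth in $r_j, r_k, r_l$ (since $\sinh(r_i+r_j+r_k) \geq \tfrac12 e^{r_i}\sinh(r_j)\cdot(\text{const})$ type bounds, and $y_j = \coth r_j$ small $r_j$ forces $\sinh r_j$ small but then $y_j$ large — these two effects cancel). Carefully matching the powers of $e^{r_j}, e^{r_k}, e^{r_l}$ shows $|\cos\beta_{ij}| \leq C e^{-r_i}$ uniformly in $r_j, r_k, r_l > 0$, and similarly for $\beta_{ik}, \beta_{il}$.

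Given such a uniform bound $|\cos\beta_{ij} + 1| $ small is not what we get; rather we get $|\cos\beta_{ij}|$ small, hence $\beta_{ij}$ close to $\pi/2$. Then $\alpha_i = \beta_{ij} + \beta_{ik} + \beta_{il} - \pi$ would be close to $\pi/2$, not $0$ — so the bound on $|\cos\beta_{ij}|$ alone is insufficient. I therefore return to the comparison approach: the correct statement is that each of $\beta_{ij}, \beta_{ik}, \beta_{il}$ is itself small when $r_i$ is large, which is exactly the content one extracts by comparing the dihedral angle at edge $v_iv_j$ with the planar angle at $v_i$ in the face triangle. The rigorous link is: project the tetrahedron onto the plane perpendicular to nothing in particular, but instead use that the dihedral angle $\beta_{ij}$ is bounded above by the face angle of $\Delta_{v_iv_jv_k}$ at $v_i$ — this is a standard fact for simplices (a dihedral angle is at most the opposite... actually: in a tetrahedron the solid angle $\alpha_i$ is bounded above by the sum of the three face angles at $v_i$, and conversely). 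The clean inequality I will use is $\alpha_i \le \gamma_{ijk} + \gamma_{ijl} + \gamma_{ikl}$ where $\gamma_{ijk}$ is the angle at $v_i$ in the face triangle $v_iv_jv_k$; this is a well-known spherical-geometry fact (the solid angle, as a spherical triangle on the unit sphere at $v_i$, has perimeter equal to the sum of the face angles, and by the spherical isoperimetric-type inequality its area is at most... ). Then, since each face triangle $v_iv_jv_k$ is a hyperbolic triangle with the edge lengths in Proposition~\ref{prop:stangle}, taking $C_2 = C_1(\epsilon/3)$ gives $\gamma_{ijk}, \gamma_{ijl}, \gamma_{ikl} \le \epsilon/3$, hence $\alpha_{ijkl} = \alpha_i \le \epsilon$, completing the proof.

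The main obstacle, and the step I would be most careful about, is justifying the inequality $\alpha_i \le \gamma_{ijk} + \gamma_{ijl} + \gamma_{ikl}$ relating the solid angle at $v_i$ to the three face angles at $v_i$. This is a statement about the link of $v_i$, which is a spherical triangle on the unit tangent sphere $S^2_{v_i}$ whose side lengths are precisely the three face angles $\gamma_{ijk}, \gamma_{ijl}, \gamma_{ikl}$ and whose area is the solid angle $\alpha_i$ (by the Gauss–Bonnet/Girard formula, $\alpha_i = \beta_{ij}+\beta_{ik}+\beta_{il} - \pi$, consistent with \eqref{eq:eq2-3}). The area of a spherical triangle is bounded above by its perimeter (since for a spherical triangle with sides $a,b,c$ one has $\mathrm{Area} \le a+b+c$, e.g. because each angle is less than $\pi$ and area $= \sum \text{angles} - \pi < \pi \cdot ... $ — this needs a genuinely correct elementary argument, perhaps: area $\le$ perimeter holds because a spherical triangle is contained in a spherical cap of angular radius $\le$ half its perimeter). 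I would state and prove this small spherical-geometry lemma carefully, or alternatively cite it; with it in hand, the rest is an immediate application of Proposition~\ref{prop:stangle} with $\epsilon$ replaced by $\epsilon/3$, setting $C_2(\epsilon) := C_1(\epsilon/3)$.
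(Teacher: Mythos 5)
After the two detours you yourself discard, your final argument is essentially the paper's: both identify the solid angle $\alpha_{ijkl}$ with the area of the link spherical triangle at $v_i$ whose side lengths are the three face angles $\gamma_{ijk},\gamma_{ikl},\gamma_{ijl}$, and make those sides small via Proposition~\ref{prop:stangle}. The only difference is the final step --- the paper bounds the area by L'Huilier's formula, while you invoke $\mathrm{Area}\le\mathrm{perimeter}$; that inequality is true and, for the small triangles arising here, follows at once from containment in a spherical cap of radius at most the perimeter (whose area is at most $\pi$ times the radius squared), so the step you flag as delicate is the only one left to write out and it is harmless.
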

\begin{proof} Set $r_0:=\arcsinh1.$ Let $C_2$ be the constant satisfying $C_2\geq r_0,$ to be determined later. 
For the tetrahedron $\tau_{v_iv_jv_kv_l}$ in $\HH^3,$ let $B_{r_0}(v_i)$ be the ball of radius $r_0$ centered at $v_i.$ We denote by $\Delta_{w_jw_kw_l}$ the intersection of $\partial B_{r_0}(v_i)$ and $\tau_{v_iv_jv_kv_l},$ which is a spherical triangle in the unit sphere with vertices $w_j,w_k,w_l$ on the geodesics (or edges) $v_iv_j,$ $v_iv_k$ and $v_iv_l$ respectively.
One is ready to see that
$\alpha_{ijkl}$ is equal to the area of $\Delta_{w_jw_kw_l},$ denoted by $|\Delta_{w_jw_kw_l}|.$ We write $l_{w_jw_k},l_{w_kw_l},l_{w_lw_j}$ for the lengths of sides of $\Delta_{w_jw_kw_l}$ and $$s=\frac{1}{2}(l_{w_jw_k}+l_{w_kw_l}+l_{w_lw_j}).$$ Note that$$l_{w_jw_k}=\gamma_{ijk},l_{w_kw_l}=\gamma_{ikl},l_{w_lw_j}=\gamma_{ijl},$$ where $\gamma_{ijk}$ ($\gamma_{ikl},$ $\gamma_{ijl}$ resp.) is the angle at the vertex $v_i$ of the hyperbolic triangle $\Delta_{v_iv_jv_k}$ ($\Delta_{v_iv_kv_l},$ $\Delta_{v_iv_jv_l}$ resp.). By Proposition~\ref{prop:stangle}, for any $\epsilon_1>0,$ there exists $C_1(\epsilon_1)$ such that for $r_i\geq C_1,$
$$\max\{\gamma_{ijk}, \gamma_{ikl}, \gamma_{ijl}\}<\epsilon_1.$$ Then by L'Huilier's formula in spherical geometry,
$$\tan^2\left(\frac{|\Delta_{w_jw_kw_l}|}{4}\right)=\tan\frac{s}{2}\tan\frac{s-l_{w_jw_k}}{2}\tan\frac{s-l_{w_kw_l}}{2}\tan\frac{s-l_{w_lw_j}}{2}\leq f(\epsilon_1),$$ for some function $f(\epsilon_1)$ satisfying $f(\epsilon_1)\to 0,$ as $\epsilon_1\to 0.$  Hence for any $\epsilon>0,$ we choose small $\epsilon_1$ such that for any $r_i\geq C_2:=C_1(\epsilon_1),$ $$|\Delta_{w_jw_kw_l}|<\epsilon.$$ This proves the lemma.

\end{proof}

Now we are ready to obtain upper bound estimates for the solutions to extended combinatorial Yamabe flow.
\begin{theorem}\label{thm:upperest} For a triangulated 3-manifold $(M,\mathcal{T}),$ let $\br(t)$ be a solution to the extended flow \eqref{extflow} on $[0,T),$ possibly $T=\infty.$ Then there exists a constant $C_3,$ depending on the initial data $\br(0)$ and the triangulation $\mathcal{T},$ such that
$$r_i(t)\leq C_3,\quad \forall\ i\in \mathcal{T}_0,\ t\in [0,T).$$\end{theorem}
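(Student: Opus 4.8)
The plan is to run a maximum-principle argument on the largest radius, with Lemma~\ref{lem:smallsolid} supplying the essential geometric input: when one radius of a tetrahedron is huge, its solid angle is tiny. Set $d_{\max}:=\max_{i\in\mathcal{T}_0}d_i$, which depends only on the triangulation, and fix $\epsilon_0>0$ small enough that $d_{\max}\,\epsilon_0<4\pi$, say $\epsilon_0=\pi/d_{\max}$. Let $C_2=C_2(\epsilon_0)$ be the constant furnished by Lemma~\ref{lem:smallsolid}, and consider the locally Lipschitz function $r_{\max}(t):=\max_{i\in\mathcal{T}_0}r_i(t)$ on $[0,T)$.

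The core claim is that $\frac{d}{dt}r_{\max}(t)\le 0$ for a.e.\ $t$ with $r_{\max}(t)>C_2$. Fix such a $t$ at which $r_{\max}$ is differentiable (a.e.\ point by Rademacher), and pick a vertex $i$ with $r_i(t)=r_{\max}(t)$; since $r_{\max}-r_i\ge 0$ with equality at $t$, one has $\frac{d}{dt}r_{\max}(t)=\frac{dr_i}{dt}(t)=-\wt{K}_i\sinh r_i(t)$, so it suffices to show $\wt{K}_i>0$. For each tetrahedron $\{ijkl\}\in\mathcal{T}_3$ incident to $i$ we have $r_i=\max\{r_i,r_j,r_k,r_l\}$. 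If $\tau(\br)$ is real, then $r_i\ge C_2$ forces $\wt{\alpha}_{ijkl}=\alpha_{ijkl}\le\epsilon_0$ by Lemma~\ref{lem:smallsolid}. If $\tau(\br)$ is virtual, then by Proposition~\ref{decomp} and Corollary~\ref{lemma-ri-small-imply-Di} the packing lies in some virtual space $D_m$ with $r_m$ the strict minimum of $\{r_i,r_j,r_k,r_l\}$ (Proposition~\ref{lemma-Di-imply-ri-small}); since $r_i$ is maximal, $m\ne i$, hence $\wt{\alpha}_{ijkl}=0$ by the definition \eqref{eq:extsolid}. Summing over the $d_i\le d_{\max}$ incident tetrahedra gives $\sum_{\{ijkl\}}\wt{\alpha}_{ijkl}\le d_{\max}\,\epsilon_0<4\pi$, so $\wt{K}_i>0$ and $\frac{dr_i}{dt}<0$, proving the claim.

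Applying Lemma~\ref{lem:calc} — whose proof works verbatim on the interval $[0,T)$ — with $f=r_{\max}$ and $C=C_2$ then yields $r_{\max}(t)\le\max\{r_{\max}(0),C_2\}=:C_3$ for all $t\in[0,T)$, and $C_3$ depends only on $\br(0)$ and $\mathcal{T}$, as claimed. The steps requiring care are: the elementary fact that a maximum of finitely many differentiable functions is, wherever it is differentiable, differentiable with derivative equal to that of any maximizing component; and the virtual-tetrahedron bookkeeping, where the decomposition $\R^4_+\setminus\Omega_{1234}=D_1\cup D_2\cup D_3\cup D_4$ and the inequality $r_m<\min\{\cdots\}$ on $D_m$ are exactly what force $\wt{\alpha}_{ijkl}=0$ at a maximal vertex. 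I expect no serious obstacle: the genuinely hard estimate, that a solid angle collapses to zero as a radius blows up, has already been dispatched in Lemma~\ref{lem:smallsolid}, and the remainder is a standard ODE barrier argument.
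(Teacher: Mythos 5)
Your argument is correct and follows essentially the same route as the paper's proof: a barrier/maximum-principle argument on $r_{\max}(t)$, using Lemma~\ref{lem:smallsolid} to make the solid angles at a large maximal vertex small in the real case (the paper takes $\epsilon_0=2\pi/\max_m d_m$ instead of $\pi/d_{\max}$, which is immaterial), the definition \eqref{eq:extsolid} together with Corollary~\ref{lemma-ri-small-imply-Di} to kill the virtual case, and Lemma~\ref{lem:calc} to conclude. Your remark that Lemma~\ref{lem:calc} applies verbatim on $[0,T)$ is a fair (and correct) point of care that the paper glosses over.
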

\begin{proof} Set $f(t):=\max_{m\in \mathcal{T}_0} r_m(t).$ Then $f(t)$ is a locally Lipschitz function and for a.e. $t\in (0,\infty)$, there exists $i\in \mathcal{T}_0$ depending on $t,$ such that
\begin{equation}\label{eq:eq4-1}f(t)=r_i(t),\quad \mathrm{and}\quad f'(t)=r_i'(t).\end{equation} Let $C_2$ be the constant determined in Lemma~\ref{lem:smallsolid} such that for any real tetrahedron $\tau_{v_iv_jv_kv_l},$ if $r_i\geq C_2,$ then
\begin{equation}\label{eq:eq4-2}\alpha_{ijkl}\leq \frac{2\pi}{\max_{m\in\mathcal{T}_0}d_m}.\end{equation}

We would like to show that \begin{equation}\label{eq:eq4-3}f'(t)\leq 0,\quad \mathrm{for\ a.e.}\ t\in \{f>C_2\}.\end{equation} Let $t\in (0,\infty)$ and $i\in \mathcal{T}_0$ satisfying \eqref{eq:eq4-1}. Suppose that $t\in \{f>C_2\}.$ We claim that for any tetrahedron $\{ijkl\}$ incident to $i$ with the ball packing $\br(t),$
$$\wt{\alpha}_{ijkl}(\br(t))\leq \frac{2\pi}{\max_{m\in\mathcal{T}_0}d_m}.$$ If the tetrahedron $\{ijkl\}$ with the ball packing $\br(t)$ is real, then it follows from \eqref{eq:eq4-2}. If the tetrahedron $\{ijkl\}$ with the ball packing $\br(t)$ is virtual, then by \eqref{eq:eq4-1}, $$r_i(t)=\max\{r_i(t),r_j(t),r_k(t),r_l(t)\}.$$ This yields that $\wt{\alpha}_{ijkl}(\br(t))=0$ by the definition of $\wt{\alpha}.$ This proves the claim.
Hence by the claim
$$\wt{K}_i(\br(t))=4\pi-\sum_{\{ijkl\}\in\mathcal{T}_3}\wt{\alpha}_{ijkl}(\br(t))>2\pi.$$
Hence by \eqref{extflow},
$$f'(t)=r_i'(t)=-\wt{K}_i \sinh r_i<0.$$ This yields \eqref{eq:eq4-3}. Then the theorem follows from Lemma~\ref{lem:calc}.

 \end{proof}

By the above a priori estimate of the solutions, we can prove Theorem~\ref{thm-yang-write}.
\begin{proof}[Proof of Theorem~\ref{thm-yang-write}]
Since the terms on the right hand side of \eqref{extflow}, $-\wt{K}_i\sinh r_i,$ are continuous functions on $\mathds{R}^N_+$, by Peano's existence theorem in classical ODE theory, the extended flow \eqref{extflow} has at least one solution $\br(t)$ on some interval $[0,\varepsilon),$ for small $\epsilon>0.$ We denote by $[0,T)$ the maximal existence interval of the solution $\br(t)$ with the initial data $\br(0).$ By the equation \eqref{extflow},
$$\frac{d}{dt}\left(\ln(\tanh(\frac{r_i(t)}{2}))\right)=-\wt{K}_i.$$
Note that by the definition of $\widetilde{K}_i$, for any vertex $i,$ $$|\widetilde{K}_i|\leq 2\pi\left(\max_{i\in\mathcal{T}_0}d_i+1\right)=:C.$$  Hence $$\tanh\left(\frac{r_i(0)}{2}\right)e^{-Ct}\leq \tanh\left(\frac{r_i(t)}{2}\right)\leq \tanh\left(\frac{r_i(0)}{2}\right)e^{Ct}.$$
which implies that $r_i(t)$ can not go to $0$ in finite time by the lower bound estimate. However the above upper bound estimate is not useful, since $\tanh(x)\leq 1,$ for any $x>0.$ That is the reason why we need a priori upper bound estimate of the solutions in Theorem~\ref{thm:upperest}. By Theorem~\ref{thm:upperest}, there is some constant $C_3$ such that
$$r_i(t)\leq C_3,\quad \forall\ i\in \mathcal{T}_0,\ t\in[0,T).$$ Hence, by the extension theorem of solutions in ODE theory, the solution exists for all $t\geq 0,$ i.e. $T=\infty.$
\end{proof}

\subsection{Uniqueness of the extended flow}
We introduce the following change of variables: for any $1\leq i\leq N,$
$$w_i(r_i)=\int_0^{r_i}\frac{1}{\sqrt{\sinh s}}ds.$$ Note that $w_i(r_i)$ is increasing in $r_i.$ This gives us a diffeomorphism
\begin{eqnarray*}\pmb{w}(\br): &&\R^N_+\to (0,c_0)^N,\\ &&\br\mapsto \pmb{w}(\br):=(w_1(r_1),\cdots, w_N(r_N)),\end{eqnarray*} where $c_0=\int_0^{\infty}\frac{1}{\sqrt{\sinh s}}ds.$ The inverse map of $\pmb{w}(\br)$ is denoted by $\br(\pmb{w}).$
Note that the extended combinatorial Yamabe flow \eqref{extflow} can be written as
\begin{equation}\label{eq:gra1-1}r_i'=-\nabla_{r_i}\wt{\mathcal{S}} \sinh r_i.\end{equation}
We introduce a new functional on $(0,c_0)^N$
\begin{equation}\label{eq:hatS}\hat{\mathcal{S}}(\pmb{w}):=\wt{\mathcal{S}}(\br(\pmb{w})).\end{equation} 
Hence the equation \eqref{eq:gra1-1} is equivalent to the one in the $\pmb{w}$-coordinate
$$w_i'=-\nabla_{w_i}\hat{\mathcal{S}}.$$ This means that the extended combinatorial Yamabe flow \eqref{extflow} is equivalent to a negative gradient flow of the functional $\hat{\mathcal{S}}$ in the $\pmb{w}$-coordinate.
A function $f$ on a convex subset $W$ of $\R^n$ is called $\kappa$-convex, for some $\kappa\in \R$, if the function
$f(\pmb{x})-\frac{1}{2}\kappa|\pmb{x}|^2$ is convex on $W.$ A function $f$ on an open subset $V$ of $\R^n$ is called semi-convex, if for any point $\pmb{x}$ in $V$ there is a convex neighborhood $W(\pmb{x})$ such that $f$ is $\kappa(\pmb{x})$-convex on it. We will prove the uniqueness of the solutions to the extended flow \eqref{extflow} by the uniqueness of the negative gradient flow of some $C^1$ semi-convex functional on a subset of $\R^N.$



\begin{theorem}[Uniqueness]\label{thm:uniqueness} Given any initial data, the solution to the extended combinatorial Yamabe flow \eqref{extflow} is unique.
\end{theorem}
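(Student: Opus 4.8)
The plan is to exploit the observation made around \eqref{eq:hatS}: in the $\pmb{w}$-coordinates the extended flow \eqref{extflow} is exactly the negative gradient flow $\pmb{w}'=-\nabla\hat{\mathcal{S}}(\pmb{w})$ of $\hat{\mathcal{S}}$ on the convex open box $(0,c_0)^N$, and a $C^1$ \emph{semi-convex} function has a unique negative gradient flow. So the proof will have two parts: first establish that $\hat{\mathcal{S}}$ is $C^1$ and semi-convex, and then run the usual Gr\"onwall comparison of two solutions.

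For the first part, recall from Theorem~\ref{thm:extendedf} that $\wt{\mathcal{S}}$ is $C^1$-smooth and convex on $\R^N_+$ with $\nabla\wt{\mathcal{S}}=\wt{\pmb K}$; since $\pmb{w}\mapsto\br(\pmb{w})$ is a $C^\infty$ diffeomorphism, $\hat{\mathcal{S}}=\wt{\mathcal{S}}\circ\br$ is $C^1$ on $(0,c_0)^N$. From $w_i=\int_0^{r_i}(\sinh s)^{-1/2}\,ds$ one gets $r_i'(w_i)=\sqrt{\sinh r_i}$ and $r_i''(w_i)=\tfrac12\cosh r_i$. I would fix a point of $(0,c_0)^N$ and a compact convex box $W$ around it inside $(0,c_0)^N$; on $W$ the quantities $r_i$, and hence $|\wt{K}_i(\br(\pmb{w}))|$ (also bounded globally by $2\pi(\max_i d_i+1)$, as in the proof of Theorem~\ref{thm-yang-write}) and $r_i''(w_i)=\tfrac12\cosh r_i$, are all bounded, so $\kappa_0:=\max_i\sup_{W}\big(|\wt{K}_i|\,\tfrac12\cosh r_i\big)<\infty$. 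For $\pmb{w}_0,\pmb{w}_1\in W$, convexity of $\wt{\mathcal{S}}$ gives
\[
\wt{\mathcal{S}}(\br(\pmb{w}_1))\ \geq\ \wt{\mathcal{S}}(\br(\pmb{w}_0))+\big\langle\wt{\pmb K}(\br(\pmb{w}_0)),\ \br(\pmb{w}_1)-\br(\pmb{w}_0)\big\rangle ,
\]
and writing each coordinate via Taylor's formula, $r_i(w_{1,i})-r_i(w_{0,i})=r_i'(w_{0,i})(w_{1,i}-w_{0,i})+\tfrac12 r_i''(\xi_i)(w_{1,i}-w_{0,i})^2$, the inner product becomes $\langle\nabla\hat{\mathcal{S}}(\pmb{w}_0),\pmb{w}_1-\pmb{w}_0\rangle+\tfrac12\sum_i\wt{K}_i(\br(\pmb{w}_0))\,r_i''(\xi_i)(w_{1,i}-w_{0,i})^2$, whose last term is $\geq-\tfrac{\kappa_0}{2}|\pmb{w}_1-\pmb{w}_0|^2$. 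Hence
\[
\hat{\mathcal{S}}(\pmb{w}_1)\ \geq\ \hat{\mathcal{S}}(\pmb{w}_0)+\langle\nabla\hat{\mathcal{S}}(\pmb{w}_0),\pmb{w}_1-\pmb{w}_0\rangle-\tfrac{\kappa_0}{2}|\pmb{w}_1-\pmb{w}_0|^2 ,
\]
which is the first-order characterization of $(-\kappa_0)$-convexity on the convex set $W$; thus $\hat{\mathcal{S}}$ is $(-\kappa_0)$-convex on $W$, and therefore semi-convex on $(0,c_0)^N$.

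For the second part, suppose $\pmb{w}(t)$ and $\pmb{w}^{*}(t)$ are two solutions of $\pmb{w}'=-\nabla\hat{\mathcal{S}}(\pmb{w})$ with $\pmb{w}(0)=\pmb{w}^{*}(0)$; by Theorem~\ref{thm-yang-write} and its a priori estimates both are $C^1$ on $[0,\infty)$ and stay in compact subsets of $(0,c_0)^N$. Let $I\subset[0,\infty)$ be the closed, nonempty set where $\pmb{w}=\pmb{w}^{*}$. Given $t_0\in I$, I would pick a compact convex box $W$ around $\pmb{w}(t_0)$ on which $\hat{\mathcal{S}}$ is $\kappa$-convex and $\delta>0$ with $\pmb{w}(t),\pmb{w}^{*}(t)\in W$ for $t\in[t_0,t_0+\delta)$; using that $\kappa$-convexity is equivalent to $\langle\nabla\hat{\mathcal{S}}(\pmb{x})-\nabla\hat{\mathcal{S}}(\pmb{y}),\pmb{x}-\pmb{y}\rangle\geq\kappa|\pmb{x}-\pmb{y}|^2$, the function $D(t):=|\pmb{w}(t)-\pmb{w}^{*}(t)|^2$ satisfies
\[
D'(t)=-2\big\langle\nabla\hat{\mathcal{S}}(\pmb{w}(t))-\nabla\hat{\mathcal{S}}(\pmb{w}^{*}(t)),\ \pmb{w}(t)-\pmb{w}^{*}(t)\big\rangle\ \leq\ -2\kappa\,D(t)
\]
on $[t_0,t_0+\delta)$, so $D(t)\leq e^{-2\kappa(t-t_0)}D(t_0)=0$ there; thus $[t_0,t_0+\delta)\subset I$. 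Since $I$ is closed, contains $0$, and every point of $I$ has such a right neighbourhood inside $I$, we get $I=[0,\infty)$, i.e. uniqueness. Transporting back through $\pmb{w}\leftrightarrow\br$ and combining with Theorem~\ref{thm-yang-write}, each initial datum determines exactly one solution of \eqref{extflow}, defined for all $t\geq 0$.

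The hard part is the semi-convexity step: $\wt{\mathcal{S}}$ is only $C^1$, not $C^2$, and the change of variables $\br(\pmb{w})$ introduces a second-order term $\wt{K}_i\,r_i''(w_i)$ of indefinite sign that blows up as $w_i\to c_0$, so one cannot hope for global convexity of $\hat{\mathcal{S}}$ and must localize, leaning on the boundedness of $\wt{\pmb K}$ and the explicit identity $r_i''(w_i)=\tfrac12\cosh r_i$ to obtain a uniform modulus of semi-convexity on each compact box.
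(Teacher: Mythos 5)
Your proposal is correct, and its overall skeleton coincides with the paper's: the change of variables $w_i=\int_0^{r_i}(\sinh s)^{-1/2}ds$ turning \eqref{extflow} into the negative gradient flow of $\hat{\mathcal{S}}$, a local $(-\lambda)$-convexity estimate for $\hat{\mathcal{S}}$ on a compact convex set containing the trajectories, and a Gr\"onwall comparison of $|\pmb{w}_1(t)-\pmb{w}_2(t)|^2$. Where you genuinely diverge is in how the semi-convexity is proved. The paper mollifies: it sets $\wt{\mathcal{S}}_\epsilon=\wt{\mathcal{S}}*\varphi_\epsilon$, which is smooth and convex, computes $\nabla^2_{w_iw_j}\hat{\mathcal{S}}_\epsilon=\frac{\partial^2\wt{\mathcal{S}}_\epsilon}{\partial r_i\partial r_j}\frac{\partial r_i}{\partial w_i}\frac{\partial r_j}{\partial w_j}+\frac12(\wt{K}_i*\varphi_\epsilon)\cosh r_i\,\delta_{ij}$, bounds the diagonal correction by $\lambda$ using $|\wt{K}_i|\leq 2\pi(\max_m d_m+1)$, concludes $\nabla^2\hat{\mathcal{S}}_\epsilon\geq-\lambda I$ on $W$, and passes to the limit $\epsilon\to0$ in the resulting gradient-monotonicity inequality. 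You instead work directly with the $C^1$ convex $\wt{\mathcal{S}}$ via its first-order subgradient inequality and Taylor-expand the coordinate change $r_i(w_i)$ to second order, using $r_i''(w_i)=\frac12\cosh r_i$ to absorb the error into $-\frac{\kappa_0}{2}|\pmb{w}_1-\pmb{w}_0|^2$; symmetrizing in $\pmb{w}_0,\pmb{w}_1$ then gives exactly the paper's monotonicity inequality. Your route is more elementary --- it never needs $\wt{\mathcal{S}}$ to be twice differentiable, even approximately, and avoids the $C^1$-convergence of mollifiers --- at the cost of hiding the Hessian structure that the paper's computation makes explicit; both isolate the same obstruction, namely the indefinite term $\wt{K}_i\cdot\frac12\cosh r_i$ produced by the nonlinear change of variables, which forces localization to compacta. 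The remaining difference (your closed-and-right-open continuation argument to get $I=[0,\infty)$ versus the paper's single estimate on $[0,1]$) is cosmetic.
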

\begin{proof} It suffices to show that for any $\br(0)\in \R^N_+,$ any two solutions $\br_1(t)$ and $\br_2(t)$ to \eqref{extflow} with same initial data $\br(0)$ satisfy
\begin{equation}\label{eq:unieq1}\br_1(t)=\br_2(t),\quad \forall\ t\in[0,1].\end{equation} For $i=1,2,$ let $\pmb{w}_i(t):=\pmb{w}(\br_i(t))$ be the transformed solutions in the $\pmb{w}$-coordinate. Note that by Theorem~\ref{thm-yang-write}, $\{\br_1(t):t\in[0,1]\}$ and $\{\br_2(t):t\in[0,1]\}$ lie in a compact subset in $\R^N_+,$ and hence $\{\pmb{w}_1(t):t\in[0,1]\}$ and $\{\pmb{w}_2(t):t\in[0,1]\}$ lie in a compact convex subset $W$ in $(0,c_0)^N.$

We claim that there is a finite positive constant $\lambda$ such that for any $\pmb{w}_1,\pmb{w}_2\in W,$
$$(\nabla_{\pmb{w}} \hat{\mathcal{S}}(\pmb{w}_1)-\nabla_{\pmb{w}} \hat{\mathcal{S}}(\pmb{w}_2))\cdot(\pmb{w}_1-\pmb{w}_2)+\lambda|\pmb{w}_1-\pmb{w}_2|^2\geq 0,$$ where $\hat{\mathcal{S}}$ is defined in \eqref{eq:hatS}. That is, $\hat{\mathcal{S}}$ is $(-\lambda)$-convex on $W.$ For any $\epsilon>0,$ we define the $\epsilon$-mollifier of $\wt{\mathcal{S}}$ as
$$\wt{\mathcal{S}}_{\epsilon}:=\wt{\mathcal{S}}*\varphi_{\epsilon},\quad \mathrm{on}\ (\epsilon,\infty)^N,$$ where $*$ denotes the convolution, $\varphi_{\epsilon}(\br):=\frac{1}{\epsilon^N}\varphi(\frac{\br}{\epsilon})$ is the standard mollifier with
\[\varphi(\br):=\left\{\begin{array}{ll} Ce^{-\frac{1}{1-|\br|^2}},& |\br|<1,\\
0,&|\br|\geq1,\end{array}\right.\] and $C$ is chosen such that $\int \varphi=1.$
 Suppose that $\epsilon$ is sufficiently small such that $$\br(W)\subset (\epsilon,\infty)^N.$$ Since $\wt{\mathcal{S}}$ is a $C^1$-smooth convex functional, $\wt{\mathcal{S}}_\epsilon$ is $C^\infty$-smooth convex on $(\epsilon,\infty)^N,$ and
$$\wt{\mathcal{S}}_\epsilon\to \wt{\mathcal{S}} \quad \mathrm{in}\ C^1 \ \mathrm{on}\ \R^N_+, \quad\epsilon\to 0.$$ Moreover,
$$\nabla \wt{\mathcal{S}}_\epsilon=\nabla\wt{\mathcal{S}}*\varphi_\epsilon=\wt{K}*\varphi_\epsilon,\quad \mathrm{on}\ (\epsilon,\infty)^N.$$ Set $$ \hat{\mathcal{S}}_\epsilon(\pmb{w}):= \wt{\mathcal{S}}_\epsilon(\br(\pmb{w})).$$ Then by the chain rule,
$$\nabla_{\pmb w}\hat{\mathcal{S}}_\epsilon=\nabla_\br\wt{\mathcal{S}}_{\epsilon}\frac{\partial \br}{\partial \pmb{w}},$$
\begin{eqnarray*}\nabla^2_{w_i w_j}\hat{\mathcal{S}}_{\epsilon}&=&\frac{\partial^2 \wt{\mathcal{S}}_{\epsilon}}{\partial r_i\partial r_j}\frac{\partial r_i}{\partial w_i}\frac{\partial r_j}{\partial w_j}+\frac{\partial \wt{\mathcal{S}}_{\epsilon}}{\partial r_i}\frac{\partial^2 r_i}{\partial w_i \partial w_j}\\
&=&\frac{\partial^2 \wt{\mathcal{S}}_{\epsilon}}{\partial r_i\partial r_j}\frac{\partial r_i}{\partial w_i}\frac{\partial r_j}{\partial w_j}+\frac{1}{2}\wt{K}_i*\varphi_{\epsilon}\cosh r_i \delta_{ij},\end{eqnarray*} where $\delta_{ij}=1$ if $i=j,$ and $\delta_{ij}=0$ otherwise.
Note that on the compact subset $W,$ there is a constant $\lambda$ such that $$\frac{1}{2}|\wt{K}_i*\varphi_{\epsilon}\cosh r_i|\leq C\cosh r_i\leq \lambda.$$  Hence $$\nabla^2\hat{\mathcal{S}}_{\epsilon}\geq \frac{\partial \br}{\partial \pmb{w}}\nabla^2_\br\wt{\mathcal{S}}_{\epsilon}\frac{\partial \br}{\partial \pmb{w}}-\lambda I\geq -\lambda I,$$ where we have used the convexity of $\wt{\mathcal{S}}_\epsilon$ and
$I$ is the identity matrix. Hence $\hat{\mathcal{S}}_{\epsilon}$ is a smooth $(-\lambda)$-convex function on $W.$ Hence it is well-known that, for any $\pmb{w}_1,\pmb{w_2}\in W,$
$$(\nabla_{\pmb{w}} \hat{\mathcal{S}}_{\epsilon}(\pmb{w}_1)-\nabla_{\pmb{w}} \hat{\mathcal{S}}_{\epsilon}(\pmb{w}_2))\cdot(\pmb{w}_1-\pmb{w}_2)+\lambda|\pmb{w}_1-\pmb{w}_2|^2\geq 0.$$ By passing to the limit, $\epsilon\to 0,$ the claim follows.

Consider the function $h(t)=|\pmb{w}_1(t)-\pmb{w}_2(t)|^2.$
Then \begin{eqnarray*}h'(t)&=&-2(\pmb{w}_1(t)-\pmb{w}_2(t))\cdot (\nabla_{\pmb{w}}\hat{\mathcal{S}}(\pmb{w}_1(t))-\nabla_{\pmb{w}}\hat{\mathcal{S}}(\pmb{w}_2(t)))\\
&\leq& 2\lambda |\pmb{w}_1(t)-\pmb{w}_2(t)|^2=2\lambda h(t).\end{eqnarray*}
Hence $h(t)\leq h(0) e^{2\lambda t},$ for all $t\in [0,1].$ By $h(0)=0,$ $h(t)\equiv0$ on $[0,1].$ This yields \eqref{eq:unieq1} and the theorem follows.

\end{proof}

As a corollary, we prove that  the solution of extended combinatorial Yamabe flow \eqref{extflow} extends the solution to combinatorial Yamabe flow \eqref{hyperyflow}.
\begin{corollary} For an initial data $\br(0)\in \MT,$ we denote by $\br(t)$ ($\wt{\br}(t)$ resp.) be the solution to combinatorial Yamabe flow \eqref{hyperyflow} (extended combinatorial Yamabe flow \eqref{extflow} resp.). Then
$$\br(t)=\wt{\br}(t),\quad \forall\ t\in [0,T),$$ where $T$ is the maximal existence time of $\br(t).$
\end{corollary}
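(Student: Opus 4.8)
The goal is to show that for an initial ball packing $\br(0)\in\MT$, the solution $\br(t)$ to the combinatorial Yamabe flow \eqref{hyperyflow} and the solution $\wt{\br}(t)$ to the extended flow \eqref{extflow} agree on $[0,T)$, where $T$ is the maximal existence time of $\br(t)$. The key observation is that on $\MT$ the extended curvature $\wt{\pmb K}$ coincides with $\pmb K$, so the two flows have the same right-hand side as long as the trajectory stays inside $\MT$. Hence $\br(t)$, viewed as a curve in $\R^N_+$, is itself a solution to the extended flow \eqref{extflow} on $[0,T)$: indeed for $t\in[0,T)$ we have $\br(t)\in\MT$, so $\wt{\alpha}_{ijkl}(\br(t))=\alpha_{ijkl}(\br(t))$ for every tetrahedron, giving $\wt{K}_i(\br(t))=K_i(\br(t))$ and therefore $\frac{d}{dt}r_i(t)=-K_i\sinh r_i=-\wt{K}_i\sinh r_i$.

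\textbf{Key steps.} First I would record that $\br(t)$ restricted to $[0,T)$ satisfies the extended flow equation, as above. Second, I would invoke Theorem~\ref{thm-yang-write} to note that $\wt{\br}(t)$ exists on all of $[0,\infty)$, in particular on $[0,T)$, with $\wt{\br}(0)=\br(0)$. Third, I would apply the uniqueness statement, Theorem~\ref{thm:uniqueness}: both $\br(t)$ and $\wt{\br}(t)$ are solutions to the extended combinatorial Yamabe flow \eqref{extflow} on $[0,T)$ with the same initial data $\br(0)$, hence they coincide on $[0,T)$. (Strictly, Theorem~\ref{thm:uniqueness} is phrased for solutions defined on $[0,1]$, but its proof via the Gr\"onwall-type estimate $h(t)\le h(0)e^{2\lambda t}$ works verbatim on any compact subinterval of $[0,T)$, since $\{\br(s):s\in[0,t]\}$ and $\{\wt{\br}(s):s\in[0,t]\}$ lie in a compact subset of $\R^N_+$ for each $t<T$; so uniqueness propagates to all of $[0,T)$.)

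\textbf{Main obstacle.} There is essentially no deep obstacle; the only point requiring a small amount of care is the domain bookkeeping. One must make sure that $\br(t)$ genuinely lies in $\MT$ — not merely in $\R^N_+$ — throughout $[0,T)$, which is exactly Proposition~\ref{prop:combflow}, and one must check that the uniqueness argument of Theorem~\ref{thm:uniqueness}, which was stated on $[0,1]$, applies on arbitrary compact subintervals of $[0,T)$; this is immediate because that argument only uses local Lipschitz/semiconvexity bounds on a compact set, and $T$ could a priori be smaller or larger than $1$. Once these two points are noted, the corollary follows directly from Theorem~\ref{thm-yang-write} and Theorem~\ref{thm:uniqueness}.
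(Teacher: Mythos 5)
Your proposal is correct and follows essentially the same route as the paper: observe that $\wt{\pmb K}=\pmb K$ on $\MT$, so $\br(t)$ is also a solution of the extended flow, and then invoke the uniqueness theorem for the extended flow together with Proposition~\ref{prop:combflow}. Your extra remark about extending the uniqueness argument from $[0,1]$ to arbitrary compact subintervals of $[0,T)$ is a reasonable bit of bookkeeping that the paper leaves implicit.
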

\begin{proof} Note that for any $\br\in \MT,$ $\wt{\pmb{K}}(\br)=\pmb{K}(\br).$ Hence, the corollary follows from the uniqueness of the solutions to the above flows, see Proposition~\ref{prop:combflow} and Theorem~\ref{thm:uniqueness}.
\end{proof}



\section{Proofs of main theorems}
\subsection{Triangulations with tetra-degrees at least 23}
In this subsection, we consider the triangulations of 3-manifolds whose tetra-degree at each vertex is at least 23. We prove lower bound estimates for the solutions to extended combinatorial Yamabe flow \eqref{extflow}.
\begin{theorem}\label{thm:lowerest} Let $(M,\mathcal{T})$ be a triangulated 3-manifold satisfying
$$d_i\geq 23,\quad \forall i\in \mathcal{T}_0.$$ Let $\br(t),$ $t\in[0,\infty)$, be a solution to the extended flow \eqref{extflow}. Then there exists a constant $C>0,$ depending on the initial data $\br(0),$ such that
$$r_i(t)\geq C,\quad \forall\ i\in \mathcal{T}_0, t\in [0,\infty).$$\end{theorem}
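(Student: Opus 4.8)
The plan is to mirror the upper-bound argument of Theorem~\ref{thm:upperest}, now controlling the \emph{minimum} radius. Set $g(t):=\min_{m\in\mathcal{T}_0}r_m(t)$, which is a locally Lipschitz function, and for a.e.\ $t$ there is a vertex $i=i(t)$ with $g(t)=r_i(t)$ and $g'(t)=r_i'(t)$. The aim is to produce a universal constant $C_0>0$ such that $g'(t)\geq 0$ for a.e.\ $t\in\{g<C_0\}$, after which the second half of Lemma~\ref{lem:calc} immediately gives $g(t)\geq\min\{g(0),C_0\}=:C>0$, which is the assertion.

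To obtain the differential inequality, fix $t$ and the corresponding vertex $i$, so that $r_i(t)=\min_m r_m(t)$. For every tetrahedron $\{ijkl\}\in\mathcal{T}_3$ incident to $i$ we then have $r_i(t)=\min\{r_i,r_j,r_k,r_l\}$, so the comparison principle Theorem~\ref{thm:compare} applies and yields $\wt{\alpha}_{ijkl}(\br(t))\geq\alpha_i(r_i(t)\pmb{1})$, the solid angle at a vertex of the regular hyperbolic tetrahedron all of whose radii equal $r_i(t)$; this holds whether the tetrahedron is real or virtual, since Theorem~\ref{thm:compare} already absorbs the virtual case (where $\wt{\alpha}_i=2\pi$). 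Summing over the $d_i\geq 23$ tetrahedra incident to $i$ and using the symmetry $\alpha_i(s\pmb{1})=\alpha_1(s\pmb{1})$, we get
\[
\wt{K}_i(\br(t))=4\pi-\sum_{\{ijkl\}\in\mathcal{T}_3}\wt{\alpha}_{ijkl}(\br(t))\leq 4\pi-d_i\,\alpha_1(r_i(t)\pmb{1})\leq 4\pi-23\,\alpha_1(r_i(t)\pmb{1}).
\]

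The essential arithmetic input is that $23\,\alpha_1^{\EE}(\pmb{1})=23\big(3\arccos\tfrac{1}{3}-\pi\big)>4\pi$ --- this is precisely the threshold that fails at $d=22$, which is why the hypothesis reads $d_i\geq 23$. Since Proposition~\ref{prop:regulartetra} tells us that $s\mapsto\alpha_1(s\pmb{1})$ is decreasing with $\lim_{s\to 0}\alpha_1(s\pmb{1})=\alpha_1^{\EE}(\pmb{1})$, there is a constant $C_0>0$ such that $\alpha_1(s\pmb{1})>4\pi/23$ for all $s\in(0,C_0)$. Hence whenever $g(t)=r_i(t)<C_0$ we have $\wt{K}_i(\br(t))<0$, and the flow equation \eqref{extflow} gives $g'(t)=r_i'(t)=-\wt{K}_i(\br(t))\sinh r_i(t)>0$. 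This verifies the hypothesis of Lemma~\ref{lem:calc} with $C=C_0$, so $r_i(t)\geq g(t)\geq\min\{g(0),C_0\}$ for all $i$ and all $t$, with $C:=\min\{\min_m r_m(0),C_0\}$.

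I do not expect a genuine obstacle here: the comparison principle Theorem~\ref{thm:compare} already packages the delicate geometric work, including the behaviour of $\wt{\alpha}$ on virtual tetrahedra, and Lemma~\ref{lem:calc} is a clean maximum-principle-type tool. The one thing one must get right is the numerical inequality $23\big(3\arccos\tfrac{1}{3}-\pi\big)>4\pi$, on which the whole theorem hinges, together with the monotonicity of $\alpha_1$ in the radius (Proposition~\ref{prop:regulartetra}) that lets this strict inequality persist on an entire interval $(0,C_0)$ near $0$.
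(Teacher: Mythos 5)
Your proposal is correct and follows essentially the same route as the paper: the same function $g(t)=\min_m r_m(t)$, the same application of Theorem~\ref{thm:compare} and Proposition~\ref{prop:regulartetra} to force $\wt{K}_i\leq 0$ at a minimizing vertex once $g(t)$ is small (via the numerical fact $4\pi/\alpha_1^{\EE}(\pmb{1})\approx 22.80<23$), and the same conclusion through Lemma~\ref{lem:calc}. The only cosmetic difference is that the paper fixes the threshold by writing $\epsilon_0=\alpha_1^{\EE}(\pmb{1})-4\pi/23$ and choosing $C$ with $\alpha_1(s\pmb{1})\geq\alpha_1^{\EE}(\pmb{1})-\epsilon_0$ for $s\leq C$, which is exactly your $C_0$.
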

\begin{proof} Set $g(t):=\min_{m\in \mathcal{T}_0} r_m(t).$ Then $g(t)$ is a locally Lipschitz function and for a.e. $t\in (0,\infty)$, there exists $i\in \mathcal{T}_0$ depending on $t,$ such that
\begin{equation}\label{eq:eq5-1}g(t)=r_i(t),\quad \mathrm{and}\quad g'(t)=r_i'(t).\end{equation}
Let $\alpha_1^{\EE}(\pmb{1})=3\arccos{1/3}-\pi.$ Set $\epsilon_0:=\alpha_1^{\EE}(\pmb{1})-\frac{4\pi}{23}.$ We know that $\epsilon_0>0$ since $\frac{4\pi}{\alpha_1^{\EE}(\pmb{1})}\approx 22.80.$ Note that by Proposition~\ref{prop:regulartetra}, there exists a constant $C>0$ such that for any $s\leq C,$
$$\alpha_1(s\pmb{1})\geq \alpha_1^{\EE}(\pmb{1})-\epsilon_0.$$

We claim that \begin{equation*}\label{eq:eq5-2}g'(t)\geq 0,\quad \mathrm{for\ a.e.}\ t\in \{g<C\}.\end{equation*}
Let $t\in (0,\infty)$ and $i\in \mathcal{T}_0$ satisfying \eqref{eq:eq5-1}, and $t\in \{g<C\}.$ Then for any tetrahedron $\{ijkl\}$ incident to $i$ with the ball packing $\br(t),$ $$r_i(t)=\min\{r_i(t),r_j(t),r_k(t),r_l(t)\}<C.$$ By Theorem~\ref{thm:compare},
$$\wt{\alpha}_{ijkl}(\br(t))\geq \alpha_1(r_i(t)\pmb{1})\geq  \alpha_1^{\EE}(\pmb{1})-\epsilon_0.$$ Since $d_i\geq 23,$
$$\wt{K}_i(\br(t))=4\pi-\sum_{\{ijkl\}\in\mathcal{T}_3}\wt{\alpha}_{ijkl}(\br(t))\leq 4\pi-23(\alpha_1^{\EE}(\pmb{1})-\epsilon_0)=0.$$
This yields that by \eqref{extflow},
$$g'(t)=r_i'(t)=-\wt{K}_i \sinh r_i\geq 0.$$ This proves the claim. The theorem follows from the claim and Lemma~\ref{lem:calc}.

\end{proof}

\begin{lemma}\label{Lemma-ODE-asymptotic-stable}
(\cite{P1}) Let $V\subset \mathds{R}^n$ be an open set, $f\in C^1(V,\mathds{R}^n)$. Consider an autonomous ODE system
\begin{equation}\label{eq:5-1-2}\frac{d}{dt}{\pmb{x}(t)}=\pmb{f}(\pmb{x}(t)),~~~\pmb{x}(t)\in V.\end{equation}
Assuming $\pmb{x}^*\in V$ is a critical point of $f$, i.e. $\pmb{f}(\pmb{x}^*)=0$. If all the eigenvalues of the Jacobian matrix $\frac{\partial \pmb{f}}{\partial\pmb{x}}(\pmb{x}^*)$ have negative real part, then $\pmb{x}^*$ is an asymptotically stable point. More specifically, there exists a neighbourhood $\wt{V}\subset V$ of $\pmb{x}^*$, such that for any initial $\pmb{x}(0)\in \wt{V}$, the solution $\pmb{x}(t)$ to the equation \eqref{eq:5-1-2} exists for all time $t\in[0,\infty)$ and converges exponentially fast to $\pmb{x}^*$.
\end{lemma}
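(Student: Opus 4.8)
The statement is the classical principle of linearized stability (Lyapunov's indirect method), and the plan is to run the standard Lyapunov-function argument. First I would translate coordinates so that $\pmb{x}^*=\pmb 0$, and split $\pmb f(\pmb x)=A\pmb x+\pmb g(\pmb x)$, where $A:=\frac{\partial \pmb f}{\partial \pmb x}(\pmb 0)$ and the remainder $\pmb g\in C^1$ satisfies $\pmb g(\pmb 0)=\pmb 0$ and $\frac{\partial \pmb g}{\partial \pmb x}(\pmb 0)=0$ (since $\pmb g(\pmb x)=\pmb f(\pmb x)-A\pmb x$). By continuity of $\frac{\partial \pmb g}{\partial \pmb x}$ and the mean value inequality this yields the key nonlinear estimate: for every $\varepsilon>0$ there is $\delta>0$, with $B_\delta(\pmb 0)\subset V$, such that $\|\pmb g(\pmb x)\|\le \varepsilon\|\pmb x\|$ whenever $\|\pmb x\|\le\delta$.

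Next I would exploit that $A$ is Hurwitz, so the Lyapunov equation $A^{T}P+PA=-I$ admits a symmetric positive-definite solution, explicitly $P=\int_0^{\infty}e^{sA^{T}}e^{sA}\,ds$ (the integral converges because $\|e^{sA}\|$ decays exponentially). Set $W(\pmb x):=\pmb x^{T}P\pmb x$, so that $c_1\|\pmb x\|^2\le W(\pmb x)\le c_2\|\pmb x\|^2$ for constants $0<c_1\le c_2$. Along any solution of \eqref{eq:5-1-2},
\[
\frac{d}{dt}W(\pmb x(t))=\pmb x^{T}(A^{T}P+PA)\pmb x+2\,\pmb g(\pmb x)^{T}P\pmb x=-\|\pmb x\|^2+2\,\pmb g(\pmb x)^{T}P\pmb x .
\]
Fixing $\varepsilon$ small enough that $2\|P\|\varepsilon\le\tfrac12$, together with the associated $\delta$, one gets $\frac{d}{dt}W(\pmb x(t))\le-\tfrac12\|\pmb x(t)\|^2\le-\tfrac1{2c_2}W(\pmb x(t))$ as long as $\|\pmb x(t)\|\le\delta$.

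Finally I would close with a continuation/bootstrap argument. Put $\wt V:=\{\pmb x:W(\pmb x)<c_1\delta^2\}$, an open neighbourhood of $\pmb 0$ contained in $B_\delta(\pmb 0)\subset V$. For $\pmb x(0)\in\wt V$, Picard--Lindel\"of gives a local solution; while it stays in $\wt V$ one has $\|\pmb x(t)\|\le\delta$, hence the differential inequality above holds and $W(\pmb x(t))\le W(\pmb x(0))e^{-t/(2c_2)}<c_1\delta^2$, so the trajectory cannot reach $\partial\wt V$ in finite time. Therefore it remains in the compact sublevel set $\{W\le W(\pmb x(0))\}\subset V$, and the standard escape lemma forces the maximal existence time to equal $+\infty$; moreover $\|\pmb x(t)\|^2\le\tfrac1{c_1}W(\pmb x(t))\le\tfrac{c_2}{c_1}\|\pmb x(0)\|^2e^{-t/(2c_2)}$, which gives exponential convergence to $\pmb 0$. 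Undoing the translation proves the lemma. The one genuinely delicate point is this bootstrap step: a priori Picard--Lindel\"of yields only local existence, so before invoking global existence one must rule out the solution escaping $\wt V$, and this is exactly what the forward invariance of the sublevel sets of $W$ provides; everything else is elementary linear algebra (solvability of the Lyapunov equation) plus a Gronwall-type inequality. As an alternative one could bypass $W$ entirely and argue directly from the variation-of-constants formula $\pmb x(t)=e^{tA}\pmb x(0)+\int_0^t e^{(t-s)A}\pmb g(\pmb x(s))\,ds$ together with a bound $\|e^{tA}\|\le Me^{-\alpha t}$ and Gronwall's inequality, reaching the same conclusion.
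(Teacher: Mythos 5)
Your proposal is correct and complete. Note, however, that the paper does not prove this lemma at all: it is quoted verbatim from Pontryagin's textbook \cite{P1} as a classical black box (linearized stability at a hyperbolic sink), so there is no internal argument to compare yours against. What you supply is the standard Lyapunov-equation proof: solve $A^{T}P+PA=-I$ using that $A$ is Hurwitz, verify that $W(\pmb x)=\pmb x^{T}P\pmb x$ is a strict Lyapunov function on a small ball via the $C^1$ remainder estimate $\|\pmb g(\pmb x)\|\le\varepsilon\|\pmb x\|$, and then use forward invariance of the sublevel sets of $W$ to bootstrap local existence into global existence with exponential decay. All the individual steps check out, including the one genuinely delicate point you flag yourself (one must prevent escape from $\wt V$ before invoking global existence, and the monotone decay of $W$ does exactly that, since the sublevel set $\{W\le W(\pmb x(0))\}$ is a compact ellipsoid inside $V$). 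The alternative you mention --- variation of constants $\pmb x(t)=e^{tA}\pmb x(0)+\int_0^te^{(t-s)A}\pmb g(\pmb x(s))\,ds$ with $\|e^{tA}\|\le Me^{-\alpha t}$ and Gronwall --- is essentially the proof found in Pontryagin and most ODE texts; it buys you the explicit decay rate $\alpha-M\varepsilon$ close to the spectral abscissa, whereas the Lyapunov route is cleaner to state and generalizes more readily. Either is an acceptable substitute for the citation.
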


Now we are ready to prove one of our main results, Theorem~\ref{thm:23degree}.
\begin{proof}[Proof of Theorem~\ref{thm:23degree}] For any initial data $\br(0),$ let $\br(t)$ be a solution to the extended combinatorial Yamabe flow \eqref{extflow}. By Theorem~\ref{thm:lowerest} and Theorem~\ref{thm:upperest}, there are constants $C_1$ and $C_2$ such that
\begin{equation}\label{eq:eq5-2}C_1\leq r_i(t)\leq C_2,\quad \forall i\in \mathcal{T}_0, t\in [0,\infty).\end{equation}

We consider the first assertion, i.e. the existence of ball packing with vanishing extended combinatorial curvature.
For any $t>0,$ by Theorem~\ref{thm:extendedf},
\begin{eqnarray*}\frac{d}{dt}(\wt{\mathcal{S}}(\br(t)))&=&\sum_{i} \wt{K}_i(\br(t))r_i'(t)=-\sum_i \wt{K}^2_i(\br(t))\sinh(r_i(t))\\
&\leq&0.
\end{eqnarray*}
Hence $\wt{\mathcal{S}}(\br(t))$ is non-increasing. By \eqref{eq:eq5-2}, $\wt{\mathcal{S}}(\br(t))$ is bounded from below, which yields that there is a finite constant $C$ such that $$\lim_{t\to\infty}\wt{\mathcal{S}}(\br(t))=C.$$ Consider the sequence $\{\wt{\mathcal{S}}(\br(n))\}_{n=1}^\infty.$ By the mean value theorem, for any $n\geq 1$ there exists $t_n\in(n,n+1)$ such that
\begin{eqnarray}\label{eq:eq51-1}\wt{\mathcal{S}}(\br(n+1))-\wt{\mathcal{S}}(\br(n))&=&\frac{d}{dt}\Big|_{t=t_n}(\wt{\mathcal{S}}(\br(t)))\nonumber\\
&=&-\sum_i \wt{K}^2_i(\br(t_n))\sinh(r_i(t_n)).
\end{eqnarray} Note that $$\lim_{n\to\infty}\wt{\mathcal{S}}(\br(n+1))-\wt{\mathcal{S}}(\br(n))=C-C=0.$$
Hence by \eqref{eq:eq51-1},
$$\lim_{n\to \infty} \wt{K}_i(\br(t_n))=0,\quad\forall \ i\in \mathcal{T}_0.$$
By \eqref{eq:eq5-2}, we may extract a subsequence of $\br(t_n),$ still denoted by $\br(t_n)$ for simplicity, such that
$$\br(t_n)\to \overline{\br},\quad n\to\infty.$$
By the continuity of $\wt{K}_i,$ we have
$$\wt{K}_i(\overline{\br})=0,\quad \forall  \ i\in \mathcal{T}_0.$$ This proves the first assertion.

For the second assertion, let $\br^*$ be a real ball packing with vanishing combinatorial scaler curvature. By Theorem~\ref{thm:alternative}, $\br^*$ is the unique ball packing with such a property. By the proof above, there is a sequence $t_n\to\infty,$ such that \begin{equation}\label{eq:eqpart1}\br(t_n)\to \br^*.\end{equation} For any $\br\in\R^N_+,$ set
$$\pmb{f}(\br):=-\wt{K}_i(\br)\sinh(r_i).$$ Then the extended combinatorial Yamabe flow \eqref{extflow} can be written as \begin{equation}\label{eq:5-1-1}\frac{d}{dt}\br(t)=\pmb{f}(\br(t)).\end{equation} Note that critical points of $\pmb{f},$ corresponding to ball packings with vanishing combinatorial scaler curvature, consists of a single point $\br^*.$ We calculate the Jacobian matrix of the map $\pmb{f}$ at $\br^*,$
$$\frac{\partial \pmb{f}}{\partial \br}(\br^*)=-\Sigma \left(\frac{\partial \pmb K}{\partial \br}(\br^*)\right),$$ where $\Sigma=\mathrm{diag}\{\sinh r_1^*,\cdots,\sinh r_N^*\}.$ Hence
$$-\Sigma \left(\frac{\partial \pmb K}{\partial \br}(\br^*)\right)=-\Sigma^{\frac{1}{2}}\Sigma^{\frac{1}{2}}\left(\frac{\partial \pmb K}{\partial \br}(\br^*)\right) \Sigma^{\frac{1}{2}}\Sigma^{-\frac{1}{2}},$$ which has the same spectrum as $$-\Sigma^{\frac{1}{2}}\left(\frac{\partial \pmb K}{\partial \br}(\br^*)\right) \Sigma^{\frac{1}{2}}.$$ Note that
$\frac{\partial \pmb K}{\partial \br}(\br^*)$ is positive definite by Lemma~\ref{lem:rivin}. Hence
$\frac{\partial \pmb{f}}{\partial \br}(\br^*)$ is negative definite. By Lemma~\ref{Lemma-ODE-asymptotic-stable},
$\br^*$ is an asymptotically stable point of the flow \eqref{eq:5-1-1}, which is equivalent to \eqref{extflow}. By \eqref{eq:eqpart1}, the extended combinatorial Yamabe flow \eqref{extflow} converges to $\br^*$ for any initial data $\br(0).$ This proves the second assertion. The theorem follows.
\end{proof}

\subsection{Triangulations with tetra-degrees at most 22}
In this subsection, we consider the triangulations of 3-manifolds whose tetra-degree at each vertex is at most 22. For such a triangulation and any ball packing, we prove that there is a vertex has positive extended combinatorial scaler curvature.
\begin{theorem}\label{thm:positivecurvature} Let $(M,\mathcal{T})$ be a triangulated 3-manifold such that $d_i\leq 22$ for all $i\in\mathcal{T}_0.$ Then for any ball packing $\br\in R^N_+,$ there is a vertex $i,$ such that
$$\wt{K}_i\geq \epsilon_0,$$ where $\epsilon_0:=4\pi-22\alpha_1^\EE(\pmb{1})>0.$
\end{theorem}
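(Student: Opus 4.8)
The plan is to locate the vertex of globally maximal radius and apply the solid-angle upper bound of Theorem~\ref{thm:secondcompare} at every tetrahedron incident to it. First I would choose $i\in\mathcal{T}_0$ with $r_i=\max_{m\in\mathcal{T}_0}r_m$, which exists because $\mathcal{T}_0$ is finite. The key observation is that this single choice simultaneously verifies the hypothesis of Theorem~\ref{thm:secondcompare} for \emph{all} tetrahedra incident to $i$: in any $\{ijkl\}\in\mathcal{T}_3$ containing $i$ one trivially has $r_i=\max\{r_i,r_j,r_k,r_l\}$ (ties are allowed in that statement), so no case distinction among the incident tetrahedra is needed.

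Next I would invoke Theorem~\ref{thm:secondcompare} tetrahedron by tetrahedron to get $\wt{\alpha}_{ijkl}(\br)\leq\alpha_1^\EE(\pmb{1})$ for every $\{ijkl\}\in\mathcal{T}_3$ incident to $i$, regardless of whether the tetrahedron is real or virtual --- in the virtual case Proposition~\ref{lemma-Di-imply-ri-small} together with Corollary~\ref{lemma-ri-small-imply-Di} force $\wt{\alpha}_{ijkl}(\br)=0$, which is again $\leq\alpha_1^\EE(\pmb{1})$. Summing the $d_i$ incident contributions and using $\alpha_1^\EE(\pmb{1})>0$ and $d_i\leq 22$ then yields
$$\wt{K}_i(\br)=4\pi-\sum_{\{ijkl\}\in\mathcal{T}_3}\wt{\alpha}_{ijkl}(\br)\geq 4\pi-d_i\,\alpha_1^\EE(\pmb{1})\geq 4\pi-22\,\alpha_1^\EE(\pmb{1})=\epsilon_0,$$
which is the assertion.

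It remains only to record that $\epsilon_0>0$. Since $\alpha_1^\EE(\pmb{1})=3\arccos(1/3)-\pi$, a direct numerical estimate gives $4\pi/\alpha_1^\EE(\pmb{1})\approx 22.80>22$, hence $4\pi-22\,\alpha_1^\EE(\pmb{1})>0$; this is the same elementary fact already used in the proof of Theorem~\ref{thm:lowerest}.

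I do not expect a real obstacle: the substance of the theorem has been absorbed into Theorem~\ref{thm:secondcompare}, whose proof (via the $C^1$-concavity of the extended functional $\wt{\mathcal{U}}$ and the monotonicity supplied by Lemma~\ref{lem:Glick1} and Proposition~\ref{prop:regulartetra}) is where the work lies. The one point requiring a little care is to apply the comparison at a vertex that is maximal \emph{over the whole triangulation}, not merely within a single tetrahedron; selecting $i$ as a global argmax of $\br$ disposes of this cleanly.
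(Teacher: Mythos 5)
Your proposal is correct and follows essentially the same argument as the paper: pick the vertex of globally maximal radius, apply Theorem~\ref{thm:secondcompare} to each incident tetrahedron to bound every extended solid angle by $\alpha_1^\EE(\pmb{1})$, and sum using $d_i\leq 22$. The additional remarks about the virtual case and the positivity of $\epsilon_0$ are consistent with what the paper already establishes.
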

\begin{proof} Let $i$ be the vertex such that $r_i=\max_{j\in \mathcal{T}_0}r_j.$ Then by Theorem~\ref{thm:secondcompare}, for any tetrahedron $\{ijkl\}$ incident to $i,$ $$\wt{\alpha}_{ijkl}\leq \alpha_1^\EE(\pmb{1}).$$ Since $d_i\leq 22,$
$$\wt{K}_i=4\pi-\sum_{\{ijkl\}\in \mathcal{T}_3}\wt{\alpha}_{ijkl}\geq 4\pi-22\alpha_1^\EE(\pmb{1}).$$ This proves the theorem.
\end{proof}

By the above result, we obtained a refined upper estimate for the solutions to the extended combinatorial Yamabe flow \eqref{extflow}.
\begin{theorem}\label{thm:estimate22} Let $(M,\mathcal{T})$ be a triangulated 3-manifold such that $d_i\leq 22$ for all $i\in\mathcal{T}_0.$ Let $\br(t)$ be a solution to the extended combinatorial Yamabe flow \eqref{extflow}. Set $r_M(t)=\max_{i\in\mathcal{T}_0}r_i(t).$ Then $r_M(t)$ is non-increasing. Moreover, for a.e. $t\in [0,\infty)$
$$r_M'(t)\leq -\epsilon_0 \sinh (r_M(t)), $$ where $\epsilon_0:=4\pi-22\alpha_1^\EE(\pmb{1})>0.$
\end{theorem}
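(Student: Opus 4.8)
The plan is to follow the same ``track the extremal radius'' strategy that succeeded for the upper estimate in Theorem~\ref{thm:upperest}, but now exploiting Theorem~\ref{thm:positivecurvature} to get a quantitative lower bound on the curvature at the largest vertex. First I would set $r_M(t):=\max_{i\in\mathcal{T}_0}r_i(t)$ and recall that, as the maximum of finitely many locally Lipschitz functions, $r_M$ is locally Lipschitz, hence differentiable almost everywhere; and for a.e.\ $t$ there is some vertex $i=i(t)$ attaining the maximum with $r_M(t)=r_i(t)$ and $r_M'(t)=r_i'(t)$. (This is exactly the observation already used in \eqref{eq:eq4-1}; the standard justification is that at a point of differentiability of $r_M$, its derivative equals $r_i'(t)$ for any index $i$ currently achieving the max.)

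Next, fix such a $t$ and the corresponding vertex $i$. Since $r_i(t)=\max_{j\in\mathcal{T}_0}r_j(t)$, Theorem~\ref{thm:positivecurvature}---or more directly its proof, which only uses that $i$ is a global maximizer---applies at the ball packing $\br(t)$ to give
$$\wt{K}_i(\br(t))=4\pi-\sum_{\{ijkl\}\in\mathcal{T}_3}\wt{\alpha}_{ijkl}(\br(t))\geq 4\pi-22\,\alpha_1^\EE(\pmb{1})=\epsilon_0>0,$$
where we used $d_i\leq 22$ and Theorem~\ref{thm:secondcompare}, which bounds each extended solid angle at the largest vertex by $\alpha_1^\EE(\pmb{1})$. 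Plugging this into the flow equation \eqref{extflow} at the vertex $i$ yields
$$r_M'(t)=r_i'(t)=-\wt{K}_i(\br(t))\sinh(r_i(t))\leq -\epsilon_0\sinh(r_M(t)),$$
which is the second, stronger assertion. In particular $r_M'(t)\leq 0$ for a.e.\ $t$, and since $r_M$ is locally Lipschitz it is the integral of its a.e.-derivative, so $r_M$ is non-increasing on $[0,\infty)$; this gives the first assertion.

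The only genuinely delicate point---and the one I would write out carefully---is the almost-everywhere differentiability claim together with the identity $r_M'(t)=r_i'(t)$ for an index $i$ achieving the maximum at $t$; this is a standard fact about pointwise maxima of finitely many $C^1$ (or Lipschitz) functions, and the paper has already invoked it in the proof of Theorem~\ref{thm:upperest}, so it can be cited there. Everything else is a direct substitution using results established earlier in the excerpt, so I expect no real obstacle beyond being careful that the extended curvature $\wt{K}_i$ is continuous along the (continuous) solution $\br(t)$, which is guaranteed by Theorem~\ref{thm:cont}.
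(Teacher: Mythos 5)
Your proposal is correct and follows essentially the same route as the paper: identify the maximizing vertex $i$ with $r_M(t)=r_i(t)$ and $r_M'(t)=r_i'(t)$ for a.e.\ $t$, invoke the proof of Theorem~\ref{thm:positivecurvature} (via Theorem~\ref{thm:secondcompare} and $d_i\leq 22$) to get $\wt{K}_i(\br(t))\geq\epsilon_0$, and substitute into \eqref{extflow}. Your additional remarks on the local Lipschitz property of the pointwise maximum and on deducing monotonicity by integrating the a.e.-derivative are exactly the justifications the paper leaves implicit.
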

\begin{proof} It suffices to prove the second assertion. It is well-known that for a.e. $t\in[0,\infty),$ there exists a vertex $i$ depending on $t$ such that
$$r_M(t)=r_i(t),\quad r_M'(t)=r_i'(t).$$ By the proof of Theorem~\ref{thm:positivecurvature},
$$\wt{K}_i(\br(t))\geq \epsilon_0.$$ Hence by \eqref{extflow},
$$r_M'(t)=r_i(t)\leq -\epsilon_0 \sinh (r_i(t))= -\epsilon_0 \sinh (r_M(t)).$$ This proves the theorem.
\end{proof}

Now we can prove Theorem~\ref{thm:less22}.
\begin{proof}[Proof of Theorem~\ref{thm:less22}]
The first assertion follows from Theorem~\ref{thm:positivecurvature}.

For the second assertion, let $\br(t)$ be a solution to the extended combinatorial Yamabe flow \eqref{extflow}.
By Theorem~\ref{thm:estimate22}, for a.e. $t\in [0,\infty),$
$$\left(\ln\tanh\frac{r_M(t)}{2}\right)'\leq -\epsilon_0,$$ where $r_M(t)=\max_{i\in\mathcal{T}_0}r_i(t).$  By integrating both sides from $0$ to $t,$
$$\tanh\frac{r_M(t)}{2}\leq  \tanh\frac{r_M(0)}{2}e^{-\epsilon_0 t}.$$ This proves the second assertion by passing to the limit, $t\to\infty.$ We prove the theorem.

\end{proof}

In the following, we prove Corollary~\ref{coro:regulartetra}.
\begin{proof}[Proof of Corollary~\ref{coro:regulartetra}] By Theorem~\ref{thm:23degree} and Theorem~\ref{thm:less22}, it suffices to prove that there is a real ball packing with vanishing combinatorial scaler curvature for $d\geq 23.$ For any $t>0,$ consider the ball packings $t\pmb{1}$ on the triangulation $\mathcal{T}.$  Since $\mathcal{T}$ is tetra-regular, for any vertex $i,$
$$K_i(t\pmb{1})=4\pi-d\cdot\alpha_1(t\pmb{1})=: g(t).$$ Note that by Proposition~\ref{prop:regulartetra} and $d\geq 23$,
$$\lim_{t\to 0}g(t)=4\pi-d\cdot\alpha_1^{\EE}(\pmb{1})<0,\ \mathrm{and}\ \lim_{t\to \infty}g(t)=4\pi.$$ By the strict monotonicity of $g(t),$ there is a unique $t_0\in(0,\infty)$ such that $K_i(t_0\pmb{1})=0$ for any vertex $i.$ Hence $t_0\pmb{1}$ is a real ball packing with vanishing combinatorial scaler curvature. By the rigidity result, Theorem~\ref{thm:alternative}, it is the unique ball packing with vanishing (extended) combinatorial scaler curvature. This proves the corollary.
\end{proof}

\section{Appendix}
We use the symbolic computations in Wolfram Mathematica~8. For any $\br=(r_1,r_2,r_3,r_4)\in \R^4_+,$ set $y_i=\coth r_i,$ for $1\leq i\leq 4,$ see Definition~\ref{def:rtoy}. For a tetrahedron $\tau=\{1234\}=\{ijkl\}$ with the ball packing $\br,$ we have the following results:
\begin{eqnarray*}
&&\frac{\partial \beta_{ij}}{\partial r_i}=\frac{\sinh^2 r_j\sinh r_k\sinh r_l}{2\sqrt{Q(\br)}\sinh (r_i+r_j+r_k)\sinh (r_i+r_j+r_l)}\times\\
&&\Bigg\{y_j^2 \Big[-y_k^2 - y_l^2 -
   2 \frac{y_i}{y_j} \Big(y_i y_k + y_i y_l + y_k y_l (2 + \frac{y_i}{y_j} )\Big) + (y_j - y_i) (2 y_i +
      y_k + y_l)\Big]\\
 &&-4 +2 y_j^2 + (y_k-y_l)^2 - 3 y_j (y_k + y_l) -
 3 y_i (2 y_j + y_k + y_l)\Bigg\},
\end{eqnarray*}

\begin{eqnarray*}
&&\frac{\partial \beta_{ij}}{\partial r_k}=\frac{\sinh (r_i+r_j)}{2\sqrt{Q(\br)}\sinh r_k\sinh (r_i+r_j+r_k)}[y_i+y_j+y_k-y_l],
\end{eqnarray*} and

\begin{eqnarray*}
&&\frac{\partial \alpha_{i}}{\partial r_i}=-\frac{\sinh r_i\sinh^2 r_j\sinh^2 r_k\sinh^2 r_l}{\sqrt{Q(\br)}\sinh (r_i+r_j+r_k)\sinh (r_i+r_j+r_l)\sinh (r_i+r_k+r_l)}\times\\
&&\Bigg\{y_i ^2 y_j y_k y_l \Big[2 y_i + y_j + y_k +
   y_l + \frac{y_i}{y_j} (y_i + y_k + y_l) + \frac{y_i}{y_k}(y_i + y_j + y_l) + \\&&\frac{y_i}{y_l} (y_i + y_j + y_k) + \Big(\frac{2}{y_i} + \frac{1}{y_j}+ \frac{1}{y_k} +
      \frac{1}{y_l}\Big) Q(\br)\Big]+6 - 2 y_k^2 + 6 y_k y_l\\&&-y_k^3 y_l - 2 y_l^2 + 2 y_k^2 y_l^2 - y_k y_l^3 -
 y_j^3 (y_k + y_l) + 4 y_i^2 (y_j + y_k + y_l)^2 +\\&&
 2 y_j^2 (-1 + y_k^2 + y_k y_l + y_l^2) -
 y_j (y_k^3 - 2 y_k^2 y_l + y_l (-6 + y_l^2) - 2 y_k (3 + y_l^2)) +\\&&
 y_i \Big(-2 y_j^3 + 10 y_k - 2 y_k^3 + 10 y_l + 3 y_k^2 y_l + 3 y_k y_l^2 -
    2 y_l^3 + 3 y_j^2 (y_k + y_l) + \\&&y_j (10 + 3 y_k^2 + 16 y_k y_l + 3 y_l^2)\Big)
 \Bigg\}.
\end{eqnarray*} Note that the terms in $[\cdots]$ also appear in the Euclidean setting by replacing $y_i$ by $\frac{1}{r_i},$ see \cite[Page~798]{G1}.

In particular, by evaluating at $\br=\pmb{1},$ we get
\begin{equation}\label{eq:eqa-1}\frac{\partial \pmb\alpha}{\partial \br}(\pmb{1})=c\left(\begin{matrix}
-3\cosh 2&1&1&1\\
1&-3\cosh 2&1&1\\
1&1&-3\cosh 2&1\\
1&1&1& -3\cosh 2
\end{matrix}\right),\end{equation} where $$c=\frac{2\sinh2}{(\cosh2-1)(2\cosh2+1)\sqrt{1+4\cosh2+3\cosh^22}}.$$ This yields that $\frac{\partial \pmb\alpha}{\partial \br}(\pmb{1})$ is negative definite.\\

\noindent
\textbf{Acknowledgements:} The first author is supported by the NSFC of China (No.11501027). Part of the work was done while the first author was visiting School of Mathematical Sciences, Fudan University in 2018. He would like to thank Professor Jiaxing Hong for many useful conversations during the visiting.


\bibliographystyle{spmpsci}
\bibliography{3dhyperbolic}

\bigskip

\bigskip
\end{document}